\newtheorem{assumption}{Assumption}
\newtheorem{lemma}{Lemma}
\newtheorem{proposition}{Proposition}
\newtheorem{theorem}{Theorem}
\newtheorem{corollary}{Corollary}
\newtheorem{definition}{Definition}
\newcommand{\N}{{\mathbb N}}
\newcommand{\Z}{{\mathbb Z}}
\newcommand{\R}{{\mathbb R}}
\newcommand{\C}{{\mathbb C}}
\newcommand{\E}{{\mathbb E}}
\newcommand{\M}{{\mathbb M}}
\newcommand{\D}{{\mathbb D}}
\newcommand{\Dbar}{\overline{\mathbb D}}
\newcommand{\U}{{\mathcal U}}
\newcommand{\Ubar}{\overline{\mathcal U}}
\newcommand{\cercle}{{\mathbb S}^1}
\newcommand{\G}{{\mathcal G}}
\newcommand{\T}{{\mathcal T}}
\newcommand{\md}{\mathrm{d}}
\newcommand{\bqs}{\begin{equation*}}
\newcommand{\eqs}{\end{equation*}}
\newcommand{\bqq}{\begin{equation}}
\newcommand{\eqq}{\end{equation}}
\renewcommand{\Re}{\mathrm{Re}}
\renewcommand{\Im}{\mathrm{Im}}
\begin{document}

\title{Sharp stability for finite difference approximations of \\
hyperbolic equations with boundary conditions}

\author{Jean-Fran\c{c}ois {\sc Coulombel} \& Gr\'egory {\sc Faye}\thanks{Institut de Math\'ematiques de Toulouse - UMR 5219, Universit\'e de 
Toulouse ; CNRS, Universit\'e Paul Sabatier, 118 route de Narbonne, 31062 Toulouse Cedex 9 , France. Research of J.-F. C. was supported 
by ANR project Nabuco, ANR-17-CE40-0025. G.F. acknowledges support from an ANITI (Artificial and Natural Intelligence Toulouse Institute) 
Research Chair and from Labex CIMI under grant agreement ANR-11-LABX-0040. Emails: {\tt jean-francois.coulombel@math.univ-toulouse.fr}, 
{\tt gregory.faye@math.univ-toulouse.fr}}}
\date{\today}
\maketitle

\begin{abstract}
In this article, we consider a class of finite rank perturbations of Toeplitz operators that have simple eigenvalues on the unit circle. Under a suitable 
assumption on the behavior of the essential spectrum, we show that such operators are power bounded. The problem originates in the approximation 
of hyperbolic partial differential equations with boundary conditions by means of finite difference schemes. Our result gives a positive answer to a 
conjecture by Trefethen, Kreiss and Wu that only a weak form of the so-called Uniform Kreiss-Lopatinskii Condition is sufficient to imply power 
boundedness.
\end{abstract}
\bigskip

\noindent {\small {\bf AMS classification:} 65M06, 65M12, 47B35, 35L04, 35L20.}

\noindent {\small {\bf Keywords:} hyperbolic equations, difference approximations, stability, boundary conditions, semigroup estimates, Toeplitz operators.}
\bigskip
\bigskip


Throughout this article, we use the notation
\begin{align*}
&\U := \{\zeta \in \C,|\zeta|>1 \}\, ,\quad \D := \{\zeta \in \C,|\zeta|<1 \}\, ,\quad \cercle := \{\zeta \in \C,|\zeta|=1 \} \, ,\\
&\Ubar := \U \cup \cercle \, ,\quad \Dbar := \D \cup \cercle \, .
\end{align*}
If $w$ is a complex number, the notation $B_r(w)$ stands for the open ball in $\C$ centered at $w$ and with radius $r>0$, that is $B_r(w) := 
\{ z \in \C \, / \, |z-w|<r \}$. We let ${\mathcal M}_{n,k} (\C)$ denote the set of $n \times k$ matrices with complex entries. If $n=k$, we simply 
write ${\mathcal M}_n (\C)$.

Eventually, we let $C$, resp. $c$, denote some (large, resp. small) positive constants that may vary throughout the text (sometimes within the same line). 
The dependance of the constants on the various involved parameters is made precise throughout the article.

\section{Introduction}

This article is devoted to the proof of power boundedness for a class of finite rank perturbations of some Toeplitz operators. The problem originates 
in the discretization of initial boundary value problems for hyperbolic partial differential equations. From the standard approach in numerical analysis, 
convergence of numerical schemes follows from stability and consistency. We focus here on stability. For discretized hyperbolic problems with numerical 
boundary conditions, several possible definitions of stability have been explored. From a historic perspective, the first stability definition introduced for 
instance in \cite{kreiss1,osher1,osher2} is a power boundedness property and reads (here $T$ denotes the discrete evolution operator which gives 
the solution at each time step, and the norm in \eqref{powerbound} below corresponds to an operator norm on $\ell^2(\N)$ - the numerical boundary 
conditions are incorporated in the definition of the functional space):
\begin{equation}
\label{powerbound}
\sup_{n \in \N} \, \| \, T^{\, n} \, \| \, < \, + \, \infty \, .
\end{equation}
The notion of \emph{strong stability} later introduced in the fundamental contribution \cite{gks} amounts to proving a strengthened version of the resolvent 
condition :
\begin{equation}
\label{resolvent}
\sup_{z \in \U} \, (|z| \, - \, 1) \, \| \, (z\, I  \, - \, T)^{-1} \, \| \, < \, + \, \infty \, .
\end{equation}
We refer to \cite{SW} for a detailed exposition of the links between the conditions \eqref{powerbound} and \eqref{resolvent}. Both conditions 
\eqref{powerbound} and \eqref{resolvent} preclude the existence of unstable eigenvalues $z \in \U$ for the operator $T$, the so-called 
Godunov-Ryabenkii condition \cite{gko}.

The notion of strong stability analyzed in \cite{gks} has the major advantage of being stable with respect to perturbations. It is an \emph{open} condition, 
hence suitable for nonlinear analysis. However, it is restricted to zero initial data and is therefore not so convenient in practical applications. A long line 
of research has dealt with proving that strong stability implies power boundedness\footnote{Recall that strong stability is actually stronger than just verifying 
\eqref{resolvent}. It is known that in general, \eqref{resolvent} does not imply \eqref{powerbound} in infinite dimension, see \cite{SW}.}. As far as we know, 
the most complete answers in the discrete case are \cite{wu} (for scalar 1D problems and one time step schemes), \cite{jfcag} (for multidimensional systems 
and one time step schemes) and \cite{jfcX} (for scalar multidimensional problems and multistep schemes). In the continuous setting, that is for hyperbolic 
partial differential equations, the reader is referred to \cite{rauch,audiard,metivier2} and to references therein. All the above mentionned works are based 
on the fact that strong stability (or equivalently, the fulfillment of the so-called Uniform Kreiss-Lopatinskii Condition) provides with a sharp \emph{trace 
estimate} of the solution in terms of the data. Summarizing the methodology in the strongly stable case, the goal is to control the time derivative (the time 
difference in the discrete case) of the solution in terms of its trace. All these techniques thus break down if the considered problem is not strongly stable 
and a trace estimate is not available.

However, it has been noted that several numerical boundary conditions do not yield strongly stable problems, see for instance \cite{trefethen3}. 
As observed in \cite{trefethen3} and later made more formal in \cite{kreiss-wu}, even though the Uniform Kreiss-Lopatinskii Condition may not 
be fulfilled, it does seem that some numerical schemes remain stable in the sense that their associated (discrete) semigroup is bounded (property 
\eqref{powerbound}). This is precisely such a result that we aim at proving here, in the case where the Uniform Kreiss-Lopatinskii Condition breaks 
down because of simple, isolated eigenvalues on the unit circle\footnote{This is not the only possible breakdown for the Uniform Kreiss-Lopatinskii 
Condition, see \cite{trefethen3} or \cite{benzoni-serre} for the analogous continuous problem. However, the case we deal here with is the simplest 
and therefore the first to tackle in view of future generalizations.}. Up to our knowledge, this is the first general result of this type. Our analysis is based 
on pointwise semigroup bounds in the spirit of a long series of works initiated in \cite{ZH98} and devoted to the stability analysis of viscous shock 
profiles. We thus restrict, in this work, to finite difference approximations of the transport operator that are stable in $\ell^1(\mathbb{Z})$ (or equivalently 
$\ell^\infty(\mathbb{Z})$) without any boundary condition. By the result in \cite{Thomee}, see more recent developments in \cite{Despres,Diaconis-SaloffCoste}, 
we thus base our analysis on the dissipation Assumption \ref{hyp:1} below. This does seem restrictive at first glance, but it is very likely that our 
methodology is flexible enough to handle more general situations, up to refining some steps in the analysis. We shall explore such extensions in 
the future.

\subsection{The framework}

We consider the scalar transport equation
\begin{equation}
\label{transport}
\partial_t u \, + \, a \, \partial_x u \, = \, 0 \, ,
\end{equation}
in the half-line $\{ x \, > \, 0 \}$, and restrict from now on to the case of an incoming velocity, that is, $a>0$. The transport equation \eqref{transport} is 
supplemented with Dirichlet boundary conditions:
\begin{equation}
\label{dirichlet}
u(t,0) \, = \, 0 \, ,
\end{equation}
and a Cauchy datum at $t=0$. Our goal in this article is to explore the stability of finite difference approximations of the continuous problem \eqref{transport}, 
\eqref{dirichlet}. We thus introduce a time step $\Delta t>0$ and a space step $\Delta x>0$, assuming from now on that the ratio $\lambda \, := \, \Delta t / 
\Delta x$ is always kept fixed. The solution to \eqref{transport}, \eqref{dirichlet} is meant to be approximated by a sequence\footnote{As usual, we identify 
the sequence $(u_j^n)$ with its associated step function on the time-space grid.} $(u_j^n)$. We consider some fixed integers $r,p$ with $\min(r,p) \ge 1$. 
The \emph{interior} cells are then the intervals $[(j-1) \, \Delta x,j \, \Delta x)$ with $j \in \N^*$, and the \emph{boundary} cells are the intervals $[(\nu-1) \, 
\Delta x,\nu \, \Delta x)$ with $\nu=1-r,\dots,0$. The numerical scheme in the interior domain $\N^*$ reads:
\begin{equation}
\label{schema-int}
u_j^{n+1} \, = \, \sum_{\ell=-r}^p \, a_\ell \, u_{j+\ell}^n \, ,\quad j \ge 1 \, ,
\end{equation}
where the coefficients $a_{-r},\dots,a_p$ are real and may depend only on $\lambda$ and $a$, but not on $\Delta t$ (or $\Delta x$). The numerical boundary 
conditions that we consider in this article take the form:
\begin{equation}
\label{schema-bc}
\forall \, \nu \, = \, 1-r,\dots,0 \, ,\quad u_\nu^{n+1} \, = \, \sum_{\ell=1}^{p_b} \, b_{\ell,\nu} \, u_\ell^{n+1} \, ,
\end{equation}
where the coefficients $b_{\ell,\nu}$ in \eqref{schema-bc} are real and may also depend on $\lambda$ and $a$, but not on $\Delta t$ (or $\Delta x$). We 
assume for simplicity that the (fixed) integer $p_b$ in \eqref{schema-bc} satisfies $p_b \le p$. This is used below to simplify some minor technical details 
(when we rewrite high order \emph{scalar} recurrences as first order \emph{vectorial} recurrences).

An appropriate vector space for the stability analysis of \eqref{schema-int}-\eqref{schema-bc} is the Hilbert space $\mathcal{H}$ defined by:
\begin{equation}
\label{defH}
\mathcal{H} \, := \, \left\{  (w_j)_{j \ge 1-r} \in \ell^2 \quad / \quad \forall \, \nu \, = \, 1-r,\dots,0 \, ,\quad w_\nu \, = \, \sum_{\ell=1}^{p_b} \, b_{\ell,\nu} \, w_\ell 
\right\} \, .
\end{equation}
Sequences in $\mathcal{H}$ are assumed to be complex valued (even though, in practice, the numerical scheme \eqref{schema-int}-\eqref{schema-bc} 
applies to real sequences). Since any element $w$ of $\mathcal{H}$ is uniquely determined by its interior values (those $w_j$'s with $j \ge 1$), we use 
the following norm on $\mathcal{H}$:
$$
\forall \, w \in \mathcal{H} \, ,\quad \| w \|_{\mathcal{H}}^2 \, := \, \sum_{j \ge 1} \, |w_j|^2 \, .
$$
The numerical scheme \eqref{schema-int}-\eqref{schema-bc} can be then rewritten as:
$$
\forall \, n \in \N \, ,\quad u^{n+1} \, = \, \mathcal{T} \, u^n \, ,\quad u^0 \in \mathcal{H} \, ,
$$
where $\mathcal{T}$ is the bounded operator on $\mathcal{H}$ defined by:
\begin{equation}
\label{defT}
\forall \, w \in \mathcal{H} \, ,\quad \forall \, j \ge 1 \, ,\quad (\mathcal{T} \, w)_j \, := \, \sum_{\ell=-r}^p \, a_\ell \, w_{j+\ell} \, .
\end{equation}
Recall that a sequence in $\mathcal{H}$ is uniquely determined by its interior values so \eqref{defT} determines $\mathcal{T} \, w \in \mathcal{H}$ 
unambiguously. We introduce the following terminology.

\begin{definition}[Stability \cite{kreiss1,osher2}]
\label{def:stability}
The numerical scheme \eqref{schema-int}-\eqref{schema-bc} is said to be stable if there exists a constant $C>0$ such that, for any $f \in \mathcal{H}$, 
the solution $(u^n)_{n \in N}$ to \eqref{schema-int}-\eqref{schema-bc} with initial condition $u^0 =f$ satisfies:
$$
\sup_{n \in \N} \, \| \, u^n \, \|_{\mathcal{H}} \, \le \, C \, \| \, f \, \|_{\mathcal{H}} \, .
$$
This means equivalently that the operator $\mathcal{T}$ in \eqref{defT} is power bounded by the same constant $C$:
$$
\sup_{n \in \N} \, \| \, \mathcal{T}^{\, n} \, \|_{\mathcal{H} \rightarrow \mathcal{H}} \, \le \, C \, .
$$
\end{definition}

\noindent Our goal in this article is to show that the scheme \eqref{schema-int}-\eqref{schema-bc} is stable under some spectral assumptions on 
the operator $\mathcal{T}$.

\subsection{Assumptions and main result}

We make two major assumptions: one on the finite difference scheme \eqref{schema-int}, and one on the compatibility between the scheme 
\eqref{schema-int} and the numerical boundary conditions \eqref{schema-bc}.

\begin{assumption}
\label{hyp:1}
The finite difference approximation \eqref{schema-int} is consistent with the transport equation \eqref{transport}:
\begin{equation}
\label{hyp:consistance}
\sum_{\ell=-r}^p \, a_\ell \, = \, 1 \, ,\quad \sum_{\ell=-r}^p \, \ell \, a_\ell \, = \, -\lambda \, a \, < \, 0 \, ,\quad \text{\rm (consistency).}
\end{equation}
Moreover, the coefficients $a_\ell$ in \eqref{schema-int} satisfy $a_{-r} \, a_p \neq 0$ and the \emph{dissipativity} condition:
\begin{equation}
\label{hyp:stabilite1}
\forall \, \theta \in [ - \, \pi \, , \, \pi ] \setminus \{ 0 \} \, ,\quad \left| \, \sum_{\ell=-r}^p \, a_\ell \, {\rm e}^{\, \mathbf{i} \, \ell \, \theta} \, \right| \, < \, 1 \, ,
\end{equation}
and for some nonzero integer $\mu$ and some positive real number $\beta>0$, there holds:
\begin{equation}
\label{hyp:stabilite2}
\sum_{\ell=-r}^p \, a_\ell \, {\rm e}^{\, \mathbf{i} \, \ell \, \theta} \, = \, \exp \left( - \, \mathbf{i} \, \lambda \, a \, \theta \, - \, \beta \, \theta^{\, 2 \, \mu} \, 
+ \, O \Big( \theta^{\, 2 \, \mu+1} \Big) \right) \, ,
\end{equation}
as $\theta$ tends to $0$.
\end{assumption}

\noindent An important consequence of Assumption \ref{hyp:1} is the following Bernstein type inequality, which we prove in Appendix \ref{sec:appendix}.

\begin{lemma}
\label{lem:Bernstein}
Under Assumption \ref{hyp:1}, there holds $\lambda \, a \, < \, r$.
\end{lemma}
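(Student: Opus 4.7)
The plan is to encode the finite difference symbol $\hat{a}(\theta) := \sum_{\ell=-r}^p a_\ell \, {\rm e}^{\, \mathbf{i} \, \ell \, \theta}$ as a polynomial on the unit disk, and to read off the inequality $\lambda \, a < r$ as a boundary derivative estimate for that polynomial. Concretely, I introduce
\[
Q(z) \, := \, \sum_{\ell=-r}^p \, a_\ell \, z^{\ell + r} \, , \qquad z \in \C \, ,
\]
a polynomial of degree exactly $p+r \ge 2$ (recall $\min(r,p) \ge 1$ and $a_{-r} \, a_p \neq 0$), which obeys $Q({\rm e}^{\, \mathbf{i} \, \theta}) = {\rm e}^{\, \mathbf{i} \, r \, \theta} \, \hat{a}(\theta)$ for every real $\theta$. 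The two relations in \eqref{hyp:consistance} translate into
\[
Q(1) \, = \, \sum_{\ell=-r}^p \, a_\ell \, = \, 1 \, , \qquad Q'(1) \, = \, \sum_{\ell=-r}^p \, (\ell + r) \, a_\ell \, = \, r \, - \, \lambda \, a \, ,
\]
so the lemma amounts to proving the strict positivity $Q'(1) > 0$.

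The dissipativity condition \eqref{hyp:stabilite1} together with $\hat{a}(0)=1$ gives $|Q(z)| \le 1$ everywhere on $\cercle$, with equality only at $z=1$. Since $Q$ is a nonconstant polynomial, the maximum modulus principle propagates this to $|Q(z)| \le 1$ on the whole closed disk $\Dbar$ and upgrades it to $|Q(z)| < 1$ for every $z \in \D$. In particular $Q(0) = a_{-r}$ is a real number (all coefficients $a_\ell$ are real) lying strictly in the open interval $(-1,1)$.

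I then conclude by the Julia-Carath\'eodory inequality (equivalently, the boundary Schwarz lemma) applied to the holomorphic self-map $Q : \D \to \D$ at its boundary fixed point $z=1$. Since $Q$ is a polynomial, its angular derivative at $1$ exists and coincides with $Q'(1)$, and Julia's inequality (evaluated at $z=0$) delivers
\[
r \, - \, \lambda \, a \, = \, Q'(1) \, \ge \, \frac{|1 - Q(0)|^2}{1 - |Q(0)|^2} \, = \, \frac{1 \, - \, a_{-r}}{1 \, + \, a_{-r}} \, > \, 0 \, ,
\]
which is the desired bound. The main obstacle I foresee is obtaining the \emph{strict} sign $Q'(1) > 0$ rather than the weaker $Q'(1) \ge 0$: the elementary remark that $t \mapsto Q(t)$ is real-valued, bounded by $1$ on $[0,1]$, and attains $1$ at $t=1$ only yields the one-sided inequality $Q'(1) \ge 0$, leaving open the borderline case $\lambda \, a = r$. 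Julia's inequality closes this gap by converting the non-degeneracy $|Q(0)|<1$ (itself a consequence of $Q$ being non-constant) into a genuinely positive quantitative lower bound on $Q'(1)$.
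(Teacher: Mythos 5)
Your proof is correct, and it takes a genuinely different route from the paper's. Both arguments introduce the same polynomial (the paper calls it $P$, you call it $Q$), observe that it maps $\Dbar$ into $\Dbar$ by the maximum modulus principle, and identify $Q'(1) = r - \lambda a$. The difference lies entirely in how the \emph{strict} inequality $Q'(1) > 0$ is obtained. The paper first records the soft inequality $Q'(1) \ge 0$ (from monotonicity of $Q$ on $[0,1]$), then rules out equality by a separate contradiction argument: it uses the fine dissipative expansion \eqref{hyp:stabilite2}, analytically continues it to complex $\theta$, picks $\theta = \varepsilon\,{\rm e}^{\mathbf{i}\pi/(2\mu)}$ so that $-\beta\theta^{2\mu}$ becomes $+\beta\varepsilon^{2\mu}$, and thereby exhibits an interior point where $|P|>1$. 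You instead invoke the Julia--Carath\'eodory theorem at the boundary fixed point $z=1$ and get the explicit quantitative lower bound $Q'(1) \ge (1-a_{-r})/(1+a_{-r}) > 0$ directly, using only that $a_{-r} = Q(0)$ is real and lies strictly in $(-1,1)$ (a consequence of maximum modulus, since $Q$ is nonconstant). Your argument has two advantages: it is shorter once Julia--Carath\'eodory is taken as known, and it does not use hypothesis \eqref{hyp:stabilite2} at all --- the consistency relations \eqref{hyp:consistance} and the dissipativity bound \eqref{hyp:stabilite1} already suffice. The paper's proof is more elementary (maximum principle plus Taylor expansion, no boundary Schwarz lemma) and fits stylistically with its repeated use of the local expansion \eqref{hyp:stabilite2}. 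One small remark: your argument doesn't actually need the hypothesis $a_{-r}\neq 0$; the bound $(1-a_{-r})/(1+a_{-r})>0$ only requires $a_{-r}\in(-1,1)$, which already follows from $|Q(0)|<1$.
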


The relevance of \eqref{hyp:stabilite2} for the $\ell^1$ stability of \eqref{schema-int} on $\Z$ is the major result in \cite{Thomee} (see 
\cite{CF1,Despres,Diaconis-SaloffCoste} for recent developments in this direction). This stability property will greatly simplify the final steps of the 
proof of our main result, which is Theorem \ref{thm1} below. Relaxing \eqref{hyp:stabilite1} and \eqref{hyp:stabilite2} in order to encompass a wider 
class of finite difference schemes is postponed to some future works. We now state two Lemma whose proofs, which are relatively standard, can 
also be found in Appendix~\ref{sec:appendix}. These two Lemma will allow us to introduce our second spectral assumption on the operator $\T$.

\begin{lemma}
\label{lem:1}
There exists a constant $c_0>0$ such that, if we define the set:
$$
\mathcal{C} \, := \, \Big\{ 
\rho \, {\rm e}^{\, \mathbf{i} \, \varphi} \in \C \, / \, \varphi \in [ - \, \pi \, , \, \pi ] \quad \text{\rm and} \quad 
0 \, \le \, \rho \, \le \, 1 \, - \, c_0 \, \varphi^{\, 2 \, \mu} \Big\} \, ,
$$
then $\mathcal{C}$ is a compact star-shaped subset of $\Dbar$, and the curve:
\begin{equation}
\label{curve-spectrum}
\left\{ \sum_{\ell=-r}^p \, a_\ell \, {\rm e}^{\, \mathbf{i} \, \ell \, \theta} \, / \, \theta \in [ - \, \pi \, , \, \pi ] \right\}
\end{equation}
is contained in $\mathcal{C}$.
\end{lemma}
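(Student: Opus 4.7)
The plan is to choose $c_0$ small enough so that three requirements are simultaneously satisfied, one for each part of the statement. First I would observe that as soon as $c_0 \le 1/\pi^{2\mu}$, the interval $[0,1-c_0 \varphi^{2\mu}]$ is non-empty and contained in $[0,1]$ for every $\varphi \in [-\pi,\pi]$; consequently $\mathcal{C} \subset \Dbar$, and $\mathcal{C}$ is compact because it is the image of the compact subset $K := \{(\rho,\varphi) \in [0,1] \times [-\pi,\pi] : \rho \le 1 - c_0 \varphi^{2\mu}\}$ of $\R^2$ under the continuous map $(\rho,\varphi) \mapsto \rho \, {\rm e}^{\mathbf{i}\varphi}$. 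Star-shapedness with respect to the origin is immediate: if $z = \rho \, {\rm e}^{\mathbf{i}\varphi} \in \mathcal{C}$ and $t \in [0,1]$, then $tz = (t\rho) \, {\rm e}^{\mathbf{i}\varphi}$ with $t\rho \in [0,\rho] \subset [0,1-c_0\varphi^{2\mu}]$ (and the case $tz = 0$ is trivial).

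For the containment of the curve \eqref{curve-spectrum} in $\mathcal{C}$, set $\gamma(\theta) := \sum_{\ell=-r}^p a_\ell \, {\rm e}^{\mathbf{i}\ell\theta}$ and split $[-\pi,\pi]$ into a small neighborhood of $0$ and its complement. In a neighborhood $[-\eps_0,\eps_0]$ of $0$, the refined expansion \eqref{hyp:stabilite2} directly gives the polar decomposition $\gamma(\theta) = \rho(\theta) \, {\rm e}^{\mathbf{i}\varphi(\theta)}$ with
\begin{equation*}
\rho(\theta) \, = \, 1 \, - \, \beta \, \theta^{2\mu} \, + \, O\bigl(\theta^{2\mu+1}\bigr) \, , \qquad
\varphi(\theta) \, = \, -\lambda \, a \, \theta \, + \, O\bigl(\theta^{2\mu+1}\bigr) \, ,
\end{equation*}
so that $\varphi(\theta)^{2\mu} = (\lambda a)^{2\mu} \, \theta^{2\mu} \, (1+O(\theta^{2\mu}))$. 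The condition $\rho(\theta) \le 1 - c_0 \, \varphi(\theta)^{2\mu}$ therefore reduces to $c_0 \, (\lambda a)^{2\mu} \le \beta + O(\theta)$, which holds on $[-\eps_0,\eps_0]$ (shrinking $\eps_0$ if needed) as soon as $c_0$ is chosen strictly less than $\beta/(\lambda a)^{2\mu}$.

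On the complementary compact set $\{\theta \in [-\pi,\pi] : |\theta| \ge \eps_0\}$, the strict inequality \eqref{hyp:stabilite1} combined with the continuity of $\theta \mapsto |\gamma(\theta)|$ yields a uniform gap $|\gamma(\theta)| \le 1 - \delta$ for some $\delta > 0$. Since any representation $\gamma(\theta) = \rho \, {\rm e}^{\mathbf{i}\varphi}$ uses $\varphi \in [-\pi,\pi]$, one has $c_0 \, \varphi^{2\mu} \le c_0 \, \pi^{2\mu}$, and requiring additionally $c_0 \le \delta/\pi^{2\mu}$ ensures $|\gamma(\theta)| \le 1 - \delta \le 1 - c_0 \varphi^{2\mu}$. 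Finally one picks $c_0$ as the minimum of $1/\pi^{2\mu}$, $\delta/\pi^{2\mu}$, and a value strictly below $\beta/(\lambda a)^{2\mu}$. The only delicate point is making the asymptotic argument in the second paragraph consistent: one must check that $\varphi(\theta) \in [-\pi,\pi]$ on the chosen neighborhood (automatic for small $\eps_0$ since $\varphi(\theta) \to 0$), and that the $O(\theta^{2\mu+1})$ error in $\rho(\theta)$ is dominated by the leading quadratic-type loss $(\beta - c_0(\lambda a)^{2\mu})\, \theta^{2\mu}$, which holds provided $c_0$ is strictly less than (not equal to) $\beta/(\lambda a)^{2\mu}$.
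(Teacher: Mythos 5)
Your proof is correct and follows essentially the same route as the paper's: split $[-\pi,\pi]$ into a small neighbourhood of $0$, where the dissipativity expansion \eqref{hyp:stabilite2} controls the polar decomposition $\rho(\theta)\,{\rm e}^{\mathbf{i}\varphi(\theta)}$, and the complementary compact set, where \eqref{hyp:stabilite1} plus continuity gives a uniform gap $1-\delta$; then take $c_0$ small enough for both. The only addition is your explicit (and correct) verification of compactness and star-shapedness, which the paper states without proof.
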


The above Lemma~\ref{lem:1} provides an estimate on the location of the essential spectrum of the operator $\T$ and shows that it is contained in 
$\mathcal{C}$ (see the reminder below on the spectrum of Toeplitz operators). Next, we introduce the following matrix:
\begin{equation}
\label{defM}
\forall \, z \in \C \, ,\quad \M (z) \, := \, \begin{pmatrix}
\dfrac{\delta_{p-1,0} \, z \, - \, a_{p-1}}{a_p} & \dots & \dots & \dfrac{\delta_{-r,0} \, z \, - \, a_{-r}}{a_p} \\
1 & 0 & \dots & 0 \\
0 & \ddots & \ddots & \vdots \\
0 & 0 & 1 & 0 \end{pmatrix} \in \mathcal{M}_{p+r}(\C) \, .
\end{equation}
Since $\min (r,p) \ge 1$, the upper right coefficient of $\M(z)$ is always nonzero (it equals $-a_{-r}/a_p$), and $\M(z)$ is invertible. We shall repeatedly 
use the inverse matrix $\M(z)^{-1}$ in what follows.

\begin{lemma}[Spectral splitting]
\label{lem:2}
Let $z \in \C$ and let the matrix $\M(z)$ be defined as in \eqref{defM}. Let the set $\mathcal{C}$ be defined by Lemma \ref{lem:1}. Then 
for $z \not \in \mathcal{C}$, $\M(z)$ has:
\begin{itemize}
 \item no eigenvalue on $\cercle$,
 \item $r$ eigenvalues in $\D \setminus \{ 0 \}$,
 \item $p$ eigenvalues in $\U$ (eigenvalues are counted with multiplicity).
\end{itemize}
Furthermore, $\M(1)$ has $1$ as a simple eigenvalue, it has $r-1$ eigenvalues in $\D$ and $p$ eigenvalues in $\U$.
\end{lemma}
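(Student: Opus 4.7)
The plan is to identify the eigenvalues of $\M(z)$ as the solutions of the Laurent polynomial equation $F(\kappa) = z$, where $F(\kappa) := \sum_{\ell = -r}^{p} a_\ell \, \kappa^\ell$, and then count them via a homotopy argument in the variable $z$. A direct computation with the companion form \eqref{defM} gives $a_p \, \chi_{\M(z)}(\kappa) = \kappa^r \bigl(F(\kappa) - z\bigr) = \sum_{\ell = -r}^{p} a_\ell \, \kappa^{\ell + r} - z \, \kappa^r$, a polynomial in $\kappa$ of degree $p + r$ with nonzero constant term $a_{-r}$. Hence the eigenvalues of $\M(z)$, counted with multiplicity, are the $p + r$ solutions of $F(\kappa) = z$, and none is zero. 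For $z \notin \mathcal{C}$, none can lie on $\cercle$ either: any such eigenvalue $\kappa$ would give $z = F(\kappa)$ on the curve \eqref{curve-spectrum}, hence in $\mathcal{C}$ by Lemma~\ref{lem:1}.

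To count the eigenvalues in $\D$ and $\U$, I first note that $\C \setminus \mathcal{C}$ is path-connected: since $\mathcal{C}$ is star-shaped with respect to $0$, every $z \notin \mathcal{C}$ is joined to $\infty$ by the radial ray $\{t z : t \ge 1\} \subset \C \setminus \mathcal{C}$. The integer-valued function
\bqs
N_\D(z) \, := \, \dfrac{1}{2\pi\mathbf{i}} \, \oint_{\cercle} \dfrac{\chi_{\M(z)}'(\kappa)}{\chi_{\M(z)}(\kappa)} \, \md\kappa
\eqs
is continuous on $\C \setminus \mathcal{C}$ and therefore constant. Its value is read off by letting $|z| \to \infty$: the dominant balances in $F(\kappa) = z$ are $a_{-r}/\kappa^r \sim z$, yielding $r$ small roots of modulus $\sim |z|^{-1/r}$ (in $\D$), and $a_p \, \kappa^p \sim z$, yielding $p$ large roots of modulus $\sim |z|^{1/p}$ (in $\U$); a Rouch\'e argument on circles of small and large radius formalizes this. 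Hence $N_\D \equiv r$ and the complementary $p$ roots lie in $\U$ throughout $\C \setminus \mathcal{C}$.

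For the special point $z = 1$: the consistency condition \eqref{hyp:consistance} gives $F(1) = 1$ and $F'(1) = -\lambda a \neq 0$, so $\kappa = 1$ is a simple root of $F(\kappa) = 1$ and therefore a simple eigenvalue of $\M(1)$; the strict dissipativity \eqref{hyp:stabilite1} forbids any other root of $F(\kappa) = 1$ on $\cercle$. To allocate the remaining $p + r - 1$ eigenvalues, I approach $z = 1$ along the real segment $z = 1 + t$ with $t > 0$ small; then $z \notin \mathcal{C}$ (since $|z| > 1$) and the previous paragraph yields exactly $r$ eigenvalues in $\D$. The implicit function theorem furnishes a holomorphic branch $\kappa(z) = 1 - (z - 1)/(\lambda a) + O((z-1)^2)$ near $\kappa = 1$, and for $z = 1 + t$ the first-order term places $\kappa(z)$ manifestly inside $\D$. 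Letting $t \to 0^+$, this branch tends to $1$, while the other $r - 1$ eigenvalues in $\D$ stay in $\D$ (their limits cannot lie on $\cercle$, as the only eigenvalue of $\M(1)$ on $\cercle$ is already accounted for by the near-$1$ branch) and the $p$ eigenvalues in $\U$ stay in $\U$. This gives the announced splitting $1 + (r - 1) + p$ at $z = 1$.

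The main obstacle is the last step: the counts $(r, p)$ outside $\mathcal{C}$ are robust, but at $z = 1 \in \partial \mathcal{C}$ the migration of exactly one eigenvalue to $\cercle$ must be allocated carefully, and picking the real positive approach to $z = 1$ is precisely what makes the perturbative branch visibly come from $\D$ at first order in $t$.
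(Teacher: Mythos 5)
Your proof is correct and follows essentially the same route as the paper's: identify eigenvalues of $\M(z)$ with the nonzero roots of $F(\kappa)=z$, use star-shapedness of $\mathcal{C}$ to get connectedness of the complement and hence constancy of the count of eigenvalues in $\D$, count at $z\to\infty$, and handle $z=1$ by tracking the holomorphic branch $\kappa(z)=1-(z-1)/(\lambda a)+O((z-1)^2)$. Your version is if anything a touch more careful: you make the characteristic-polynomial factorization $a_p\,\chi_{\M(z)}(\kappa)=\kappa^r(F(\kappa)-z)$ explicit, you phrase the constancy via the argument-principle integral, you invoke Rouch\'e for the count at infinity, and you approach $z=1$ along the explicit real ray $z=1+t$, $t>0$, which makes it transparent that the near-$1$ branch lies in $\D$ (the paper leaves the choice of approach direction implicit). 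None of these changes alter the substance of the argument.
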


We introduce some notation. For $z \not \in \mathcal{C}$, Lemma \ref{lem:2} shows that the so-called stable subspace, which is spanned by the 
generalized eigenvectors of $\M(z)$ associated with eigenvalues in $\D$, has constant dimension $r$. We let $\E^s(z)$ denote the stable subspace 
of $\M(z)$ for $z \not \in \mathcal{C}$. Because of the spectral splitting shown in Lemma \ref{lem:2}, $\E^s(z)$ depends holomorphically on $z$ in 
the complementary set of $\mathcal{C}$. We can therefore find, near every point $\underline{z} \not \in \mathcal{C}$, a basis $e_1(z),\dots,e_r(z)$ 
of $\E^s(z)$ that depends holomorphically on $z$. Similarly, the unstable subspace, which is spanned by the generalized eigenvectors of $\M(z)$ 
associated with eigenvalues in $\U$, has constant dimension $p$. We denote it by $\E^u(z)$, and it also depends holomorphically on $z$ in the 
complementary set of $\mathcal{C}$. With obvious notation, the projectors associated with the decomposition:
$$
\forall \, z \not \in \mathcal{C} \, ,\quad \C^{p+r} \, = \, \E^s(z) \oplus \E^u(z) \, ,
$$
are denoted $\pi^s(z)$ and $\pi^u(z)$.

Let us now examine the situation close to $z=1$. Since $1$ is a simple eigenvalue of $\M(1)$, we can extend it holomorphically to a simple eigenvalue 
$\kappa(z)$ of $\M(z)$ in a neighborhood of $1$. This eigenvalue is associated with the eigenvector:
$$
E(z) \, := \, \begin{bmatrix}
\, \kappa(z)^{p+r-1} \, \\
\vdots \\
\kappa(z) \\
1 \end{bmatrix} \in \C^{p+r} \, ,
$$
which also depends holomorphically on $z$ in a neighborhood of $1$. Furthermore, the unstable subspace $\E^u(1)$ associated with eigenvalues in 
$\U$ has dimension $p$. It can be extended holomorphically to a neighborhood of $1$ thanks to the Dunford formula for spectral projectors. This 
holomorphic extension coincides with the above definition for $\E^u(z)$ if $z$ is close to $1$ and $z \not \in \mathcal{C}$. Eventually, the stable 
subspace of $\M(1)$ associated with eigenvalues in $\D$ has dimension $r-1$. For the sake of clarity, we denote it by $\E^{ss}(1)$ (the double $s$ 
standing for \emph{strongly stable}). Using again the Dunford formula for spectral projectors, we can extend this ``strongly stable'' subspace 
holomorphically with respect to $z$; for $z$ close to $1$, $\E^{ss}(z)$ has dimension $r-1$ and is either all or a hyperplane within the stable 
subspace of $\M(z)$. Namely, the situation has no ambiguity: for $z \not \in \mathcal{C}$ close to $1$, the eigenvalue $\kappa(z)$ necessarily 
belongs to $\D$ and the stable subspace $\E^s(z)$ of $\M(z)$ (which has been defined above and has dimension $r$) splits as:
\begin{equation}
\label{decomposition-Es}
\E^s(z) \, = \, \E^{ss}(z) \, \oplus \, \text{\rm Span } E(z) \, .
\end{equation}
Since the right hand side in \eqref{decomposition-Es} depends holomorphically on $z$ in a whole neighborhood of $1$ and not only in $\mathcal{C}^c$, 
the stable subspace $\E^s(z)$ extends holomorphically to a whole neighborhood of $1$ as an invariant subspace of dimension $r$ for $\M(z)$. In 
particular, we shall feel free to use below the notation $\E^s(1)$ for the $r$-dimensional vector space:
\begin{equation}
\label{decomposition-Es1}
\E^s(1) \, := \, \E^{ss}(1) \, \oplus \, \text{\rm Span } \begin{bmatrix}
1 \\
1 \\
\vdots \\
\, 1 \, \end{bmatrix} \, ,
\end{equation}
which is, in our case, the direct sum of the stable and central subspaces of $\M(1)$.

For future use, it is convenient to introduce the following matrix:
\begin{equation}
\label{defB}
\mathcal{B} \, := \, \begin{pmatrix}
0 & \cdots & 0 & - \, b_{p_b,0} & \cdots & - \, b_{1,0} & 1 & 0 & \cdots & 0 \\
\vdots &  & \vdots & \vdots &  & \vdots & 0  & \vdots & \ddots & \vdots \\
\vdots &  & \vdots & \vdots &  & \vdots & \vdots  & \ddots & \ddots & 0 \\
0 & \cdots & 0 & - \, b_{p_b,1-r} & \cdots & - \, b_{1,1-r} & 0 & \cdots & 0 & 1 \end{pmatrix} \in \mathcal{M}_{r,p+r}(\R) \, .
\end{equation}
We can now state our final assumption.

\begin{assumption}
\label{hyp:2}
For any $z \in \U \cup \{ 1 \}$, there holds:
$$
\C^{p+r} \, = \, \text{\rm Ker } \mathcal{B} \, \oplus \, \E^s (z) \, ,
$$
or, in other words, $\mathcal{B}|_{\E^s (z)}$ is an isomorphism from $\E^s (z)$ to $\C^r$. Moreover, choosing a holomorphic basis 
$e_1(z),\dots,e_r(z)$ of $\E^s (z)$ near every point $\underline{z} \in \cercle \setminus \{ 1 \}$, the function:
$$
\Delta \quad : \quad z \, \longmapsto \, \det \begin{bmatrix}
\, \mathcal{B} \, e_1(z) \, & \, \cdots \, & \, \mathcal{B} \, e_r(z) \, \end{bmatrix}
$$
has finitely many simple zeroes in $\cercle \setminus \{ 1 \}$.
\end{assumption}

Let us recall that for $z \in \Ubar \setminus \{ 1 \}$, $\E^s (z)$ denotes the stable subspace of the matrix $\M(z)$ in \eqref{defM} since then 
$z \in \mathcal{C}^c$. At the point $z=1$, $\E^s (1)$ denotes the holomorphic extension of $\E^s (z)$ at $1$ and it is furthermore given by 
\eqref{decomposition-Es1}.

Of course, the function $\Delta$ in Assumption \ref{hyp:2} depends on the choice of the (holomorphic) basis $e_1(z)$, \dots, $e_r(z)$ of 
$\E^s (z)$. However, the location of its zeroes and their multiplicity does not depend on that choice, which means that Assumption \ref{hyp:2} 
is an intrinsic property of the operator $\mathcal{T}$. We shall refer later on to the function $\Delta$ as the \emph{Lopatinskii determinant} 
associated with \eqref{schema-int}-\eqref{schema-bc}. It plays the role of a characteristic polynomial for $\mathcal{T}$ which detects the 
eigenvalues in $\mathcal{C}^c$. This object already appears in \cite{kreiss1,osher1,osher2}. Its analogue in the study of discrete shock 
profiles is the so-called Evans function, see \cite{godillon}. Our main result is the following.

\begin{theorem}
\label{thm1}
Under Assumptions \ref{hyp:1} and \ref{hyp:2}, the operator $\mathcal{T}$ in \eqref{defT} is power bounded, that is, the numerical scheme 
\eqref{schema-int}-\eqref{schema-bc} is stable.
\end{theorem}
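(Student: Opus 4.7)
The plan is to establish power boundedness via a pointwise Green's function / resolvent analysis, in the spirit of the ZH98 program evoked in the introduction. I would represent the iterates by the Dunford--Cauchy integral
\[
\mathcal{T}^{\, n} \, = \, \frac{1}{2 \, \pi \, \mathbf{i}} \, \oint_{|z|=\rho} \, z^{\, n} \, (z \, I \, - \, \mathcal{T})^{-1} \, \md z \, ,
\]
for some $\rho > 1$, and then push the contour down towards $\cercle$ while extracting the contributions of the spectrum that touches the unit circle. The first task is to construct the resolvent explicitly for $z \in \mathcal{C}^{\, c}$. Solving $(z \, I - \mathcal{T}) \, w = f$ reduces to a linear recurrence of order $p+r$ on the interior indices whose companion matrix is $\M(z)$. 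By Lemma \ref{lem:2}, the $\ell^{\, 2}$ solutions of the homogeneous recurrence are parametrized by the $r$-dimensional stable subspace $\E^{\, s}(z)$; imposing the $r$ numerical boundary relations encoded by the matrix $\mathcal{B}$ in \eqref{defB} gives an $r \times r$ linear system whose determinant is $\Delta(z)$ up to a nonvanishing factor. When $\Delta(z) \ne 0$, inverting this system yields a kernel representation $\big((z \, I - \mathcal{T})^{-1} \, f\big)_{\, j} = \sum_{k \ge 1} G(z,j,k) \, f_{\, k}$ whose building blocks are the powers of the stable eigenvalues of $\M(z)$, the spectral projector on $\E^{\, s}(z)$, and the inverse of $\mathcal{B}|_{\E^{\, s}(z)}$.

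Next I would deform the contour. By Lemma \ref{lem:1} the essential spectrum lies in $\mathcal{C} \subset \Dbar$ and touches $\cercle$ only at $z = 1$; Assumption \ref{hyp:2} adds finitely many simple zeros $\underline{z}_{\, 1},\dots,\underline{z}_{\, N}$ of $\Delta$ on $\cercle \setminus \{ 1 \}$, which are isolated simple eigenvalues of $\mathcal{T}$ on the unit circle. I would shrink $|z|=\rho$ to a contour that follows $\cercle$ from outside except for small semi-circular indentations of radius $\delta$ around each $\underline{z}_{\, i}$ and around $z=1$. Each $\underline{z}_{\, i}$ contributes a residue equal to $\underline{z}_{\, i}^{\, n} \, P_{\, i}$, where $P_{\, i}$ is a finite rank spectral projector with norm controlled by the Dunford formula, the simplicity being inherited from the simplicity of the zero of $\Delta$. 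Since $|\underline{z}_{\, i}| = 1$ and $\| P_{\, i} \|$ is bounded, these contributions are trivially uniformly bounded in $n$.

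The main obstacle is the local analysis near $z=1$. There the essential spectrum approaches $\cercle$ tangentially with the $2 \, \mu$-parabolic contact dictated by \eqref{hyp:stabilite2}, and the nature of the stable subspace changes: the unit eigenvalue $\kappa(1) = 1$ of $\M(1)$ moves into $\D$ as $z$ exits $\mathcal{C}$. I would use the holomorphic extension of $\E^{\, s}(z)$ through $z = 1$ afforded by \eqref{decomposition-Es}--\eqref{decomposition-Es1}, together with Assumption \ref{hyp:2} at $z=1$, to keep $\mathcal{B}|_{\E^{\, s}(z)}^{-1}$ holomorphic and bounded there. Parametrizing $z = \exp(\tau)$ with $\Re \tau \sim \beta \, \theta^{\, 2 \, \mu}$ coming from \eqref{hyp:stabilite2}, a saddle-point / steepest descent analysis along a contour steered into $\{ \Re \tau < 0 \}$ should turn the integrand into a generalized Gaussian kernel and yield pointwise bounds on the full Green's function of the form $|G_{\, n}(j,k)| \le C \, n^{\, -1/(2\mu)} \exp \big( - \, c \, |j - k - \lambda \, a \, n|^{\, 2 \mu / (2 \mu - 1)} / n^{\, 1/(2 \mu - 1)} \big)$ for the interior part, plus similarly controlled boundary layer corrections encoding the trace of the stable modes. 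Conversion of this local heat-kernel type bound into the claimed power boundedness is where the heart of the argument lies.

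Finally, to pass from these pointwise Green's function bounds to the $\mathcal{H} \to \mathcal{H}$ bound required in Definition \ref{def:stability}, I would decompose $\mathcal{T}^{\, n}$ on the half-line as the restriction of its translation-invariant Toeplitz extension on $\Z$ plus a boundary correction kernel. Thom\'ee's theorem \cite{Thomee}, applicable thanks to the dissipativity \eqref{hyp:stabilite1}--\eqref{hyp:stabilite2}, gives $\ell^{\, 1}(\Z) \to \ell^{\, 1}(\Z)$ and hence $\ell^{\, 2}(\Z) \to \ell^{\, 2}(\Z)$ bounds on the Toeplitz part uniform in $n$. The boundary correction kernel, controlled by the pointwise bounds of the previous paragraph together with the residue contributions from the $\underline{z}_{\, i}$'s, is summable in one index uniformly in the other, so a Schur test delivers a uniform $\ell^{\, 2} \to \ell^{\, 2}$ bound on it. Adding the two contributions yields $\sup_{n \in \N} \| \mathcal{T}^{\, n} \|_{\mathcal{H} \to \mathcal{H}} < + \infty$, which is exactly the stability of \eqref{schema-int}-\eqref{schema-bc}.
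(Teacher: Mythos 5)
Your proposal mirrors the paper's strategy almost step by step: explicit construction of the resolvent via the companion matrix $\M(z)$, the stable subspace $\E^s(z)$ and the boundary matrix $\mathcal{B}$, Dunford--Cauchy representation of $\mathcal{T}^n$, contour deformation toward the unit circle with residue extraction at the simple zeroes $\underline{z}_k$ of the Lopatinskii determinant, holomorphic passage through $z=1$ using \eqref{decomposition-Es}--\eqref{decomposition-Es1}, and a saddle-point analysis in the variable $\tau=\log z$ producing the generalized Gaussian kernel with exponent $2\mu/(2\mu-1)$. This is exactly the route taken in Sections~\ref{sect:spectral}--\ref{sect:proof}.

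The one place where you diverge is the final conversion from pointwise Green's function bounds to the $\ell^2$ operator bound. You propose to split $\mathcal{T}^n$ into the restriction of a full-line Toeplitz power (handled by Thom\'ee's $\ell^1(\Z)$ stability theorem) plus a ``boundary correction kernel'' to be controlled by a Schur test. The paper instead keeps the full temporal Green's function and applies Young's convolution inequality $\ell^1(\Z)\star\ell^2(\Z)\to\ell^2(\Z)$ directly to the decomposition $\mathcal{G}^n(j,j_0)=\sum_{k}\underline{{\bf w}}_k(j,j_0)\,{\rm e}^{n\mathbf{i}\underline{\theta}_k}+\tilde{\mathcal{G}}^n(j,j_0)$ of Proposition~\ref{prop1}, observing that $\tilde{\mathcal{G}}^n$ is dominated by an $\ell^1$ function of $j-j_0$ uniformly in $n$ while the $\underline{{\bf w}}_k$ part decays like ${\rm e}^{-c(j+j_0)}$ and hence defines a uniformly bounded finite-rank correction. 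Your alternative carries a latent difficulty: since the half-line evolution with boundary conditions \eqref{schema-bc} is not a restriction of the whole-line iteration, the ``boundary correction'' $\mathcal{G}^n(j,j_0)-g_n(j-j_0)$ is not the object on which your pointwise bounds were derived, and showing it is Schur-summable uniformly in $n$ would require separating the resolvent kernel into a Toeplitz and a boundary part before the contour deformation and redoing the $\tau$-analysis for the latter. The paper's direct Young argument sidesteps this entirely and makes the appeal to Thom\'ee's theorem unnecessary (the $\ell^2(\Z)$ bound on the full-line scheme would in any case follow from \eqref{hyp:stabilite1} and von Neumann's inequality, without Thom\'ee).
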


\noindent If the function $\Delta$ in Assumption \ref{hyp:2} does not vanish on $\Ubar$, the Uniform Kreiss-Lopatinskii Condition is said to hold 
and the main result in \cite{wu} implies that $\mathcal{T}$ is power bounded, see also \cite{kreiss1,osher1,osher2}. The novelty here is to allow 
$\Delta$ to vanish on $\cercle$. The Uniform Kreiss-Lopatinskii Condition thus breaks down. Power boundedness of $\mathcal{T}$ in this case 
was conjectured in \cite{trefethen3,kreiss-wu}.

The remainder of this article is organized as follows. The proof of Theorem \ref{thm1} follows the same strategy as in \cite{CF1}. In Section 
\ref{sect:spectral}, we clarify the location of the spectrum of $\mathcal{T}$ and give accurate bounds on the so-called \emph{spatial} Green's 
function (that is, the Green's function for the operator $z\, I-\mathcal{T}$ with $z \not \in \sigma (\mathcal{T})$). This preliminary analysis is 
used in Section \ref{sect:proof} to give an accurate description of the so-called \emph{temporal} Green's function (that is, the Green's function 
for the original problem \eqref{schema-int}-\eqref{schema-bc}). Power boundedness of $\mathcal{T}$ easily follows by classical inequalities. 
An example of operator for which Theorem \ref{thm1} applies is given in Section \ref{sect:example}.

\section{Spectral analysis}
\label{sect:spectral}

For later use, we let $\underline{z}_1,\dots,\underline{z}_K \in \cercle \setminus \{ 1 \}$ denote the pairwise distinct roots of the Lopatinskii determinant 
$\Delta$ introduced in Assumption \ref{hyp:2}. We recall that these roots are simple. We first locate the spectrum of the operator $\mathcal{T}$ and 
then give an accurate description of the so-called spatial Green's function. Precise definitions are provided below.

\subsection{A reminder on the spectrum of Toeplitz operators}

The operator $\mathcal{T}$ is a finite rank (hence compact) perturbation of the Toeplitz operator on $\ell^2(\N)$ represented by the semi-infinite matrix:
$$
\begin{pmatrix}
a_0 & \cdots & a_p & 0 & \cdots & \cdots & \cdots \\
\vdots & \ddots & & \ddots & \ddots & & \\
a_{-r} & \cdots & a_0 & \cdots & a_p & \ddots & \\
0 & \ddots & & \ddots & & \ddots & \ddots \\
\vdots & \ddots & \ddots & & \ddots & & \ddots \\
\end{pmatrix} \, .
$$
Therefore $\mathcal{T}$ shares the same essential spectrum as the Toeplitz operator \cite{Conway}. (The latter Toeplitz operator corresponds to enforcing 
the Dirichlet boundary conditions $u_{1-r}^{n+1}=\cdots=u_0^{n+1}=0$ instead of the more general form \eqref{schema-bc}). The spectrum of Toeplitz 
operators is well-known, see for instance \cite{Duren} and further developments in \cite{TE}. The resolvent set of the above Toeplitz operator consists of 
all points $z \in \C$ that do not belong to the curve \eqref{curve-spectrum} and that have index $0$ with respect to it. Moreover, any point on the curve 
\eqref{curve-spectrum} is in the essential spectrum. In the particular case we are interested in, Assumption \ref{hyp:1} implies that the essential spectrum 
of $\mathcal{T}$ is located in the set $\mathcal{C}$ defined by Lemma \ref{lem:1} and that $1$ belongs to the essential spectrum of $\mathcal{T}$. There 
remains to clarify the point spectrum of $\mathcal{T}$. The situation which we consider here and that is encoded in Assumption \ref{hyp:2} is that where 
the finite rank perturbation of the Toeplitz operator generates finitely many simple eigenvalues on the unit circle (there may also be eigenvalues within 
$\mathcal{C}$ but we are mainly concerned here with the eigenvalues of largest modulus). A precise statement is the following.

\begin{lemma}[The resolvant set]
\label{lem:3}
Let the set $\mathcal{C}$ be defined by Lemma \ref{lem:1}. Then there exists $\varepsilon>0$ such that $(\mathcal{C}^c \setminus \{ \underline{z}_1, \dots, 
\underline{z}_K \}) \cap \{ \zeta \in \C \, / \, |\zeta| \, > \, 1-\varepsilon \}$ is contained in the resolvant set of the operator $\mathcal{T}$. Moreover, each 
zero $\underline{z}_k$ of the Lopatinskii determinant is an eigenvalue of $\mathcal{T}$.
\end{lemma}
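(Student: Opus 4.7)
The plan is to reformulate the resolvent equation $(zI - \mathcal{T}) w = f$ with $f \in \mathcal{H}$ as a first-order linear recurrence in $\C^{p+r}$ governed by the companion matrix $\mathcal{M}(z)$ of \eqref{defM}, and to apply the spectral splitting of Lemma \ref{lem:2} together with the Lopatinskii condition of Assumption \ref{hyp:2}. Setting $W_j := (w_{j+p-1},\dots,w_{j-r})^\top \in \C^{p+r}$ for $j \ge 1$, the interior equation translates into $W_{j+1} = \mathcal{M}(z) \, W_j - (f_j / a_p) \, e_1$, with $e_1$ the first basis vector of $\C^{p+r}$; a direct inspection of the matrix $\mathcal{B}$ in \eqref{defB} shows that the membership constraint $w \in \mathcal{H}$ reads exactly $\mathcal{B} W_1 = 0$.

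For $z \in \mathcal{C}^c$, Lemma \ref{lem:2} provides the holomorphic splitting $\C^{p+r} = \E^s(z) \oplus \E^u(z)$ with $\mathcal{M}(z)$ strictly contracting on $\E^s(z)$ and strictly expanding on $\E^u(z)$. Writing the Duhamel formula and requiring $W_j \to 0$, which is necessary and sufficient for $w \in \ell^2$, one finds that the unstable component of $W_1$ is uniquely determined by $f$ through the geometrically convergent series
$$
\pi^u(z) \, W_1 \, = \, \sum_{k \ge 1} \, \mathcal{M}(z)^{-k} \, \pi^u(z) \bigl( (f_k/a_p) \, e_1 \bigr) \, ,
$$
while the stable component $\pi^s(z) W_1 \in \E^s(z)$ is free, subject only to the boundary relation $\mathcal{B} W_1 = 0$. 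Unique solvability in $\mathcal{H}$ is therefore equivalent to $\mathcal{B}|_{\E^s(z)} : \E^s(z) \to \C^r$ being an isomorphism, i.e., to $\Delta(z) \neq 0$. Boundedness of $(zI - \mathcal{T})^{-1}$ on $\mathcal{H}$ then follows from the open mapping theorem (or, more concretely, from the explicit formulas just written).

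To extract a uniform $\varepsilon$, I would combine three observations. By Assumption \ref{hyp:2}, $\Delta \neq 0$ on $\U \cup \{1\}$, where at $z=1$ the subspace $\E^s(1)$ is interpreted through the holomorphic extension \eqref{decomposition-Es1}; continuity of $\Delta$ then gives $\Delta \neq 0$ in a full open neighborhood of $1$. Next, on the compact set obtained from $\cercle$ by removing small open balls around $1$ and around each $\underline{z}_k$, $|\Delta|$ is bounded below by a positive constant. Finally, at each $\underline{z}_k$, simplicity of the zero together with the fact that $\underline{z}_k$ lies in the interior of $\mathcal{C}^c$ ensures that $\Delta$ vanishes only at $\underline{z}_k$ inside a small ball around that point. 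Gluing these three regions along a finite subcover of $\cercle$ yields an $\varepsilon > 0$ such that on $\{1 - \varepsilon < |z|\} \cap \mathcal{C}^c$ the function $\Delta$ vanishes only at the $\underline{z}_k$; here I use that for $\varphi \neq 0$ the boundary curve of $\mathcal{C}$ stays strictly below $\cercle$, so the thin annulus intersects $\mathcal{C}$ only near $z=1$, which is already covered by the first observation.

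For the last statement, at each $\underline{z}_k$ the operator $\mathcal{B}|_{\E^s(\underline{z}_k)}$ fails to be an isomorphism, so there is a non-zero $v \in \E^s(\underline{z}_k) \cap \ker \mathcal{B}$. Putting $W_j := \mathcal{M}(\underline{z}_k)^{j-1} \, v$ for $j \ge 1$ yields a sequence that decays geometrically in $\C^{p+r}$, and reading off its scalar components produces a non-zero $w \in \ell^2$. The identity $\mathcal{B} v = 0$ places $w$ in $\mathcal{H}$, and the recurrence gives $(\underline{z}_k I - \mathcal{T}) w = 0$, so $\underline{z}_k$ is an eigenvalue. The main delicate point in the argument is the equivalence used in step two between $w \in \ell^2$ and $W_j \to 0$: for $f$ merely in $\ell^2$, one must use that $\mathcal{M}(z)^{-1}$ is strictly contracting on $\E^u(z)$ to make sense of, and continuously control, the infinite series defining $\pi^u(z) W_1$ as a bounded linear functional of $f$.
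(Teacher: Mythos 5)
Your proposal follows essentially the same path as the paper: rewrite $(zI - \mathcal{T})w = f$ as a first-order recurrence in $\C^{p+r}$ driven by $\mathcal{M}(z)$ with boundary constraint $\mathcal{B}W_1 = 0$, use the hyperbolic dichotomy of Lemma~\ref{lem:2} to pin down the unstable component of $W_1$ from the forcing, invoke the Lopatinskii condition to solve for the stable component, and finally read off an eigenvector from a nonzero element of $\E^s(\underline{z}_k)\cap\ker\mathcal{B}$ at each simple zero of $\Delta$. The only substantive difference in presentation is that you obtain boundedness of the resolvent via the open mapping theorem rather than tracking the explicit exponential bounds that the formulas already provide, and you spell out in slightly more detail how the thin annulus $\{|z|>1-\varepsilon\}\cap\mathcal{C}^c$ is covered; both are cosmetic. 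One small imprecision worth flagging: you assert that $W_j\to 0$ is ``necessary and sufficient'' for $w\in\ell^2$. It is necessary, but not sufficient in general; what actually closes the argument is that once the unstable part of $W_1$ is fixed by the convergent series and the stable part is chosen in $\E^s(z)$, the resulting $W_j$ decays geometrically, hence lies in $\ell^2$. This is exactly what the paper uses (and what you use implicitly in the remainder of your sketch), so the logic is sound; only the phrasing overstates the equivalence.
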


\begin{proof}
The proof of Lemma \ref{lem:3} is first useful to clarify the location of the spectrum of the operator $\mathcal{T}$ and it is also useful to introduce 
some of the tools used in the construction of the spatial Green's function which we shall perform below.

Let therefore $z \in \mathcal{C}^c \setminus \{ \underline{z}_1,\dots,\underline{z}_K \}$ and let $f \in \mathcal{H}$. We are going to explain why we 
can uniquely solve the equation:
\begin{equation}
\label{resolvant}
(z \, I \, - \, \mathcal{T}) \, w \, = \, f \, ,
\end{equation}
with $w \in \mathcal{H}$ (up to assuming $|z|>1-\varepsilon$ for some sufficiently small $\varepsilon>0$). Using the definitions \eqref{defH} and \eqref{defT}, 
we wish to solve the system:
$$
\begin{cases}
z \, w_j \, - \, \sum_{\ell=-r}^p \, a_\ell \, w_{j+\ell} \, = \, f_j \, , & j \ge 1 \, ,\\
w_\nu \, = \, \sum_{\ell=1}^{p_b} \, b_{\ell,\nu} \, w_\ell \, ,& \nu \, = \, 1-r,\dots,0 \, .
\end{cases}
$$
We introduce, for any $j \ge 1$, the augmented vector:
$$
W_j \, := \, \begin{bmatrix}
\, w_{j+p-1} \, \\
\vdots \\
\, w_{j-r} \, \end{bmatrix} \in \C^{p+r} \, ,
$$
which must satisfy the problem\footnote{This is the place where we use the assumption $p_b \le p$ in order to rewrite the numerical boundary conditions 
as a linear constraint on the first element $W_1$ of the sequence $(W_j)_{j \ge 1}$. The case $p_b>p$ can be dealt with quite similarly but is just heavier 
in terms of notation.}:
\begin{equation}
\label{dyn-spatiale}
\begin{cases}
W_{j+1} \, = \, \M(z) \, W_j -a_p^{-1} \, f_j \, {\bf e} \, ,& j \ge 1 \, ,\\
\mathcal{B} \, W_1 \, = \, 0 \, ,& 
\end{cases}
\end{equation}
where we have used the notation ${\bf e}$ to denote the first vector of the canonical basis of $\C^{p+r}$, namely:
$$
{\bf e} \, := \, \begin{bmatrix}
\, 1 \, \\
\, 0 \, \\
\vdots \\
\, 0 \, \end{bmatrix} \in \C^{p+r} \, .
$$

Our goal now is to solve the spatial dynamical problem \eqref{dyn-spatiale}. Since $z \in \mathcal{C}^c$, we know that $\M(z)$ enjoys a hyperbolic dichotomy 
between its unstable and stable eigenvalues. We first solve for the unstable components of the sequence $(W_j)_{j \ge 1}$ by integrating from $+\infty$ to any 
integer $j \ge 1$, which gives:
\begin{equation}
\label{composanteinstablej}
\forall \, j \ge 1 \, ,\quad \pi^u(z) \, W_j \, := \, a_p^{-1} \, \sum_{\ell \ge 0} \, f_{j+\ell} \, \M(z)^{-1-\ell} \, \pi^u(z) \, {\bf e} \, .
\end{equation}
In particular, we get the ``initial value'':
\begin{equation}
\label{composanteinstable1}
\pi^u(z) \, W_1 \, := \, a_p^{-1} \, \sum_{\ell \ge 0} \, f_{1+\ell} \, \M(z)^{-1-\ell} \, \pi^u(z) \, {\bf e} \, .
\end{equation}

The initial value for the stable components is obtained by using Assumption \ref{hyp:2}. Namely, if $z \in \U$, we know that the linear operator 
$\mathcal{B}|_{\E^s(z)}$ is an isomorphism, and this property remains true near every point of $\cercle \setminus \{ 1 \}$ except at the $\underline{z}_k$'s. 
We can thus find some $\varepsilon>0$ such that, for any $z \in \mathcal{C}^c \setminus \{ \underline{z}_1,\dots,\underline{z}_K \}$ verifying 
$|z|>1-\varepsilon$, the linear operator $\mathcal{B}|_{\E^s(z)}$ is an isomorphism. For such $z$'s, we can therefore define the vector $\pi^s(z) 
\, W_1 \in \E^s(z)$ through the formula:
\begin{equation}
\label{composantestable1}
\pi^s(z) \, W_1 \, := -\, a_p^{-1} \, \Big( \mathcal{B}|_{\E^s(z)} \Big)^{-1} \, \mathcal{B} \, \sum_{\ell \ge 0} \, f_{1+\ell} \, \M(z)^{-1-\ell} \, \pi^u(z) \, {\bf e} \, ,
\end{equation}
which is the only way to obtain both the linear constraint $\mathcal{B} \, W_1=0$ and the decomposition $W_1=\pi^s(z) \, W_1+\pi^u(z) \, W_1$ in 
agreement with \eqref{composanteinstable1}. Once we have determined the stable components $\pi^s(z) \, W_1$ of the initial value $W_1$, the only 
possible way to solve \eqref{dyn-spatiale} for the stable components is to set:
\begin{equation}
\label{composantestablej}
\forall \, j \ge 1 \, ,\quad \pi^s(z) \, W_j \, := \, \M(z)^{j-1} \, \pi^s(z) \, W_1 \, - \, a_p^{-1} \, \sum_{\ell = 1}^{j-1} \, f_{\ell} \, \M(z)^{j-1-\ell} \, \pi^s(z) \, {\bf e} \, .
\end{equation}

Since the sequences $(\M(z)^{-\ell} \, \pi^u(z))_{\ell \ge 1}$ and $(\M(z)^{\ell} \, \pi^s(z))_{\ell \ge 1}$ are exponentially decreasing, we can define a 
solution $(W_j)_{j \ge 1} \in \ell^2$ to \eqref{dyn-spatiale} by decomposing along the stable and unstable components and using the defining equations 
\eqref{composanteinstablej} and \eqref{composantestablej}. This provides us with a solution $w \in \mathcal{H}$ to the equation \eqref{resolvant} by 
going back to the scalar components of each vector $W_j$. Such a solution is necessarily unique since if $w \in \mathcal{H}$ is a solution to 
\eqref{resolvant} with $f=0$, then the augmented vectorial sequence $(W_j)_{j \ge 1} \in \ell^2$ satisfies:
$$
\begin{cases}
W_{j+1} \, = \, \M(z) \, W_j \, ,& j \ge 1 \, ,\\
\mathcal{B} \, W_1 \, = \, 0 \, .& 
\end{cases}
$$
This means that the vector $W_1$ belongs to $\E^s(z)$ and to the kernel of the matrix $\mathcal{B}$, and therefore vanishes. Hence the whole sequence 
$(W_j)_{j \ge 1} \in \ell^2$ vanishes. We have thus shown that $z$ belongs to the resolvant set of $\mathcal{T}$.

The fact that each $\underline{z}_k$ is an eigenvalue of $\mathcal{T}$ follows from similar arguments. At a point $\underline{z}_k$, the intersection 
$\text{\rm Ker } \mathcal{B} \, \cap \, \E^s (\underline{z}_k)$ is not trivial, so we can find a nonzero vector $\underline{W}_1 \in \text{\rm Ker } \mathcal{B}$ 
for which the sequence $(\underline{W}_j)_{j \ge 1}$ defined by:
$$
\forall \, j \ge 1 \, ,\quad \underline{W}_{j+1} \, := \, \M(\underline{z}_k) \, \underline{W}_j \, ,
$$
is square integrable (it is even exponentially decreasing). Going back to scalar components, this provides with a nonzero solution to the eigenvalue problem:
$$
\begin{cases}
\underline{z}_k \, w_j \, - \, \sum_{\ell=-r}^p \, a_\ell \, w_{j+\ell} \, = \, 0 \, , & j \ge 1 \, ,\\
w_\nu \, = \, \sum_{\ell=1}^{p_b} \, b_{\ell,\nu} \, w_\ell \, ,& \nu \, = \, 1-r,\dots,0 \, .
\end{cases}
$$
The proof of Lemma \ref{lem:3} is complete.
\end{proof}

We are now going to define and analyze the so-called spatial Green's function. The main point, as in \cite{ZH98,godillon,CF1} and related works, is to be 
able to ``pass through'' the essential spectrum close to $1$ and extend the spatial Green's function holomorphically to a whole neighborhood of $1$. This 
was already achieved with accurate bounds in \cite{CF1} on the whole line $\Z$ (with no numerical boundary condition) and we apply similar arguments 
here, while adding the difficulty of the eigenvalues on $\cercle$. Near all such eigenvalues, we isolate the precise form of the singularity in the Green's 
function and show that the remainder admits a holomorphic extension at the eigenvalue. All these arguments are made precise in the following paragraph.

\subsection{The spatial Green's function}

For any $j_0 \ge 1$, we let $\boldsymbol{\delta}_{j_0}$ denote the only element of the space $\mathcal{H}$ in \eqref{defH} that satisfies:
$$
\forall \, j \, \ge \, 1 \, ,\quad \big( \boldsymbol{\delta}_{j_0} \big)_j \, := \, \begin{cases}
1 \, ,& \text{\rm if $j \, = \, j_0$,} \\
0 \, ,& \text{\rm otherwise.}
\end{cases}
$$
The boundary values of $\boldsymbol{\delta}_{j_0}$ are defined accordingly. Then as long as $z$ belongs to the resolvant set of the operator $\mathcal{T}$, 
the spatial Green's function, which we denote $G_z (\cdot,\cdot)$ is defined by the relation:
\begin{equation}
\label{defGzj}
\forall \, j_0 \, \ge \, 1 \, ,\quad \big( z \, I \, - \, \mathcal{T} \big) \, G_z (\cdot,j_0) \, = \, \boldsymbol{\delta}_{j_0} \, ,
\end{equation}
together with the numerical boundary conditions $G_z (\cdot,j_0) \in \mathcal{H}$. We give below an accurate description of $G_z$ in order to later obtain 
an accurate description of the temporal Green's function, that is obtained by applying the iteration \eqref{schema-int}-\eqref{schema-bc} to the initial condition 
$\boldsymbol{\delta}_{j_0}$. The analysis of the spatial Green's function splits between three cases:
\begin{itemize}
 \item The behavior near regular points (away from the spectrum of $\mathcal{T}$),
 \item The behavior near the point $1$ (the only point where the essential spectrum of $\mathcal{T}$ meets $\cercle$),
 \item The behavior near the eigenvalues $\underline{z}_1,\dots,\underline{z}_K$.
\end{itemize}
Let us start with the easiest case.

\begin{lemma}[Bounds away from the spectrum]
\label{lem:4}
Let $\underline{z} \in \Ubar \setminus \{ 1,\underline{z}_1,\dots,\underline{z}_K \}$. Then there exists an open ball $B_r(\underline{z})$ centered at 
$\underline{z}$ and there exist two constants $C>0$, $c>0$ such that, for any couple of integers $j,j_0 \ge 1$, there holds:
$$
\forall \, z \in B_r(\underline{z}) \, ,\quad \big| \, G_z(j,j_0) \, \big| \, \le \, C \, \exp \, \big( - \, c \, |j \, - \, j_0| \, \big) \, .
$$
\end{lemma}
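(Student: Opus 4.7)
My plan is to directly exploit the resolvent construction carried out in the proof of Lemma \ref{lem:3} and specialize it to the right-hand side $f = \boldsymbol{\delta}_{j_0}$. First I would argue that $\underline{z}$ lies in the resolvent set of $\mathcal{T}$: if $\underline{z} \in \U$ this is part of Lemma \ref{lem:3}; if $|\underline{z}|=1$ and $\underline{z} \neq 1, \underline{z}_1, \dots, \underline{z}_K$, then $\underline{z} \in \mathcal{C}^c$ since Lemma \ref{lem:1} forces $\mathcal{C}\cap\cercle=\{1\}$, and again Lemma \ref{lem:3} applies. The resolvent set is open, so there exists $r>0$ such that on $B_r(\underline{z})$ we have, uniformly: (i) a hyperbolic dichotomy for $\M(z)$, giving $\|\M(z)^{\ell}\pi^s(z)\|\le C\,{\rm e}^{-c\,\ell}$ and $\|\M(z)^{-\ell}\pi^u(z)\|\le C\,{\rm e}^{-c\,\ell}$ for $\ell\ge 0$, and (ii) invertibility of $\mathcal{B}|_{\E^s(z)}$ with a uniform bound on the inverse. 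Both (i) and (ii) follow from the holomorphic dependence of $\pi^{s,u}(z)$ and $\E^s(z)$ on $z$ in $\mathcal{C}^c$, together with the assumption that $\underline{z}$ is not one of the $\underline{z}_k$.

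Next I would plug $f_\ell = \delta_{\ell,j_0}$ into the three defining formulas \eqref{composanteinstablej}, \eqref{composantestable1}, \eqref{composantestablej} of the proof of Lemma \ref{lem:3}. The infinite sums collapse to a single term and give, for the vectorial resolvent sequence $W_j$:
\begin{align*}
\pi^u(z)\,W_j &= \begin{cases} a_p^{-1}\,\M(z)^{-1-(j_0-j)}\,\pi^u(z)\,{\bf e}, & 1\le j\le j_0,\\ 0, & j>j_0,\end{cases}\\
\pi^s(z)\,W_1 &= -\,a_p^{-1}\,\bigl(\mathcal{B}|_{\E^s(z)}\bigr)^{-1}\,\mathcal{B}\,\M(z)^{-j_0}\,\pi^u(z)\,{\bf e},\\
\pi^s(z)\,W_j &= \M(z)^{j-1}\pi^s(z)\,W_1 \;-\;a_p^{-1}\,\mathbf{1}_{\{j>j_0\}}\,\M(z)^{j-1-j_0}\,\pi^s(z)\,{\bf e}.
\end{align*}
Since $G_z(j,j_0)$ is a fixed scalar component of $W_j$, it suffices to bound $|W_j|$ in $\C^{p+r}$.

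Then I would apply the uniform dichotomy estimates termwise. For $j\le j_0$ the unstable piece is bounded by $C\,{\rm e}^{-c(j_0-j)}$, and the stable piece is bounded by
\begin{equation*}
C\,{\rm e}^{-c(j-1)}\,\|\pi^s(z)\,W_1\| \;\le\; C'\,{\rm e}^{-c(j-1)}\,{\rm e}^{-c\,j_0} \;\le\; C'\,{\rm e}^{-c|j-j_0|},
\end{equation*}
using that $(j-1)+j_0\ge|j-j_0|$. For $j>j_0$ the stable piece is bounded by the sum of $C\,{\rm e}^{-c(j-1)-c\,j_0}\le C\,{\rm e}^{-c|j-j_0|}$ and $C\,{\rm e}^{-c(j-1-j_0)}$, which again gives the desired estimate. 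Assembling the two cases yields the pointwise bound stated in the lemma, with constants independent of $z\in B_r(\underline{z})$ and of $j,j_0\ge 1$.

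There is no real obstacle in this case: the main point one has to be careful about is the uniformity of the decay rate $c$ and of the constant $C$ as $z$ varies in $B_r(\underline{z})$, but this is automatic from the compactness of $\overline{B_{r/2}(\underline{z})}$ together with the holomorphy (hence continuity) of $\pi^{s,u}(z)$, of the spectral separation of $\M(z)$, and of $\bigl(\mathcal{B}|_{\E^s(z)}\bigr)^{-1}$. The genuine difficulties — a holomorphic extension of $G_z$ across the essential spectrum near $z=1$ and the isolation of the simple poles at the $\underline{z}_k$ — are deferred to the subsequent lemmas and are not present here.
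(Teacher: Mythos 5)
Your proof is correct and follows essentially the same route as the paper's: you specialize the resolvent construction of Lemma~\ref{lem:3} to $f=\boldsymbol{\delta}_{j_0}$, obtain the same explicit formulas for the unstable components, the stable initial value $\pi^s(z)W_1$, and the stable components, and then combine the uniform hyperbolic-dichotomy bounds on a small closed ball $\overline{B_r(\underline{z})}\subset\mathcal{C}^c$ on which $\mathcal{B}|_{\E^s(z)}$ is uniformly invertible. The paper's proof does exactly this (formulas \eqref{composanteinstablej}, \eqref{composantestable1}, \eqref{composantestablej} and estimates \eqref{borneslem4-1}, \eqref{borneslem4-2}), so no further comment is needed.
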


\begin{proof}
Almost all ingredients have already been set in the proof of Lemma \ref{lem:3}. Let therefore $\underline{z} \in \Ubar \setminus \{ 1,\underline{z}_1, \dots, 
\underline{z}_K \}$, and let us first fix $r>0$ small enough such that the closed ball $\overline{B_r(\underline{z})}$ is contained both in $\mathcal{C}^c$ 
and in the resolvant set of $\mathcal{T}$. All complex numbers $z$ below are assumed to lie within $\overline{B_r(\underline{z})}$. Then the problem 
\eqref{defGzj} can be recast under the vectorial form \eqref{dyn-spatiale} with:
$$
\forall \, j \ge 1 \, ,\quad f_j \, := \, \begin{cases}
1 \, ,& \text{\rm if $j \, = \, j_0$,} \\
0 \, ,& \text{\rm otherwise.}
\end{cases}
$$
Let us therefore consider the spatial dynamics problem \eqref{dyn-spatiale} with the above Dirac mass type source term. The unstable components of the 
sequence $(W_j)_{j \ge 1}$ solution to \eqref{dyn-spatiale} are given by \eqref{composanteinstablej}, which gives here:
$$
\forall \, j \ge 1 \, ,\quad \pi^u (z) \, W_j \, = \, \begin{cases}
0  \, ,& \text{\rm if $j \, > \, j_0$,} \\
a_p^{-1} \, \M(z)^{-(j_0+1-j)} \, \pi^u(z) \, {\bf e} \, ,& \text{\rm if $1 \, \le \, j \, \le \, j_0$.}
\end{cases}
$$
In particular, we get the following uniform bounds with respect to $z,j,j_0$:
\begin{equation}
\label{borneslem4-1}
\forall \, z \in \overline{B_r(\underline{z})} \, ,\quad \forall \, j \ge 1 \, ,\quad \Big| \pi^u (z) \, W_j \Big| \, \le \, \begin{cases}
0  \, ,& \text{\rm if $j \, > \, j_0$,} \\
C \, \exp (- \, c \, (j_0-j)) \, ,& \text{\rm if $1 \, \le \, j \, \le \, j_0$.}
\end{cases}
\end{equation}

The initial value $\pi^s (z) \, W_1$ of the stable components is then obtained by the relation \eqref{composantestable1}, which immediately gives the 
bound\footnote{Here we use the fact that the linear map $\mathcal{B}|_{\E^s(z)}$ is an isomorphism for all $z \in \overline{B_r(\underline{z})}$.}:
$$
\Big| \pi^s (z) \, W_1 \Big| \, \le \, C \, \exp (- \, c \, j_0) \, .
$$
The stable components are then determined for any integer $j \ge 1$ by the general formula \eqref{composantestablej}, which gives here:
$$
\forall \, j \ge 1 \, ,\quad \pi^s (z) \, W_j \, = \, \begin{cases}
\M(z)^{j-1} \, \pi^s(z) \, W_1 \, ,& \text{\rm if $1 \, \le \, j \, \le \, j_0$,} \\
\M(z)^{j-1} \, \pi^s(z) \, W_1 -a_p^{-1} \, \M(z)^{j-j_0-1} \, \pi^s(z) \, {\bf e} \, ,& \text{\rm if $j \, > \, j_0$.}
\end{cases}
$$
By using the exponential decay of the sequence $(\M(z)^{\, j} \, \pi^s(z))_{j \ge 1}$, we get the following bounds for the stable components:
\begin{equation}
\label{borneslem4-2}
\forall \, j \ge 1 \, ,\quad \Big| \pi^s (z) \, W_j \Big| \, \le \, \begin{cases}
C \, \exp (- \, c \, (j_0+j)) \, ,& \text{\rm if $1 \, \le \, j \, \le \, j_0$,} \\
C \, \exp (- \, c \, (j_0+j)) \, + \, C \, \exp (- \, c \, (j-j_0)) \, ,& \text{\rm if $j \, > \, j_0$.}
\end{cases}
\end{equation}

Adding \eqref{borneslem4-1} with \eqref{borneslem4-2}, and examining for each situation which among the terms is the largest, we get the conclusion 
of Lemma \ref{lem:4} (recall that the scalar component $G_z(j,j_0)$ is just one among the coordinates of the vector $W_j$ considered above).
\end{proof}

We are now going to examine the behavior of the spatial Green's function $G_z$ close to $1$. Let us first recall that the exterior $\U$ of the unit 
disk belongs to the resolvant set of $\mathcal{T}$. Hence, for any $j_0 \ge 1$, the sequence $G_z(\cdot,j_0)$ is well-defined in $\mathcal{H}$ 
for $z \in \U$. Lemma \ref{lem:5} below shows that each individual sequence $G_z(j,j_0)$ can be holomorphically extended to a whole 
neighborhood of $1$.

\begin{lemma}[Bounds close to $1$]
\label{lem:5}
There exists an open ball $B_\varepsilon(1)$ centered at $1$ and there exist two constants $C_1>0$ and $c_1>0$ such that, for any couple of integers 
$(j,j_0)$, the component $G_z(j,j_0)$ defined on $B_\varepsilon(1)\cap \U$ extends holomorphically to the whole ball $B_\varepsilon(1)$ with respect to 
$z$, and the holomorphic extension satisfies the bound:
$$
\forall \, z \in B_\varepsilon(1) \, ,\quad \big| \, G_z(j,j_0) \, \big| \, \le \, \begin{cases}
C_1 \, \exp \, \big( -c_1 \, |j \, - \, j_0| \, \big) \, ,& \text{\rm if $1 \, \le \, j \, \le \, j_0$,} \\
C_1 \, \Big| \kappa(z) \Big|^{|j \, - \, j_0|} \, ,& \text{\rm if $j \, > \, j_0$,}
\end{cases}
$$
where $\kappa(z)$ denotes the (unique) holomorphic eigenvalue of $\M(z)$ that satisfies $\kappa(1)=1$.
\end{lemma}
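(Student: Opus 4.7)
The plan is to revisit the spatial-dynamics construction of the resolvent carried out in the proof of Lemma~\ref{lem:3}, specialize it to the Dirac source $f = \boldsymbol{\delta}_{j_0}$, and then observe that every ingredient extends holomorphically through $z = 1$. With $f_j = \delta_{j,j_0}$, formulas \eqref{composanteinstablej}, \eqref{composantestable1} and \eqref{composantestablej} collapse to closed-form expressions: the unstable component of $W_j$ is $a_p^{-1} \, \M(z)^{-(j_0+1-j)} \, \pi^u(z) \, {\bf e}$ for $1 \le j \le j_0$ and $0$ otherwise; the stable initial value $\pi^s(z) \, W_1$ equals $-\, a_p^{-1} \, (\mathcal{B}|_{\E^s(z)})^{-1} \, \mathcal{B} \, \M(z)^{-j_0} \, \pi^u(z) \, {\bf e}$; and for $j \ge 1$ one has $\pi^s(z) \, W_j = \M(z)^{j-1} \, \pi^s(z) \, W_1 - a_p^{-1} \, \mathbf{1}_{j > j_0} \, \M(z)^{\,j-1-j_0} \, \pi^s(z) \, {\bf e}$. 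Reading off the scalar $G_z(j,j_0)$ as the appropriate coordinate of $W_j$ reduces the lemma to estimating these objects.

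I would then verify that every object above extends holomorphically to a ball $B_\varepsilon(1)$. The matrix $\M(z)$ is polynomial; $\pi^s(z)$ and $\pi^u(z) = I - \pi^s(z)$ extend through $z = 1$ via the holomorphic extension discussed around \eqref{decomposition-Es1} (equivalently, via a Dunford contour around the unstable eigenvalues of $\M(1)$); and $\mathcal{B}|_{\E^s(1)}$ is invertible by Assumption~\ref{hyp:2}, hence its inverse extends holomorphically near $1$. To isolate the singular piece I would further split $\pi^s(z) = \pi^{ss}(z) + \pi^c(z)$, with $\pi^c(z)$ the rank-one projector onto $\text{Span } E(z)$, on which $\M(z)$ acts as multiplication by $\kappa(z)$, while on $\E^{ss}(z)$ the eigenvalues of $\M(z)$ lie uniformly strictly inside $\D$. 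After shrinking $\varepsilon$, this provides uniform bounds $\| \M(z)^{-k} \, \pi^u(z) \|, \, \| \M(z)^{k} \, \pi^{ss}(z) \| \le C \, e^{-c \, k}$ for a fixed $c > 0$, together with $\| \pi^c(z) \| \le C$ and $|\kappa(z)| \in [ e^{-c/2} , e^{c/2} ]$.

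The bound then follows by combining the three contributions (unstable, strongly stable, central). In the regime $1 \le j \le j_0$ the unstable part is $\le C \, e^{-c \, (j_0-j)}$; the strongly stable part is $\le C \, e^{-c \, (j-1)} \, e^{-c \, j_0} \le C \, e^{-c \, (j_0-j)}$; and the central part is $C \, |\kappa(z)|^{j-1} \, e^{-c \, j_0}$, which is $\le C_1 \, e^{-c_1 (j_0-j)}$ thanks to the two-sided bound on $|\kappa(z)|$. In the regime $j > j_0$ the unstable part vanishes; the strongly stable part is $\le C \, e^{-c \, (j-j_0)} \le C_1 \, |\kappa(z)|^{j-j_0}$ (using $|\kappa(z)| \ge e^{-c/2}$); and the two central contributions $|\kappa(z)|^{j-1} \, e^{-c \, j_0}$ and $|\kappa(z)|^{\, j - 1 - j_0}$ are both controlled by $C_1 \, |\kappa(z)|^{j-j_0}$ after using $|\kappa(z)|^{j_0-1} \, e^{-c \, j_0} \le C$.

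The main obstacle is the extension step: the operator-valued resolvent $(zI - \mathcal{T})^{-1}$ genuinely fails to extend past the essential-spectrum point $z = 1$, but each scalar entry $G_z(j, j_0)$ does, and this extension must be built directly from the holomorphically extended spectral objects of $\M(z)$ rather than from the resolvent itself — the decomposition \eqref{decomposition-Es1} is what makes this possible. A secondary subtlety is the replacement of the uniform exponential bound of Lemma~\ref{lem:4} by a $|\kappa(z)|^{|j-j_0|}$ factor for $j > j_0$: this reflects the fact that $\kappa(z)$ may leave $\D$ as $z$ moves inside $\mathcal{C}$, and preserving this exact dependence is essential for the later inverse-Laplace analysis of the temporal Green's function.
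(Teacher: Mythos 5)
Your proposal follows essentially the same route as the paper: you specialize the spatial-dynamics construction to the Dirac source, split the stable projector into the strongly stable part $\pi^{ss}(z)$ and the rank-one central part along $E(z)$ (on which $\M(z)$ acts as $\kappa(z)$), observe that each piece extends holomorphically through $z=1$, and combine the resulting bounds in the two regimes $j \le j_0$ and $j > j_0$. The exponent bookkeeping using the two-sided bound $|\kappa(z)| \in [e^{-c/2}, e^{c/2}]$ is carried out correctly and matches the paper's argument.
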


\begin{proof}
Most ingredients of the proof are similar to what we have already done in the proof of Lemma \ref{lem:4}. The novelty is that there is one stable 
component which behaves more and more singularly as $z \in \U$ gets close to $1$ since one stable eigenvalue, namely $\kappa(z)$, gets close 
to $\cercle$ (its exponential decay is thus weaker and weaker). We thus recall that on some suitably small neighborhood $B_\varepsilon(1)$ of $1$, 
we have the (holomorphic in $z$) decomposition:
$$
\C^{p+r} \, = \, \E^u (z) \, \oplus \, \E^{ss}(z) \, \oplus \, \text{\rm Span } E(z) \, ,
$$
where all the above spaces are invariant by $\M(z)$, the spectrum of $\M(z)$ restricted to $\E^u (z)$ lies in $\U$, the spectrum of $\M(z)$ restricted 
to $\E^{ss} (z)$ lies in $\D$, and $E(z)$ is an eigenvector for $\M(z)$ associated with the eigenvalue $\kappa(z)$. With obvious notation, we use the 
corresponding decomposition:
$$
X \, = \, \pi^u(z) \, X \, + \, \pi^{ss}(z) \, X \, + \, \mu \, E(z) \, .
$$

Let us from now on consider some complex number $z \in B_\varepsilon(1) \cap \U$ so that the Green's function $G_z(\cdot,j_0)$ is well-defined in 
$\mathcal{H}$ for any $j_0 \ge 1$. As in the proof of Lemma \ref{lem:4}, the Green's function is defined by solving the spatial dynamics problem 
\eqref{dyn-spatiale} with the Dirac mass datum:
$$
\forall \, j \ge 1 \, ,\quad f_j \, := \, \begin{cases}
1 \, ,& \text{\rm if $j \, = \, j_0$,} \\
0 \, ,& \text{\rm otherwise.}
\end{cases}
$$
The unstable components are uniquely determined by:
$$
\forall \, j \ge 1 \, ,\quad \pi^u (z) \, W_j \, = \, \begin{cases}
0  \, ,& \text{\rm if $j \, > \, j_0$,} \\
a_p^{-1} \, \M(z)^{-(j_0+1-j)} \, \pi^u(z) \, {\bf e} \, ,& \text{\rm if $1 \, \le \, j \, \le \, j_0$,}
\end{cases}
$$
and we readily observe that the latter right hand side depends holomorphically on $z$ in the whole ball $B_\varepsilon(1)$ and not only in 
$B_\varepsilon(1) \cap \U$. This already allows to extend the unstable components $\pi^u (z) \, W_j$ to $B_\varepsilon(1)$, with the corresponding 
uniform bound similar to \eqref{borneslem4-1}, that is:
\begin{equation}
\label{borneslem5-1}
\forall \, z \in B_\varepsilon(1) \, ,\quad \forall \, j \ge 1 \, ,\quad \Big| \pi^u (z) \, W_j \Big| \, \le \, \begin{cases}
0  \, ,& \text{\rm if $j \, > \, j_0$,} \\
C \, \exp (- \, c \, (j_0-j)) \, ,& \text{\rm if $1 \, \le \, j \, \le \, j_0$.}
\end{cases}
\end{equation}

We can then use the fact that $\mathcal{B}|_{\E^s(1)}$ is an isomorphism from $\E^s(1)$ to $\C^r$, which implies that, up to restricting the radius 
$\varepsilon$, the matrix $\mathcal{B}$ restricted to the holomorphically extended stable subspace:
$$
\E^{ss} (z) \, \oplus \, \text{\rm Span } E(z) \, ,
$$
is an isomorphism. We can thus uniquely determine some vector $\pi^{ss}(z) \, W_1 \in \E^{ss}(z)$ and a scalar $\mu_1$ such that:
$$
\mathcal{B} \, \Big( \pi^{ss}(z) \, W_1 \, + \, \mu_1 \, E(z) \Big) \, = -\, a_p^{-1} \, \mathcal{B}  \, \M(z)^{-j_0} \, \pi^u(z) \, {\bf e} \, .
$$
In particular, we have the bound:
$$
\forall \, z \in B_\varepsilon(1) \, ,\quad  \Big| \pi^{ss} (z) \, W_1 \Big| \, + \, |\mu_1| \, \le \, C \, \exp (- \, c \, j_0) \, .
$$

For $z \in B_\varepsilon(1) \cap \U$, the strongly stable components of $(W_j)_{j \ge 1}$ are then defined by the formula:
$$
\forall \, j \ge 1 \, ,\quad \pi^{ss} (z) \, W_j \, = \, \begin{cases}
\M(z)^{j-1} \, \pi^{ss}(z) \, W_1 \, ,& \text{\rm if $1 \, \le \, j \, \le \, j_0$,} \\
\M(z)^{j-1} \, \pi^{ss}(z) \, W_1 \, - \, a_p^{-1} \, \M(z)^{j-j_0-1} \, \pi^{ss}(z) \, {\bf e} \, ,& \text{\rm if $j \, > \, j_0$,}
\end{cases}
$$
and the coordinate of $(W_j)_{j \ge 1}$ along the eigenvector $E(z)$ is defined by the formula:
$$
\forall \, j \ge 1 \, ,\quad \mu_j \, = \, \begin{cases}
\kappa (z)^{j-1} \, \mu_1 \, ,& \text{\rm if $1 \, \le \, j \, \le \, j_0$,} \\
\kappa (z)^{j-1} \, \mu_1 \, - \, a_p^{-1} \, \kappa (z)^{j-j_0-1} \, \mu({\bf e}) \, ,& \text{\rm if $j \, > \, j_0$.}
\end{cases}
$$
As for the unstable components, we observe that for each couple of integers $j,j_0$, the above components of $W_j$ extend holomorphically 
to the whole ball $B_\varepsilon(1)$ since the spectral projectors of $\M(z)$ along $\E^{ss}(z)$ and $\text{\rm Span } E(z)$ do so. We thus 
consider from now on the holomorphic extension of $W_j$ for $z \in B_\varepsilon (1)$ and collect the three pieces of the vector $W_j$. For 
$1 \le j \le j_0$, we have:
$$
W_j \, = \, a_p^{-1} \, \M(z)^{-(j_0+1-j)} \, \pi^u(z) \, {\bf e} \, + \, \M(z)^{j-1} \, \pi^{ss}(z) \, W_1 \, + \, \kappa (z)^{j-1} \, \mu_1 \, E(z) \, ,
$$
which satisfies the bound:
$$
\Big| W_j \Big| \, \le \, C \, \exp (-c\, (j_0 \, - \, j)) \, + \, C \, \exp (-c\, (j_0 \, + \, j)) \, + \, C \, |\kappa (z)|^{\, j} \, \exp (- \, c \, j_0) \, ,
$$
for some constants $C>0$ and $c>0$ that are uniform with respect to $z$. Since $\kappa(1)=1$, we can always assume that there holds 
$|\kappa(z)| \le \exp c$ on the ball $B_\varepsilon (1)$, and we are then left with the estimate:
$$
\Big| W_j \Big| \, \le \, C \, \exp (-c\, (j_0 \, - \, j)) \, ,
$$
as claimed in the statement of Lemma \ref{lem:5}.

It remains to examine the case $j>j_0$ for which we have the decomposition:
$$
W_j \, = \, \M(z)^{j-1} \, \pi^{ss}(z) \, W_1 \, - \, a_p^{-1} \, \M(z)^{j-j_0-1} \, \pi^{ss}(z) \, {\bf e} \, + \, \kappa (z)^{j-1} \, \mu_1 \, E(z) \, 
- \, a_p^{-1} \, \kappa (z)^{j-j_0-1} \, \mu({\bf e}) \, E(z) \, ,
$$
and we can thus derive the bound:
$$
\Big| W_j \Big| \, \le \, C \, \exp (-c\, (j_0 \, + \, j)) \, + \, C \, \exp (-c\, (j \, - \, j_0)) \, + \, C \, |\kappa (z)|^{\, j} \, \exp (- \, c \, j_0) 
\, + \, C \, |\kappa (z)|^{|j \, - \, j_0|} \, .
$$
Since we can always assume that the ball $B_\varepsilon(1)$ is so small that $|\kappa(z)|$ takes its values within the interval $[\exp (-c),\exp c]$, 
it appears that the largest term on the above right hand side is the last term, which completes the proof of Lemma \ref{lem:5}.
\end{proof}

Let us observe that we can extend holomorphically each scalar component $G_z(j,j_0)$ but that does not mean that we can extend holomorphically 
$G_z(\cdot,j_0)$ in $\mathcal{H}$. As a matter of fact, the eigenvalue $\kappa(z)$ starts contributing to the unstable subspace of $\M(z)$ as $z$ 
(close to $1$) crosses the curve \eqref{curve-spectrum}. The holomorphic extension $G_z(\cdot,j_0)$ then ceases to be in $\ell^2$ for it has an 
exponentially growing mode in $j$. The last case to examine is that of the neighborhood of each eigenvalue $\underline{z}_k$.

\begin{lemma}[Bounds close to the eigenvalues]
\label{lem:6}
For any eigenvalue $\underline{z}_k \in \cercle$ of $\mathcal{T}$, there exists an open ball $B_\varepsilon(\underline{z}_k)$ centered at $\underline{z}_k$, 
there exists a sequence $(\underline{w}_k(j,j_0))_{j,j_0 \ge 1}$ with $\underline{w}_k(\cdot,j_0) \in \mathcal{H}$ for all $j_0 \ge 1$, and there exist two 
constants $\underline{C}_k>0$ and $\underline{c}_k>0$ such that for any couple of integers $(j,j_0)$, the component $G_z(j,j_0)$ defined on 
$B_\varepsilon(\underline{z}_k) \setminus \{ \underline{z}_k \}$ is such that:
$$
R_z(j,j_0) \, := \, G_z(j,j_0) \, - \, \dfrac{\underline{w}_k(j,j_0)}{z-\underline{z}_k} \, ,
$$
extends holomorphically to the whole ball $B_\varepsilon(\underline{z}_k)$ with respect to $z$, and the holomorphic extension satisfies the bound:
$$
\forall \, z \in B_\varepsilon(\underline{z}_k) \, ,\quad \big| \, R_z(j,j_0) \, \big| \, \le \, \underline{C}_k \, \exp \, \big( -\underline{c}_k \, |j \, - \, j_0| \, \big) \, .
$$
Moreover, the sequence $(\underline{w}_k(j,j_0))_{j,j_0 \ge 1}$ satisfies the pointwise bound:
$$
\forall \, j,j_0 \ge 1 \, ,\quad \big| \, \underline{w}_k(j,j_0) \, \big| \, \le \, \underline{C}_k \, \exp \, \big( \, - \, \underline{c}_k \, (j \, + \, j_0) \, \big) \, .
$$
\end{lemma}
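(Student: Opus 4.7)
The plan is to run the same spatial-dynamics construction as in the proofs of Lemmas \ref{lem:3} and \ref{lem:4}, with the Dirac source at $j_0$, but now to track carefully how the inverse of $\mathcal{B}|_{\E^s(z)}$ degenerates at $\underline{z}_k$. Since $\underline{z}_k \in \cercle \setminus \{1\}$ lies in $\mathcal{C}^c$, both subspaces $\E^s(z), \E^u(z)$ and their spectral projectors extend holomorphically across $\underline{z}_k$, and the hyperbolic dichotomy of $\M(z)$ is uniform on a sufficiently small ball $B_\varepsilon(\underline{z}_k)$. Consequently, the unstable component $\pi^u(z) W_j$ given by \eqref{composanteinstablej} is already holomorphic on the whole ball and satisfies the bound \eqref{borneslem4-1}; the entire singularity of $G_z$ must therefore originate from the stable initial datum $\pi^s(z) W_1$ in \eqref{composantestable1}, through the factor $(\mathcal{B}|_{\E^s(z)})^{-1}$.

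To extract this singularity I use the simplicity of the zero of the Lopatinskii determinant at $\underline{z}_k$. Fixing a holomorphic basis $e_1(z), \dots, e_r(z)$ of $\E^s(z)$ near $\underline{z}_k$ and setting $B(z) := [\mathcal{B} e_1(z) \, \cdots \, \mathcal{B} e_r(z)]$, one has $\det B(z) = \Delta(z)$ with a simple zero, so Cramer's rule yields $B(z)^{-1} = A_{-1}/(z - \underline{z}_k) + A_0(z)$ with $A_{-1}$ of rank one and $A_0$ holomorphic. Plugging this into \eqref{composantestable1} produces a decomposition
\[
\pi^s(z) \, W_1 \, = \, \frac{P(z)}{z \, - \, \underline{z}_k} \, + \, H(z),
\]
where $P(z), H(z) \in \E^s(z)$ are holomorphic on $B_\varepsilon(\underline{z}_k)$ and both satisfy $|P(z)|, |H(z)| \le C \exp(- c \, j_0)$, thanks to the exponential decay of $\M(z)^{-j_0} \pi^u(z) \, \mathbf{e}$. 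The identity $B(z) A_{-1} = (z - \underline{z}_k)(I - B(z) A_0(z))$ coming from $B(z) B(z)^{-1} = I$ forces $\mathcal{B} P(\underline{z}_k) = 0$, so that $P(\underline{z}_k)$ lies in the one-dimensional intersection $\E^s(\underline{z}_k) \cap \text{\rm Ker } \mathcal{B}$.

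Substituting in \eqref{composantestablej}, the full vector $W_j(z)$ acquires a simple pole at $\underline{z}_k$ with residue $\M(\underline{z}_k)^{j-1} P(\underline{z}_k)$, uniformly in both regimes $1 \le j \le j_0$ and $j > j_0$, because the particular-solution term $-a_p^{-1} \, \M(z)^{j-j_0-1} \pi^s(z) \, \mathbf{e}$ is holomorphic at $\underline{z}_k$. I then define $\underline{w}_k(j,j_0)$ as the scalar coordinate of this residue that corresponds to $w_j$. Since $P(\underline{z}_k) \in \E^s(\underline{z}_k)$, the iterate $\M(\underline{z}_k)^{j-1} P(\underline{z}_k)$ decays exponentially in $j$, which combined with the $\exp(-c\,j_0)$ bound on $P(\underline{z}_k)$ gives $|\underline{w}_k(j,j_0)| \le \underline{C}_k \exp(-\underline{c}_k(j + j_0))$. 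The relation $\mathcal{B} P(\underline{z}_k) = 0$ translates exactly into the boundary conditions defining $\mathcal{H}$, so $\underline{w}_k(\cdot, j_0) \in \mathcal{H}$.

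Finally, the remainder $R_z(j,j_0)$ is the appropriate scalar coordinate of
\[
\pi^u(z) \, W_j \, + \, \frac{\M(z)^{j-1} P(z) \, - \, \M(\underline{z}_k)^{j-1} P(\underline{z}_k)}{z \, - \, \underline{z}_k} \, + \, \M(z)^{j-1} H(z) \, + \, \mathbf{1}_{j > j_0} \, \bigl( - \, a_p^{-1} \, \M(z)^{j-j_0-1} \, \pi^s(z) \, \mathbf{e} \bigr),
\]
which is manifestly holomorphic on $B_\varepsilon(\underline{z}_k)$. The first, third and fourth terms are bounded exactly as in Lemma \ref{lem:4} using the uniform stability of $\M(z)|_{\E^s(z)}$. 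The main step is the difference quotient in the second term: I estimate it by applying Cauchy's integral formula to the holomorphic, exponentially-stable map $\zeta \mapsto \M(\zeta)^{j-1} P(\zeta) \in \E^s(\zeta)$ on a circle of radius $\varepsilon/2$ around $\underline{z}_k$, which yields a bound of order $C \exp(-c(j+j_0))$. Collecting everything gives $|R_z(j,j_0)| \le \underline{C}_k \exp(-\underline{c}_k |j - j_0|)$. The main technical obstacle is precisely this difference-quotient bound, which requires simultaneous holomorphy in $z$ and geometric decay in $j$; shrinking $\varepsilon$ so that the stable eigenvalues of $\M(z)$ stay uniformly away from $\cercle$ makes the Cauchy-formula argument go through.
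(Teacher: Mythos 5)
Your proof follows essentially the same route as the paper: you reformulate the Green's function via the spatial dynamics \eqref{dyn-spatiale}, isolate the unstable part and the particular-solution stable part as holomorphic and exponentially decaying, extract the simple pole from $(\mathcal{B}|_{\E^s(z)})^{-1}$ using the simplicity of the zero of $\Delta$ at $\underline{z}_k$, check that the residue vector lands in $\text{\rm Ker }\mathcal{B}\cap\E^s(\underline{z}_k)$, and bound the remainder. The only meaningful variation is the final step, where you control the difference quotient $(\M(z)^{j-1}P(z)-\M(\underline{z}_k)^{j-1}P(\underline{z}_k))/(z-\underline{z}_k)$ via Cauchy's integral formula and the maximum principle applied to the holomorphic, exponentially decaying map $\zeta\mapsto\M(\zeta)^{j-1}P(\zeta)$, whereas the paper invokes an explicit Taylor-formula Lipschitz estimate (Lemma~\ref{lem-technique}); both deliver the same $C\,\exp(-c(j+j_0))$ bound and the argument is sound.
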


\begin{proof}
Many ingredients for the proof of Lemma \ref{lem:6} are already available in the proof of Lemma \ref{lem:4}. Namely, let us consider an eigenvalue 
$\underline{z}_k \in \cercle$ of $\mathcal{T}$. Since $\underline{z}_k \in \mathcal{C}^c$, the matrix $\M(z)$ enjoys the hyperbolic dichotomy between 
its stable and unstable eigenvalues in the neighborhood of $\underline{z}_k$. Moreover, for a sufficiently small radius $\varepsilon>0$, the pointed ball 
$B_\varepsilon(\underline{z}_k) \setminus \{ \underline{z}_k \}$ lies in the resolvant set of $\mathcal{T}$. In particular, for any $z \in B_\varepsilon 
(\underline{z}_k) \setminus \{ \underline{z}_k \}$, the spatial Green's function is obtained by selecting the appropriate scalar component of the vector 
sequence $(W_j)_{j \ge 1}$ defined by:
\begin{equation}
\label{lem6-1}
\forall \, j \ge 1 \, ,\quad \pi^u (z) \, W_j \, := \, \begin{cases}
0  \, ,& \text{\rm if $j \, > \, j_0$,} \\
a_p^{-1} \, \M(z)^{-(j_0+1-j)} \, \pi^u(z) \, {\bf e} \, ,& \text{\rm if $1 \, \le \, j \, \le \, j_0$,}
\end{cases}
\end{equation}
and:
\begin{equation}
\label{lem6-2}
\forall \, j \ge 1 \, ,\quad \pi^s (z) \, W_j \, := \, \begin{cases}
\M(z)^{j-1} \, \pi^s(z) \, W_1 \, ,& \text{\rm if $1 \, \le \, j \, \le \, j_0$,} \\
\M(z)^{j-1} \, \pi^s(z) \, W_1 -a_p^{-1} \, \M(z)^{j-j_0-1} \, \pi^s(z) \, {\bf e} \, ,& \text{\rm if $j \, > \, j_0$,}
\end{cases}
\end{equation}
where the vector $\pi^s (z) \, W_1 \in \E^s(z)$ is defined by (see \eqref{composantestable1}):
\begin{equation}
\label{lem6-3}
\pi^s(z) \, W_1 \, := -\, a_p^{-1} \, \Big( \mathcal{B}|_{\E^s(z)} \Big)^{-1} \, \mathcal{B} \, \M(z)^{-j_0} \, \pi^u(z) \, {\bf e} \, .
\end{equation}
(Here we use the fact that for every $z$ in the pointed ball $B_\varepsilon(\underline{z}_k) \setminus \{ \underline{z}_k \}$, the linear map 
$\mathcal{B}|_{\E^s(z)}$ is an isomorphism.)

The unstable component $\pi^u (z) \, W_j$ in \eqref{lem6-1} obviously extends holomorphically to the whole ball $B_\varepsilon(\underline{z}_k)$ 
and the estimate \eqref{borneslem4-1} shows that this contribution to the remainder term $R_z(j,j_0)$ satisfies the desired uniform exponential 
bound with respect to $z$. We thus focus from now on on the stable components defined by \eqref{lem6-2}, \eqref{lem6-3}. We first observe that, 
as in the unstable component \eqref{lem6-1}, the contribution:
$$
-a_p^{-1} \, \M(z)^{j-j_0-1} \, \pi^s(z) \, {\bf e}
$$
appearing in the definition \eqref{lem6-2} for $j>j_0$ also extends holomorphically to the ball $B_\varepsilon(\underline{z}_k)$ and contributes to 
the remainder term $R_z(j,j_0)$ with an $O(\exp(-c\, |j-j_0|))$ term. We thus focus on the sequence:
$$
\Big( \, \M(z)^{j-1} \, \pi^s(z) \, W_1 \, \Big)_{j \ge 1} \, ,
$$
where the vector $\pi^s(z) \, W_1 \in \E^s(z)$ is defined by \eqref{lem6-3} for $z \in B_\varepsilon(\underline{z}_k) \setminus \{ \underline{z}_k \}$. 
The singularity in the Green's function comes from the fact that $\mathcal{B}|_{\E^s(\underline{z}_k)}$ is no longer an isomorphism. We now make 
this singularity explicit.

We pick a basis $e_1(z),\dots,e_r(z)$ of the stable subspace $\E^s(z)$ that depends holomorphically on $z$ near $\underline{z}_k$. Since the 
Lopatinskii determinant factorizes as:
$$
\Delta (z) \, = \, (z \, - \, \underline{z}_k) \, \vartheta(z) \, ,
$$
where $\vartheta$ is a holomorphic function that does not vanish at $\underline{z}_k$, we can therefore write:
$$
\begin{pmatrix} 
\mathcal{B} \, e_1(z) & \cdots & \mathcal{B} \, e_r(z) \end{pmatrix}^{-1} \, = \, \dfrac{1}{z \, - \, \underline{z}_k} \, D(z) \, ,
$$
where $D(z)$ is a matrix in $\mathcal{M}_r(\C)$ that depends holomorphically on $z$ near $\underline{z}_k$. We then define the vector:
\begin{equation}
\label{lem6-4}
\underline{\mathcal{W}}(j_0) \, := \, \begin{pmatrix} 
e_1(\underline{z}_k) & \cdots & e_r(\underline{z}_k) \end{pmatrix} \, D(\underline{z}_k) \, \Big( 
 -\, a_p^{-1} \, \mathcal{B} \, \M(\underline{z}_k)^{-j_0} \, \pi^u(\underline{z}_k) \, {\bf e} \Big) \, ,
\end{equation}
which satisfies the bound:
\begin{equation}
\label{borneslem6-1}
\big| \, \underline{\mathcal{W}}(j_0) \, \big| \, \le \, C \, \exp( - \, c \, j_0) \, ,
\end{equation}
for some positive constants $C$ and $c$, uniformly with respect to $j_0 \ge 1$. Moreover, since we have the relation:
$$
\mathcal{B} \, \begin{pmatrix} 
e_1(\underline{z}_k) & \cdots & e_r(\underline{z}_k) \end{pmatrix} \, D(\underline{z}_k) \, = \, 0 \, ,
$$
the vector $\underline{\mathcal{W}}(j_0)$ belongs to $\text{\rm Ker } \mathcal{B} \, \cap \, \E^s (\underline{z}_k)$. Hence, by selecting the appropriate 
coordinate, the geometric sequence (which is valued in $\C^{p+r}$):
$$
\Big( \, \M(\underline{z}_k)^{j-1} \, \underline{\mathcal{W}}(j_0) \, \Big)_{j \ge 1} \, ,
$$
provides with a scalar sequence $(\underline{w}_k(j,j_0))_{j,j_0 \ge 1}$ with $\underline{w}_k(\cdot,j_0) \in \mathcal{H}$ for all $j_0 \ge 1$, and 
that satisfies the bound:
$$
\forall \, j,j_0 \ge 1 \, ,\quad \big| \, \underline{w}_k(j,j_0) \, \big| \, \le \, \underline{C}_k \, \exp \, \big( -\underline{c}_k \, (j \, + \, j_0) \, \big) \, ,
$$
as stated in Lemma \ref{lem:6}. It thus only remains to show that the remainder term:
\begin{equation}
\label{lem6-5}
\mathcal{R}_z(j,j_0) \, := \, \M(z)^{j-1} \, \pi^s(z) \, W_1 \, - \, \dfrac{\M(\underline{z}_k)^{j-1} \, \underline{\mathcal{W}}(j_0)}{z \, - \, \underline{z}_k}
\end{equation}
extends holomorphically to $B_\varepsilon(\underline{z}_k)$ and satisfies a suitable exponential bound.

We decompose the vector $\pi^s(z) \, W_1$ in \eqref{lem6-3} along the basis $e_1(z),\dots,e_r(z)$ of the stable subspace $\E^s(z)$ and write:
$$
\pi^s(z) \, W_1 \, = \, \dfrac{1}{z \, - \, \underline{z}_k} \, \begin{pmatrix} 
e_1(z) & \cdots & e_r(z) \end{pmatrix} \, D(z) \, \Big( - \, a_p^{-1} \, \mathcal{B} \, \M(z)^{-j_0} \, \pi^u(z) \, {\bf e} \Big) \, .
$$
Using the definitions \eqref{lem6-4} and \eqref{lem6-5}, we can decompose the remainder $\mathcal{R}_z(j,j_0)$ as follows:
\begin{align*}
\mathcal{R}_z(j,j_0) \, =& \, \dfrac{1}{z \, - \, \underline{z}_k} \, \Big( \, (\M(z) \, \pi^s(z))^{j-1} \, - \, (\M(\underline{z}_k) \, \pi^s(\underline{z}_k))^{j-1} 
\, \Big) \, \underline{\mathcal{W}}(j_0) \\
& \, -\dfrac{a_p^{-1}}{z \, - \, \underline{z}_k} \, (\M(z) \, \pi^s(z))^{j-1} \, \Big( \, \begin{pmatrix} 
e_1(z) & \cdots & e_r(z) \end{pmatrix} \, D(z) \, \mathcal{B} \, (\M(z) \, \pi^u(z))^{-j_0} \, {\bf e} \\
& \quad -\begin{pmatrix} e_1(\underline{z}_k) & \cdots & e_r(\underline{z}_k) \end{pmatrix} \, 
D(\underline{z}_k) \, \mathcal{B} \, (\M(\underline{z}_k) \, \pi^u(\underline{z}_k))^{-j_0} \, {\bf e} \Big) \, .
\end{align*}
Both terms (the first line, and the difference between the second and third lines) in the above decomposition are dealt with by applying the following 
result combined with the hyperbolic dichotomy of $\M(z)$ near $\underline{z}_k$.

\begin{lemma}
\label{lem-technique}
Let $M$ be a holomorphic function on the open ball $B_\delta(0)$ with values in $\mathcal{M}_N(\C)$ for some $\delta>0$ and integer $N$, that 
satisfies:
$$
\exists \, C \, > \, 0 \, ,\quad \exists \, r \in (0,1) \, ,\quad \forall \, j \in \N \, ,\quad \forall \, z \in B_\delta(0) \, ,\quad 
\| \, M(z)^{\, j} \, \| \, \le \, C \, r^{\, j} \, .
$$
Then up to diminishing $\delta$ and for some possibly new constants $C>0$ and $r \in (0,1)$, there holds:
$$
\forall \, j \in \N \, ,\quad \forall \, z_1,z_2 \in B_\delta(0) \, ,\quad \| \, M(z_1)^{\, j} \, - \, M(z_2)^{\, j} \, \| \, \le \, C \, |z_1 \, - \, z_2| \, r^{\, j} \, .
$$
\end{lemma}

Applying Lemma \ref{lem-technique} to the above decomposition of $\mathcal{R}_z(j,j_0)$, and using the exponential decay of the sequences 
$(\M(z)^{\, j} \, \pi^s(z))_{j \in \N}$ and $(\M(z)^{-j} \, \pi^u(z))_{j \in \N}$, we get the bound:
$$
\big| \, \mathcal{R}_z(j,j_0) \, \big| \, \le \, C \, \exp \big( \, - \, \underline{c}_k \, (j \, + \, j_0) \, \big) \, ,
$$
which means that $\mathcal{R}_z(j,j_0)$ remains bounded on $B_\varepsilon(\underline{z}_k) \setminus \{ \underline{z}_k \}$ and can therefore be 
extended holomorphically to the whole ball $B_\varepsilon(\underline{z}_k)$. The proof of Lemma \ref{lem:6} is complete.
\end{proof}

\begin{proof}[Proof of Lemma \ref{lem-technique}]
The argument is a mere application of the Taylor formula. Let us recall that the differential of the mapping:
$$
\Psi_j \quad : \quad A \in \mathcal{M}_N(\C) \quad \longmapsto \quad A^{\, j} \, ,
$$
is given by:
$$
{\rm d}\Psi_j (A) \cdot B \, = \, \sum_{\ell=0}^{j-1} \, A^{\, \ell} \, B \, A^{\, j-1-\ell} \, ,
$$
so we have:
$$
M(z_1)^{\, j} \, - \, M(z_2)^{\, j} \, = \, (z_1 \, - \, z_2) \, \int_0^1 \, \sum_{\ell=0}^{j-1} \, M(z_2 \, +t \, (z_1-z_2))^{\, \ell} \, M'(z_2 \, +t \, (z_1-z_2)) \, 
M(z_2 \, +t \, (z_1-z_2))^{\, j-1-\ell} \, {\rm d}t \, .
$$
The result follows by using a uniform bound for the first derivative $M'$, up to diminishing $\delta$, and using the exponential decay of the sequence 
$(j \, r^{\, j})_{j \in \N}$.
\end{proof}

\subsection{Summary}

Collecting the results of Lemma \ref{lem:4}, Lemma \ref{lem:5} and Lemma \ref{lem:6}, we can obtain the following bound for the spatial Green's function 
away from the spectrum of $\mathcal{T}$.

\begin{corollary}
\label{cor1}
There exist a radius $\varepsilon>0$, some width $\eta_\varepsilon>0$ and two constants $C_0>0$, $c_0>0$ such that, for all $z$ in the set:
$$
\Big\{ \zeta \in \C \, / \, {\rm e}^{- \, \eta_\varepsilon} \, < \, |\zeta| \, \le \, {\rm e}^\pi \, \Big\} \, \setminus \, \left( \, 
\bigcup_{k=1}^K B_\varepsilon(\underline{z}_k) \, \cup \, B_\varepsilon(1) \, \right) \, ,
$$
and for all $j_0 \ge 1$, the Green's function $G_z(\cdot,j_0) \in \mathcal{H}$ solution to \eqref{defGzj} satisfies the pointwise bound:
$$
\forall \, j \ge 1 \, ,\quad \big| \, G_z(j,j_0) \, \big| \, \le \, C_0 \, \exp \big( - \, c_0 \, |j \, - \, j_0| \, \big) \, .
$$
Moreover, for $z$ inside the ball $B_\varepsilon(1)$, the Green's function component $G_z(j,j_0)$ depends holomorphically on $z$ and 
satisfies the bound given in Lemma \ref{lem:5}, and for $k=1,\dots,K$ and $z$ in the pointed ball $B_\varepsilon(\underline{z}_k) \setminus 
\{ \underline{z}_k \}$, $G_z(j,j_0)$ has a simple pole at $\underline{z}_k$ with the behavior stated in Lemma \ref{lem:6}.
\end{corollary}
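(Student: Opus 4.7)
The last two statements of the corollary are immediate restatements of Lemma~\ref{lem:5} and Lemma~\ref{lem:6} once one fixes the radius $\varepsilon > 0$ small enough so that both lemmas apply and so that the balls $B_\varepsilon(1), B_\varepsilon(\underline{z}_1), \dots, B_\varepsilon(\underline{z}_K)$ are pairwise disjoint. The real content is therefore the uniform bound on the Green's function away from the spectrum, and the whole argument is a standard compactness/covering scheme.

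The plan is to first construct a compact set on which $\mathcal{T}$ is resolvent and on which $\M(z)$ enjoys a hyperbolic dichotomy. Concretely, I would pick $\eta_\varepsilon > 0$ small and consider
\[
\mathcal{K} \, := \, \Big\{ \zeta \in \C \,/\, {\rm e}^{-\eta_\varepsilon} \le |\zeta| \le {\rm e}^\pi \Big\} \, \setminus \, \left( B_\varepsilon(1) \cup \bigcup_{k=1}^K B_\varepsilon(\underline{z}_k) \right) \, .
\]
Two properties must be ensured by the choice of $\eta_\varepsilon$. First, $\mathcal{K} \subset \mathcal{C}^c$: the set $\mathcal{C}$ of Lemma~\ref{lem:1} has the form $\{\rho {\rm e}^{{\bf i}\varphi} : \rho \le 1 - c_0 \varphi^{2\mu}\}$, so it touches $\cercle$ only at $z=1$. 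Removing $B_\varepsilon(1)$ bounds $|\varphi|$ away from $0$, which bounds $\mathcal{C}$ away from $\cercle$ by some $\delta_\varepsilon>0$; any $\eta_\varepsilon<\delta_\varepsilon$ then works. Second, $\mathcal{K}$ must lie in the resolvent set: by Lemma~\ref{lem:3}, $\mathcal{C}^c \setminus \{\underline{z}_1,\dots,\underline{z}_K\} \cap \{|\zeta|>1-\varepsilon'\}$ belongs to the resolvent set for some $\varepsilon'>0$, so after further shrinking $\eta_\varepsilon$ if necessary, every point of $\mathcal{K}$ is regular for $\mathcal{T}$.

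Next, I would observe that Lemma~\ref{lem:4} applies verbatim at every point of $\mathcal{K}$: its proof only uses that $\overline{B_r(\underline{z})}$ lies both in $\mathcal{C}^c$ and in the resolvent set, not that $\underline{z}$ belongs to $\overline{\U}$. Thus around every $\underline{z}\in\mathcal{K}$, there exist a radius $r_{\underline{z}} > 0$ and constants $C_{\underline{z}}, c_{\underline{z}} > 0$ such that the pointwise exponential bound holds for all $z \in B_{r_{\underline{z}}}(\underline{z})$ and all $j,j_0 \ge 1$. Since $\mathcal{K}$ is compact, I extract a finite subcover by balls $B_{r_{\underline{z}_i}}(\underline{z}_i)$, $i=1,\dots,N$, and take $C_0 := \max_i C_{\underline{z}_i}$ and $c_0 := \min_i c_{\underline{z}_i}$. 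These $(C_0,c_0)$ then yield the claimed uniform bound on $\mathcal{K}$, and hence on the (slightly smaller) open set appearing in the statement.

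I do not expect any serious obstacle here: the only subtlety is verifying simultaneously the $\mathcal{C}^c$ and resolvent conditions on the inner part of the annulus (the piece with $|z|<1$), but this is handled by choosing $\eta_\varepsilon$ after $\varepsilon$ so that the $B_\varepsilon(1)$ excision absorbs the cusp of $\mathcal{C}$ at $1$. Everything else is the routine gluing of pointwise constants via a finite cover.
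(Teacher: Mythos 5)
Your proof is correct and is exactly the argument the paper intends: the paper gives no explicit proof of this corollary, stating only that it follows by ``collecting the results'' of Lemmas~\ref{lem:4}, \ref{lem:5}, \ref{lem:6}, and the intended mechanism is precisely the compactness/finite-covering extraction of uniform constants that you carry out, after checking that the compact annular set lies in $\mathcal{C}^c$ and in the resolvent set (so that the local argument of Lemma~\ref{lem:4} applies at every point). Your two verifications on the choice of $\eta_\varepsilon$ — that shrinking the inner radius pushes the cusp of $\mathcal{C}$ at $1$ inside $B_\varepsilon(1)$, and that Lemma~\ref{lem:3} guarantees resolvent membership near $\cercle$ — are the only non-trivial checks, and you handle both correctly.
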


\section{Temporal Green's function and proof of Theorem~\ref{thm1}}
\label{sect:proof}

The starting point of the analysis is to use inverse Laplace transform formula to  express the temporal Green's function $\G^{\, n}(\cdot,j_0) := 
\mathcal{T}^{\, n} \, \boldsymbol{\delta}_{j_0}$ as the following contour integral:
\begin{equation}
\forall \, n \in \N^* \, ,\quad \forall \, j \geq 1 \, ,\quad \forall \, j_0 \geq 1 \, ,\quad \G^{\, n}(j,j_0) \, = \, 
\left( \, \mathcal{T}^{\, n} \, \boldsymbol{\delta}_{j_0} \, \right)_j \, = \, \dfrac{1}{2 \, \pi \, \mathbf{i}} \, \int_{\widetilde\Gamma} \, z^n \, G_z(j,j_0) \, \md z \, ,
\label{lapgreenz}
\end{equation}
where $\widetilde{\Gamma}$ is a closed curve in the complex plane surrounding the unit disk $\D$ lying in the resolvent set of $\mathcal{T}$ and 
$G_z(\cdot,j_0) \in \mathcal{H}$ is the spatial Green's function defined in \eqref{defGzj}.

Following our recent work \cite{CF1}, the idea will be to deform $\widetilde{\Gamma}$ in order to obtain sharp pointwise estimates on the temporal 
Green's function using our pointwise estimates on the spatial Green's function summarized in Corollary~\ref{cor1} above. To do so, we first change 
variable in \eqref{lapgreenz}, by setting $z=\exp(\tau)$, such that we get
\bqq
\G^{\, n}(j,j_0) \, = \, \dfrac{1}{2 \, \pi \, \mathbf{i}} \, \int_{\Gamma} \, {\rm e}^{n \, \tau} \, \mathbf{G}_\tau(j,j_0) \, \md \tau \, ,
\label{lapgreentau}
\eqq
where without loss of generality $\Gamma=\left\{ s+\mathbf{i} \, \ell ~|~ \ell \in[-\pi,\pi]\right\}$ for some (and actually any) $s>0$, and $\mathbf{G}_\tau 
(\cdot,j_0) \in \mathcal{H}$ is given by
\bqs
\forall \, \tau \in \Gamma \, ,\quad \forall \, j \geq 1 \, ,\quad \forall \, j_0 \geq 1 \, ,\quad 
\mathbf{G}_\tau(j,j_0) \, := \, G_{{\rm e}^{\tau}}(j,j_0) \, {\rm e}^{\tau} \, .
\eqs

It is already important to remark that as $\T$ is a recurrence operator with finite stencil, for each $n\geq 1$, there holds
\bqs
\G^{\, n} (j,j_0) \, = \, 0 \, , \, \text{ for } \, j \, - \, j_0 \, > \, r \, n \, \text{ or } \, j \, - \, j_0 \, < \, - \, p \, n \, .
\eqs
As a consequence, throughout this section, we assume that $j$, $j_0$ and $n$ satisfy
\bqs
- \, p \, n \, \leq \, j \, - \, j_0 \, \leq \, r \, n \, .
\eqs

The very first step in the analysis of the temporal Green's function defined in \eqref{lapgreentau} is to translate the pointwise estimates from 
Corollary~\ref{cor1} for the spatial Green's function $G_z(j,j_0)$ to pointwise estimates for $\mathbf{G}_\tau(j,j_0)$. We let be $\underline{\tau}_k 
= \mathbf{i} \, \underline{\theta}_k := \log(\underline{z}_k)$ for $\underline{\theta}_k \in [-\pi,\pi] \setminus \{ 0 \}$ for each $k=1,\cdots,K$. Finally, 
we also set $\alpha:=\lambda \, a>0$. As in \cite{CF1}, the temporal Green's function is expected to have a leading order contribution concentrated 
near $j-j_0 \sim \alpha \, n$. An important feature of the situation we deal here with is that the temporal Green's function should also incorporate the 
contribution of the eigenvalues $\underline{z}_k$. These contributions will not decay with respect to $n$ since the $\underline{z}_k$'s have modulus 
$1$.

\begin{lemma}
\label{lem8}
There exist a radius $\varepsilon>0$, some width $\eta_\varepsilon>0$ and constants $0<\beta_*<\beta<\beta^*$ and $C>0$, $c>0$ such that, for all 
$z$ in the set:
$$
\Omega_\varepsilon := \Big\{ \tau \in \C \, | \, - \, \eta_\varepsilon \, < \, \Re(\tau) \, \le \, \pi \, \Big\} \, \setminus \, \left( \, 
\bigcup_{k=1}^K B_\varepsilon(\mathbf{i} \, \underline{\theta}_k) \, \cup \, B_\varepsilon(0) \, \right) \, ,
$$
and for all $j_0 \ge 1$, the Green's function $\mathbf{G}_\tau(\cdot,j_0)\in\mathcal{H}$ satisfies the pointwise bound:
$$
\forall \, j,j_0 \ge 1 \, ,\quad \big| \, \mathbf{G}_\tau(j,j_0) \, \big| \, \le \, C \, \exp \big( - \, c \, |j \, - \, j_0| \, \big) \, .
$$
Moreover, for $\tau$ inside the ball $B_\varepsilon(0)$, the Green's function component $\mathbf{G}_\tau(j,j_0)$ depends holomorphically on $\tau$ and 
satisfies the bound 
\bqs
\forall \, \tau \in B_\varepsilon(0) \, ,\quad \forall \, j,j_0 \ge 1 \, ,\quad \big| \, \mathbf{G}_\tau(j,j_0) \, \big| \, \le \, \begin{cases}
C \, \exp \big( -c \, |j-j_0| \, \big) \, ,& \text{\rm if $1 \, \le \, j \, \le \, j_0 $,} \\
C \, \exp \big( \, |j-j_0| \, \Re(\varpi(\tau)) \, \big) \, ,& \text{\rm if $j \, > \, j_0$,}
\end{cases} 
\eqs
with
\bqs
\varpi(\tau) \, = \, - \, \dfrac{1}{\alpha} \, \tau \, + \, (-1)^{\mu+1} \, \dfrac{\beta}{\alpha^{2 \, \mu+1}} \, \tau^{2 \, \mu} \, + \, 
O \left( \, |\tau|^{2 \, \mu+1} \, \right) \, ,\quad \, \forall \, \tau \in B_\varepsilon(0) \, ,
\eqs
together with
\bqs
\Re(\varpi(\tau)) \, \leq \, - \, \dfrac{1}{\alpha} \, \Re(\tau) \, + \,  \, \dfrac{\beta^*}{\alpha^{2 \, \mu+1}} \, \Re(\tau)^{2 \, \mu} \, -  \, 
\dfrac{\beta_*}{\alpha^{2 \, \mu+1}} \, \Im(\tau)^{2 \, \mu} \, ,\quad \, \forall \, \tau \in B_\varepsilon(0) \, .
\eqs
At last, for any $k=1,\dots,K$ and $\tau$ in the pointed ball $B_\varepsilon(\mathbf{i} \, \underline{\theta}_k) \setminus \{\mathbf{i} \, \underline{\theta}_k \}$, 
$\mathbf{G}_\tau(j,j_0)$ has a simple pole at $\mathbf{i} \, \underline{\theta}_k$ with the following behavior. There exists a sequence $(\underline{{\bf w}}_k 
(j,j_0))_{j,j_0 \ge 1}$ with $\underline{{\bf w}}_k(\cdot,j_0) \in \mathcal{H}$ for all $j_0 \ge 1$, such that:
$$
\forall \, j,j_0 \ge 1 \, ,\quad 
\mathbf{R}_\tau(j,j_0) \, := \, \mathbf{G}_\tau(j,j_0) \, - \, \dfrac{\underline{{\bf w}}_k(j,j_0)}{\tau-\mathbf{i} \, \underline{\theta}_k} \, ,
$$
extends holomorphically to the whole ball $B_\varepsilon(\mathbf{i} \, \underline{\theta}_k)$ with respect to $\tau$, and the holomorphic extension 
satisfies the bound:
$$
\forall \, \tau \in B_\varepsilon(\mathbf{i} \, \underline{\theta}_k) \, ,\quad \big| \, \mathbf{R}_\tau(j,j_0) \, \big| \, \le \, C \, \exp \, \big( -c \, |j \, - \, j_0| \, \big) \, .
$$
Moreover, the sequence $(\underline{{\bf w}}_k(j,j_0))_{j,j_0 \ge 1}$ satisfies the pointwise bound:
\begin{equation}
\label{bornesprofils}
\forall \, j,j_0 \ge 1 \, ,\quad \big| \, \underline{{\bf w}}_k(j,j_0) \, \big| \, \le \, C \, \exp \, \big( \, - \, c \, (j \, + \, j_0) \, \big) \, .
\end{equation}
\end{lemma}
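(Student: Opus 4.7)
The plan is to translate each of the three pointwise estimates from Corollary~\ref{cor1} through the change of variables $z = {\rm e}^\tau$. On $\Omega_\varepsilon$, the factor ${\rm e}^\tau$ is bounded and can always be absorbed into the constant $C$. The estimate away from special points follows immediately: for $\tau$ in the stated set, its image ${\rm e}^\tau$ lies in the region covered by Corollary~\ref{cor1} outside the corresponding $z$-balls, so the uniform exponential bound transfers. Near $\tau_0 := \mathbf{i}\,\underline{\theta}_k$, one has ${\rm e}^\tau - \underline{z}_k = \underline{z}_k\,(\tau - \tau_0)\,\Phi_k(\tau)$ with $\Phi_k$ holomorphic on a small disk, nonvanishing there, and $\Phi_k(\tau_0) = 1$. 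Thus the simple pole of $G_z$ at $\underline{z}_k$ yields a simple pole of $\mathbf{G}_\tau$ at $\tau_0$ with the same residue, so setting $\underline{{\bf w}}_k(j,j_0) := \underline{w}_k(j,j_0)$ gives the bound \eqref{bornesprofils} directly from Lemma~\ref{lem:6}; the correction $\bigl({\rm e}^\tau/(\underline{z}_k\,\Phi_k(\tau)) - 1\bigr)/(\tau-\tau_0)$ is a bounded holomorphic factor, and combined with ${\rm e}^\tau R_{{\rm e}^\tau}(j,j_0)$ the remainder $\mathbf{R}_\tau(j,j_0)$ inherits from Lemma~\ref{lem:6} both its holomorphic extension across $\tau_0$ and its exponential decay in $|j-j_0|$.

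The real work is the expansion near $\tau = 0$. The companion structure of $\M(z)$ in \eqref{defM} implies that $\kappa$ is an eigenvalue of $\M(z)$ if and only if $\widehat{a}(\kappa) = z$, where $\widehat{a}(w) := \sum_{\ell=-r}^p a_\ell\,w^\ell$. The distinguished simple eigenvalue $\kappa(z)$ of Lemma~\ref{lem:5} with $\kappa(1) = 1$ therefore satisfies, writing $\kappa = {\rm e}^{\mathbf{i}\,\theta}$ and invoking \eqref{hyp:stabilite2}, the relation
$$ \tau \, = \, \log z \, = \, -\,\mathbf{i}\,\alpha\,\theta \, - \, \beta\,\theta^{2\mu} \, + \, O(\theta^{2\mu+1}) , \qquad \alpha \, := \, \lambda\,a . $$
Inverting this series holomorphically near $\tau = 0$ by the implicit function theorem gives
$$ \theta(\tau) \, = \, \frac{\mathbf{i}}{\alpha}\,\tau \, + \, \frac{\mathbf{i}\,(-1)^\mu\,\beta}{\alpha^{2\mu+1}}\,\tau^{2\mu} \, + \, O(\tau^{2\mu+1}) , $$
so that $\varpi(\tau) := \log \kappa({\rm e}^\tau) = \mathbf{i}\,\theta(\tau)$ enjoys the announced expansion. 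The pointwise bound of Lemma~\ref{lem:5} then reads $|\kappa({\rm e}^\tau)|^{|j - j_0|} = \exp(|j - j_0|\,\Re\varpi(\tau))$, which gives the right-hand side in the case $j > j_0$; the case $1 \le j \le j_0$ is transferred from Lemma~\ref{lem:5} without further work since the $|\kappa|$-factor does not appear.

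The last and most delicate step is to derive the quadratic upper bound on $\Re\varpi(\tau)$. Writing $\tau = \sigma + \mathbf{i}\,\ell$ and expanding by the binomial theorem,
$$ \Re(\tau^{2\mu}) \, = \, \sum_{k=0}^\mu \binom{2\mu}{2k}\,(-1)^k\,\sigma^{2\mu-2k}\,\ell^{2k} , $$
and the extreme term $k = \mu$, multiplied by the prefactor $(-1)^{\mu+1}\,\beta/\alpha^{2\mu+1}$, yields precisely $-\,\beta\,\ell^{2\mu}/\alpha^{2\mu+1}$. The main technical obstacle is to dominate the mixed cross-terms ($0 < k < \mu$) and the higher-order remainder $O(|\tau|^{2\mu+1})$ in a form compatible with the announced inequality. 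I would absorb each such contribution using Young's inequality in the weighted form $|\sigma|^{2\mu-2k}\,|\ell|^{2k} \le \delta\,\ell^{2\mu} + C_\delta\,\sigma^{2\mu}$ for arbitrarily small $\delta > 0$: the $\delta$-fractions of $\ell^{2\mu}$ eat only a tiny amount of the negative main term, and the $\sigma^{2\mu}$-parts collect into a single positive contribution at the price of a larger constant. The remainder term is likewise controlled by $|\tau|^{2\mu+1} \le \varepsilon\,(\sigma^{2\mu} + \ell^{2\mu})$ on a sufficiently small ball. Choosing $\varepsilon$ small enough then delivers the inequality with $\beta_* = \beta - \delta' < \beta < \beta^* = \beta + \delta''$ for suitable $\delta', \delta'' > 0$, which completes the proof.
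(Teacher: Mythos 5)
Your proof is correct and takes essentially the same approach as the paper: change variables $z = {\rm e}^\tau$, transport Corollary~\ref{cor1} term by term, derive the expansion of $\varpi$ from the eigenvalue relation and Assumption~\ref{hyp:1}, and absorb the cross-terms of $\Re(\tau^{2\mu})$ via Young's inequality. You fill in details the paper only gestures at — the explicit pole factorization $e^\tau - \underline{z}_k = \underline{z}_k(\tau-\tau_0)\Phi_k(\tau)$ showing that the residue is unchanged, and the holomorphic inversion of the series $\tau = -\mathbf{i}\alpha\theta - \beta\theta^{2\mu} + O(\theta^{2\mu+1})$ — but the underlying argument coincides with that of the paper.
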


\begin{proof}
The proof simply relies on writing $\kappa(z)=\exp(\omega(z))$ and using $z=\exp(\tau)$, such that after identification we have $\varpi(\tau):=\omega(\exp(\tau))$. 
Next, using our assumption \eqref{hyp:stabilite2}, we obtain the desired expansion for $\varpi(\tau)$ near $\tau=0$. From this expansion, we get
\begin{align*}
\Re(\varpi(\tau)) \, &= \, - \, \dfrac{1}{\alpha} \, \Re(\tau) \, - \, \dfrac{\beta}{\alpha^{2 \, \mu+1}} \, \Im(\tau)^{2 \, \mu} 
\, - \, \dfrac{\beta}{\alpha^{2 \, \mu+1}} \, (-1)^\mu \, \Re(\tau)^{2 \, \mu} \\
&~~~-\frac{\beta}{\alpha^{2 \, \mu+1}} \, \sum_{m=1}^{\mu-1} \, (-1)^m \, 
\left( \begin{matrix}
2 \, \mu \\
2 \, m \end{matrix} \right) \, \Re(\tau)^{2 \, m} \, \Im(\tau)^{2 \, (\mu-m)} \, + \, O(|\tau|^{2 \, \mu+1}) \, .
\end{align*}
for all $\tau \in B_\varepsilon(0)$. We crucially note that the term $\Im(\tau)^{2 \, \mu}$ comes with a negative sign such that both $O(|\tau|^{2 \, \mu+1})$ 
and each term of the sum, using Young's inequality, can be absorbed and we arrive at the desired estimate for two uniform constants $0<\beta_*<\beta<\beta^*$. 
The remainder of the proof is a simple transposition of Lemma~\ref{lem:4},~\ref{lem:5} and \ref{lem:6} in the new variable $\tau$.
\end{proof}

\begin{figure}[t!]
  \centering
 \includegraphics[width=.48\textwidth]{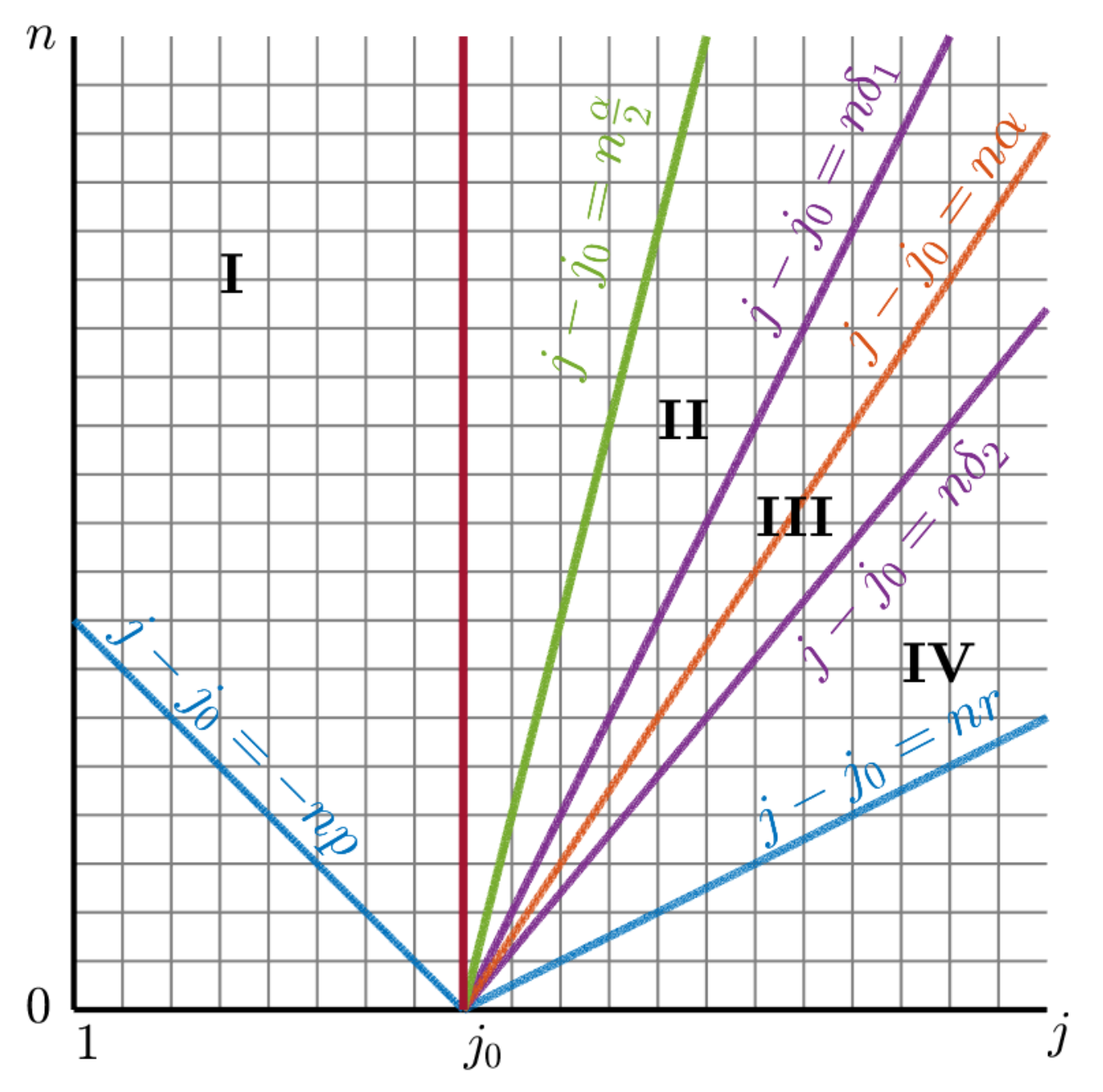}
  \caption{Schematic illustration of the different domains in the $(j,n)$ plane used in the analysis. In each domain, we use a different contour integral in 
  \eqref{lapgreentau}. In domain \textbf{I}, that is for $- \, n \, p\leq j-j_0\leq n \frac{\alpha}{2}$, we can push the contour of integration $\Gamma$ to $\Re(\tau) 
  =-\eta$ for some well chosen $\eta>0$. For values in domains \textbf{II}, \textbf{III} and \textbf{IV},  we can also push the contour of integration $\Gamma$ 
  to $\Re(\tau)=-\eta$ but this time we have to use a  ``parabolic'' contour near the origin. We refer to Figure~\ref{fig:contour} for an illustration of such 
  contours in domain \textbf{III}. Note that below the lines $j-j_0=- n \, p$ and $j-j_0=n \, r$ (blue) the Green's function $\G^{\, n}(j,j_0)$ vanishes.}
  \label{fig:spacetime}
\end{figure}

With the notations introduced in the above Lemma, we can summarize in the following proposition the results that we will prove in this section. Why Proposition 
\ref{prop1} is sufficient to get the result of Theorem \ref{thm1} is explained at the end of this Section.

\begin{proposition}
\label{prop1}
There exist two constants $C>0$ and $\omega>0$ such that for any $n \geq 1$ the temporal Green's function $\G^{\, n}$ satisfies the pointwise estimate
\bqs
\forall \, j,j_0 \ge 1 \, ,\quad 
\left| \, \mathcal{G}^{\, n}(j,j_0) \, - \, \sum_{k=1}^K \, \underline{{\bf w}}_k(j,j_0) \, {\rm e}^{\, n \, \mathbf{i} \, \underline{\theta}_k} \, \right| \, \le \, 
\dfrac{C}{n^{\frac{1}{2 \, \mu}}} \, \exp \left( -\, \omega \, \left( \dfrac{|j-j_0- \alpha \, n|}{n^{\frac{1}{2 \, \mu}}} \right)^{\frac{2 \, \mu}{2 \, \mu-1}}\right) \, .
\eqs
\end{proposition}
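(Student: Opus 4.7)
The plan is to start from the inverse Laplace representation \eqref{lapgreentau} and deform the contour $\Gamma$ leftward across the imaginary axis, capturing the residues at the simple poles $\mathbf{i}\,\underline{\theta}_k$ and exploiting the fine pointwise control of $\mathbf{G}_\tau$ from Lemma~\ref{lem8}. By the residue theorem applied to the rectangle bounded by $\{\Re\tau=s\}$ and a contour $\Gamma_*$ that lies in $\Omega_\varepsilon\cup B_\varepsilon(0)$ (and goes around each small ball $B_\varepsilon(\mathbf{i}\,\underline{\theta}_k)$ from the left using the already-shifted segment $\Re\tau=-\eta_\varepsilon$), we obtain
\begin{equation*}
\G^{\, n}(j,j_0) \, = \, \sum_{k=1}^K \, \underline{{\bf w}}_k(j,j_0)\,{\rm e}^{\, n\,\mathbf{i}\,\underline{\theta}_k} \, + \, \frac{1}{2\pi\mathbf{i}}\int_{\Gamma_*} {\rm e}^{\, n\,\tau}\,\mathbf{G}_\tau(j,j_0)\,\md\tau,
\end{equation*}
where the residue contribution is exactly the constant-in-$n$ part isolated in the statement (using periodicity of the boundary of the strip $-\pi\le\Im\tau\le\pi$). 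It therefore remains to bound the remaining integral by the generalized Gaussian envelope.

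Next I would specify $\Gamma_*$ more precisely, following the case distinction suggested by Figure~\ref{fig:spacetime}. On the portion of $\Gamma_*$ with $|\Im\tau|\ge\varepsilon$ we keep $\Re\tau=-\eta_\varepsilon$, where by Lemma~\ref{lem8} we have $|\mathbf{G}_\tau(j,j_0)|\le C\,e^{-c|j-j_0|}$; combined with the prefactor $|{\rm e}^{n\tau}|={\rm e}^{-n\eta_\varepsilon}$ this gives a contribution bounded by $C\,{\rm e}^{-n\eta_\varepsilon - c|j-j_0|}$, which is much smaller than the claimed envelope in every regime $-pn\le j-j_0\le rn$. On the remaining ``central'' portion of $\Gamma_*$, which lies in $B_\varepsilon(0)$, I would use the holomorphic extension of $\mathbf{G}_\tau$ at the origin and, when $j>j_0$, the refined bound $|\mathbf{G}_\tau(j,j_0)|\le C\,\exp\bigl((j-j_0)\Re\varpi(\tau)\bigr)$ with $\varpi(\tau)=-\tau/\alpha+(-1)^{\mu+1}\beta\,\tau^{2\mu}/\alpha^{2\mu+1}+O(|\tau|^{2\mu+1})$. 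This reduces the analysis to estimating a scalar oscillatory integral whose phase is
\begin{equation*}
\Phi(\tau)\, := \, n\,\tau \, + \, (j-j_0)\,\varpi(\tau) \, = \, (n\,\alpha-(j-j_0))\,\frac{\tau}{\alpha}\,+\,(-1)^{\mu+1}\,\frac{\beta\,(j-j_0)}{\alpha^{2\mu+1}}\,\tau^{2\mu}\,+\,\cdots
\end{equation*}

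The main work, which is the heart of the proof, is then a saddle-point/Laplace-type analysis of this integral. Writing $\xi:=j-j_0-\alpha n$ and choosing a ``parabolic'' arc inside $B_\varepsilon(0)$ whose radius scales like $n^{-1/(2\mu)}$, the integral splits naturally according to the sign of $\xi$. When $j-j_0\le\alpha n/2$ (domain \textbf{I}) the linear part of $\Phi$ is favourable on the segment $\Re\tau=-\eta$ and one gains the crude bound $\exp(-cn)$ directly. When $j-j_0$ is close to or exceeds $\alpha n$ (domains \textbf{II}--\textbf{IV}), I would pick the contour $\tau(\ell)=\tau_\star+\mathbf{i}\ell$ where $\tau_\star\in\R$ is chosen so that $\Phi'(\tau_\star)=0$ — explicitly $\tau_\star^{2\mu-1}\sim -\xi/\bigl(2\mu\beta(j-j_0)/\alpha^{2\mu}\bigr)$ up to an $(2\mu-1)$th root — and then use the lower bound on $-\Re\varpi$ provided by Lemma~\ref{lem8}, namely $\Re\varpi(\tau)\le -\Re(\tau)/\alpha + \beta^*\Re(\tau)^{2\mu}/\alpha^{2\mu+1} - \beta_*\Im(\tau)^{2\mu}/\alpha^{2\mu+1}$, to control the transverse $\mathbf{i}\ell$ direction. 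Optimizing in $\tau_\star$ produces precisely the exponent $\bigl(|\xi|/n^{1/(2\mu)}\bigr)^{2\mu/(2\mu-1)}$ and the prefactor $n^{-1/(2\mu)}$.

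The expected main obstacle is the saddle-point calculation in the intermediate regime, where one must simultaneously (i) keep the parabolic contour inside the ball $B_\varepsilon(0)$, which forces a case split when $|\xi|$ becomes of order $n$; (ii) control the higher-order remainder $O(|\tau|^{2\mu+1})$ in $\varpi$ uniformly, so that it does not destroy the Gaussian-type bound coming from the $\theta^{2\mu}$ term; and (iii) glue the parabolic central arc smoothly with the vertical tails $\Re\tau=-\eta_\varepsilon$ around each $\mathbf{i}\,\underline{\theta}_k$, where the analysis of Lemma~\ref{lem8} provides a uniform $\ell^1$-type bound on the holomorphic remainder $\mathbf{R}_\tau$. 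Once these three points are settled, the bound of Proposition~\ref{prop1} follows by summing the estimates over the various pieces of $\Gamma_*$, the dominant contribution being that of the parabolic arc near $0$. Power-boundedness of $\mathcal{T}$ in $\mathcal{H}$ then follows by noting that the envelope is summable in $j$ uniformly in $(n,j_0)$, and that the profiles $\underline{{\bf w}}_k(\cdot,j_0)$ satisfy \eqref{bornesprofils}.
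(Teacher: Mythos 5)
Your proposal follows the paper's strategy exactly: deform the contour to $\Re\tau=-\eta$ near the poles $\mathbf{i}\,\underline\theta_k$ (extracting the bounded-in-$n$ residue $\sum_k\underline{\bf w}_k(j,j_0)\,e^{n\mathbf{i}\underline\theta_k}$) and to a saddle-point arc near the origin, then estimate via the squeeze $\Re\varpi(\tau)\le-\Re\tau/\alpha+\beta^*\Re(\tau)^{2\mu}/\alpha^{2\mu+1}-\beta_*\Im(\tau)^{2\mu}/\alpha^{2\mu+1}$ from Lemma~\ref{lem8}, clamping the saddle $\tau_\star=\rho(\zeta/\gamma)$ to the interval $[-\eta/2,\varepsilon_0]$ just as you anticipate. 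Your vertical segment through $\tau_\star$ is a cosmetic variant of the paper's $2\mu$-parabolic level curve $\Gamma_p$ (both close off against $\Gamma_{-\eta}$ and optimize the same exponent $(1-2\mu)\gamma(|\zeta|/\gamma)^{2\mu/(2\mu-1)}$), and your three listed obstacles are precisely the ones resolved by Lemmas~\ref{lem9}--\ref{lem12}.
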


From now on, we fix $0<\eta<\eta_\varepsilon$ such that  the segment $\left\{-\eta +\mathbf{i} \, \ell~|~\ell\in[-\pi,\pi]\right\}$ intersects $\partial B_\varepsilon(0)$ 
outside the curve \eqref{curve-spectrum} of essential spectrum of $\mathcal{T}$ near the origin. We are going to distinguish several cases depending on the 
relative position between $j-j_0$ and $n$, as sketched in Figure~\ref{fig:spacetime}. Formally, we will use different contours of integration in \eqref{lapgreentau} 
depending  if $j-j_0$ is near $n \, \alpha$ or away from  $n \, \alpha$. Indeed, when $j-j_0 \sim n \, \alpha$, we expect to have Gaussian-like bounds coming 
from the contribution in $B_\varepsilon(0)$ near the origin where the essential spectrum of $\T$ touches the imaginary axis. In that case, we will use contours 
similar to \cite{godillon,CF1} and that were already introduced in the continuous setting in \cite{ZH98}. Let us note that unlike in \cite{CF1}, we have isolated 
poles on the imaginary axis given by the $\underline{\tau}_k=\mathbf{i} \, \underline{\theta}_k$, $k=1,\cdots,K$, whose contributions in \eqref{lapgreentau} 
will be handled via Cauchy's formula and the residue theorem. We thus divide the analysis into a medium range, that is for those values of $j-j_0$ away from 
$n \, \alpha$, and short range when $j-j_0$ is near $n \, \alpha$. More specifically, we decompose our domain as
\begin{itemize}
\item[$\bullet$] Medium range: $- \, n \, p \leq j-j_0 < n \, \frac{\alpha}{2} \, $;
\item[$\bullet$] Short range: $n \, \frac{\alpha}{2} \leq j-j_0 \leq n \, r \, $;
\end{itemize}
where we recall that $\alpha=\lambda \, a>0$ from our consistency condition.

\subsection{Medium range}

In this section, we consider the medium range where $- \, n \, p\leq j-j_0< n \frac{\alpha}{2}$. In order to simplify the presentation, we first treat the case where 
$- \, n \, p \leq j-j_0 \leq 0$ and then consider the range $1 \leq j-j_0 < n \, \frac{\alpha}{2} $.

\begin{lemma}
\label{lem9}
There exist constants $C>0$ and $c>0$, such that for all integers $j,j_0,n$ satisfying $- \, n \, p\leq j-j_0\leq 0$, the temporal Green's function satisfies
\bqs
\left| \, \G^{\, n}(j,j_0) \, - \, \sum_{k=1}^K \, \underline{{\bf w}}_k(j,j_0) \, {\rm e}^{\, n \, \mathbf{i} \, \underline{\theta}_k} \, \right| \, \leq \, 
C \, {\rm e}^{- \, n \, \eta \, - \, c \, |j \, - \, j_0|} \, .
\eqs
\end{lemma}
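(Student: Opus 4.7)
The plan is to push the contour of integration in \eqref{lapgreentau} from $\Gamma$, which lies at $\Re(\tau) = s > 0$, onto the shifted vertical segment $\Gamma_\eta := \{-\eta + \mathbf{i}\,\ell \,:\, \ell \in [-\pi,\pi]\}$. Because both $e^{n\tau}$ and $\mathbf{G}_\tau$ are $2\pi\mathbf{i}$-periodic in $\tau$ (the latter since $G_z$ depends only on $z = e^{\tau}$), the horizontal segments that would close the rectangle at $\Im(\tau) = \pm\pi$ cancel one another, and the deformation effectively takes place inside the rectangle $\{-\eta \le \Re(\tau) \le s,\, |\Im(\tau)| \le \pi\}$. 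By Lemma~\ref{lem8}, the integrand $e^{n\tau}\,\mathbf{G}_\tau(j,j_0)$ is holomorphic in this rectangle except at the $K$ simple poles $\mathbf{i}\,\underline{\theta}_k$ on the imaginary axis; in particular $\mathbf{G}_\tau$ is holomorphic throughout $B_\varepsilon(0)$, so the contour crosses the imaginary axis near the origin without obstruction (which is precisely what the condition $\eta < \eta_\varepsilon$ guarantees).

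Applying the residue theorem and reading each residue off the local decomposition $\mathbf{G}_\tau = \mathbf{R}_\tau + \underline{{\bf w}}_k/(\tau - \mathbf{i}\underline{\theta}_k)$ provided by Lemma~\ref{lem8}, one obtains
\begin{equation*}
\G^{\,n}(j,j_0) \, = \, \sum_{k=1}^K e^{n \, \mathbf{i} \, \underline{\theta}_k} \, \underline{{\bf w}}_k(j,j_0) \, + \, \dfrac{1}{2 \, \pi \, \mathbf{i}} \int_{\Gamma_\eta} e^{n\tau} \, \mathbf{G}_\tau(j,j_0) \, \md\tau \, .
\end{equation*}
The lemma is thus reduced to showing that the remaining integral is bounded by $C\,e^{-n\eta - c|j-j_0|}$.

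Since $|e^{n\tau}| = e^{-n\eta}$ on $\Gamma_\eta$, it suffices to bound $\mathbf{G}_\tau(j,j_0)$ by $C\,e^{-c|j-j_0|}$ uniformly on $\Gamma_\eta$, up to a controlled contribution from the arcs inside the balls $B_\varepsilon(\mathbf{i}\underline{\theta}_k)$. Outside these balls, Corollary~\ref{cor1} together with Lemma~\ref{lem8} applied inside $B_\varepsilon(0)$ supplies exactly this bound --- here the hypothesis $j \le j_0$ is crucial, since Lemma~\ref{lem8} only grants clean exponential decay inside $B_\varepsilon(0)$ in the regime $1 \le j \le j_0$; in the opposite regime the bound degenerates into $\exp(|j-j_0|\,\Re(\varpi(\tau)))$ and becomes useless at $\Re(\tau) = -\eta$. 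Inside each $B_\varepsilon(\mathbf{i}\underline{\theta}_k)$, one again splits $\mathbf{G}_\tau = \mathbf{R}_\tau + \underline{{\bf w}}_k/(\tau - \mathbf{i}\underline{\theta}_k)$: the remainder $\mathbf{R}_\tau$ is handled directly, while the polar part is controlled using $|\tau - \mathbf{i}\underline{\theta}_k| \ge \eta$ on $\Gamma_\eta$ together with the bound $|\underline{{\bf w}}_k(j,j_0)| \le C\,e^{-c(j+j_0)} \le C\,e^{-c|j-j_0|}$ coming from \eqref{bornesprofils} and the elementary inequality $j + j_0 \ge |j-j_0|$ for $j,j_0 \ge 1$. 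Since each such arc has length at most $2\varepsilon$, summing all contributions yields the required estimate on the integral.

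I do not expect a genuine obstacle in this range: the point of isolating $j - j_0 \le 0$ is precisely that the contour can remain a straight vertical segment, because the bound on $\mathbf{G}_\tau$ inside $B_\varepsilon(0)$ does not deteriorate. The forthcoming ranges, in which $j - j_0$ grows toward or beyond $n\alpha/2$, will instead require the delicate parabolic deformation near the origin suggested by Figure~\ref{fig:spacetime}, in order to exploit the quantitative expansion of $\varpi(\tau)$ from Lemma~\ref{lem8}.
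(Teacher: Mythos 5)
Your proposal is correct and follows essentially the same route as the paper: deform the vertical contour $\Gamma$ to $\Gamma_{-\eta}$, invoke periodicity to dismiss the horizontal segments, extract the pole contributions $e^{n\mathbf{i}\underline{\theta}_k}\,\underline{\bf w}_k(j,j_0)$ by the residue theorem via the local decomposition from Lemma~\ref{lem8}, and then bound the remaining integral by $e^{-n\eta}$ times a uniform $e^{-c|j-j_0|}$ bound on $\mathbf{G}_\tau$ along $\Gamma_{-\eta}$, using the regime $1\le j\le j_0$ inside $B_\varepsilon(0)$ and the inequality $j+j_0\ge |j-j_0|$ to control the polar part near each $\mathbf{i}\underline{\theta}_k$. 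The only cosmetic difference is that you spell out the $2\pi\mathbf{i}$-periodicity and the lower bound $|\tau-\mathbf{i}\underline{\theta}_k|\ge\eta$ explicitly, which the paper leaves implicit.
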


\begin{proof}
We first recall that 
$$
\G^{\, n}(j,j_0) \, = \, \dfrac{1}{2 \, \pi \, \mathbf{i}} \, \int_{\Gamma} \, {\rm e}^{\, n \, \tau} \, \mathbf{G}_\tau(j,j_0) \, \md \tau \, ,
$$
with $\Gamma=\left\{ s+\mathbf{i} \, \ell ~|~ \ell \in [-\pi,\pi] \right\}$ for any $0<s \leq \pi$. Next, we denote $\Gamma_{-\eta}=\left\{ -\eta+\mathbf{i} \, \ell ~|~ \ell 
\in [-\pi,\pi] \right\}$. Using the residue theorem, we obtain that
\bqs
\dfrac{1}{2 \, \pi \, \mathbf{i}} \, \int_{\Gamma} \, {\rm e}^{\, n \, \tau} \, \mathbf{G}_\tau(j,j_0) \, \md \tau \, = \, 
\underbrace{ \dfrac{1}{2 \, \pi \, \mathbf{i}} \, \int_{\Gamma_{-\eta}} \, {\rm e}^{\, n \, \tau} \, \mathbf{G}_\tau(j,j_0) \, \md \tau}_{:= \, \widetilde{\G}^{\, n}(j,j_0)} 
\, + \, \sum_{k=1}^K \, \mathrm{Res} \Big( \tau \mapsto {\rm e}^{\, n \, \tau} \, \mathbf{G}_\tau(j,j_0) , \underline{\tau}_k \Big) \, ,
\eqs
where we readily have that
\bqs
\sum_{k=1}^K \, \mathrm{Res} \Big( \tau \mapsto {\rm e}^{\, n \, \tau} \, \mathbf{G}_\tau(j,j_0) , \underline{\tau}_k \Big) \, = \, 
\sum_{k=1}^K \, \underline{{\bf w}}_k(j,j_0) \, {\rm e}^{\, n \, \mathbf{i} \, \underline{\theta}_k} \, ,
\eqs
from Lemma~\ref{lem8}. Here, and throughout, we use the fact that the integrals along $\left\{ -v \pm \mathbf{i} \, \pi ~|~ v \in [-\eta,s] \right\}$ compensate each 
other. Now, $\Gamma_{-\eta}$ intersects each ball $B_\varepsilon(\mathbf{i} \, \underline{\theta}_k)$ and we denote $\Gamma_{-\eta}^k := 
\Gamma_{-\eta} \cap B_\varepsilon(\mathbf{i} \, \underline{\theta}_k)$. Using once again Lemma~\ref{lem8}, we have for each $\tau \in \Gamma_{-\eta}^k$
\bqs
\left| \, \mathbf{G}_\tau(j,j_0) \, \right| \, \leq \, \left| \, \mathbf{R}_\tau(j,j_0) \, \right| \, + \, 
\left| \, \dfrac{\underline{{\bf w}}_k(j,j_0)}{\tau-\mathbf{i} \, \underline{\theta}_k} \, \right| 
\, \leq \, C \, {\rm e}^{- \, c \, |j-j_0|} \, + \, C_0 \, {\rm e}^{- \, c \, |j+j_0|} \, \leq \, C_1 \, {\rm e}^{- \, c \, |j-j_0|} \, ,
\eqs
for some positive constants $C_{0,1}>0$. Finally, we remark that for $- \, n \, p \leq j-j_0 \leq 0$, we have
\bqs
\forall \, \tau \in \Big\{ \omega \in \C \, | \, - \, \eta_\varepsilon \, < \, \Re(\omega) \, \le \, \pi \, \Big\} \, \setminus \, \left( \, 
\bigcup_{k=1}^K B_\varepsilon(\mathbf{i} \, \underline{\theta}_k) \, \right) \, , \, \quad \big| \, \mathbf{G}_\tau(j,j_0) \, \big| \, \leq \, C \, {\rm e}^{- \, c \, |j \, - \, j_0| } \, ,
\eqs
such that in fact, for all $\tau \in \Gamma_{-\eta}$ and $- \, n \, p \leq j-j_0 \leq 0$ we have the following bound
\bqs
\big| \, \mathbf{G}_\tau(j,j_0) \, \big| \, \leq \, C \, {\rm e}^{- \, c \, |j \, - \, j_0| } \, .
\eqs
The estimate on $\widetilde{\G}^{\, n}(j,j_0)$ easily follows and concludes the proof.
\end{proof}

Next, we consider the range $1 \leq j-j_0 < n \, \frac{\alpha}{2} $. This time, the spatial Green's function $\mathbf{G}_\tau(j,j_0)$ satisfies a different bound 
in $B_\varepsilon(0)$. Nevertheless, we can still obtain some strong decaying estimates which are summarized in the following lemma.

\begin{lemma}
\label{lem10}
There exists a constant $C>0$ such that for all integers $j,j_0,n$ satisfying $1 \leq j-j_0 < n \, \frac{\alpha}{2}$, the temporal Green's function satisfies
\bqs
\left| \, \G^{\, n}(j,j_0) \, - \, \sum_{k=1}^K \, \underline{{\bf w}}_k(j,j_0) \, {\rm e}^{\, n \, \mathbf{i} \, \underline{\theta}_k} \, \right| \, \leq \, 
C \, {\rm e}^{- \, n \, \frac{\eta}{4}} \, .
\eqs
\end{lemma}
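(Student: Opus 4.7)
The plan is to start from the contour integral representation \eqref{lapgreentau} and shift the contour $\Gamma$ to $\Gamma_{-\eta} = \{-\eta + \mathbf{i}\,\ell \,|\, \ell \in [-\pi,\pi]\}$, exactly as was done in Lemma~\ref{lem9}. By Cauchy's residue theorem (together with the cancellation between the horizontal segments at $\Im \tau = \pm \pi$) this shift produces the sum $\sum_{k=1}^K \underline{{\bf w}}_k(j,j_0) \, {\rm e}^{\, n \, \mathbf{i} \, \underline{\theta}_k}$ coming from the simple poles at $\underline{\tau}_k = \mathbf{i} \, \underline{\theta}_k$, so that it remains to bound the integral
\[
\widetilde{\G}^{\, n}(j,j_0) \, = \, \frac{1}{2 \, \pi \, \mathbf{i}} \, \int_{\Gamma_{-\eta}} \, {\rm e}^{\, n \, \tau} \, \mathbf{G}_\tau(j,j_0) \, \md \tau
\]
by $C \, {\rm e}^{- \, n \, \eta/4}$. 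I then split $\Gamma_{-\eta}$ into three pieces: the portion inside $B_\varepsilon(0)$, the portions inside each ball $B_\varepsilon(\mathbf{i}\,\underline{\theta}_k)$, and the remaining part of $\Gamma_{-\eta}$.

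On the remaining part, which lies in $\Omega_\varepsilon$, the uniform bound $|\mathbf{G}_\tau(j,j_0)| \le C \, {\rm e}^{-c|j-j_0|}$ from Lemma~\ref{lem8} combines with $|{\rm e}^{n\tau}| = {\rm e}^{-n\eta}$ to give a contribution controlled by $C \, {\rm e}^{-n\eta}$. On each piece inside $B_\varepsilon(\mathbf{i}\,\underline{\theta}_k)$, I decompose $\mathbf{G}_\tau = \mathbf{R}_\tau + \underline{{\bf w}}_k/(\tau - \mathbf{i}\,\underline{\theta}_k)$; the first summand is bounded by $C\,{\rm e}^{-c|j-j_0|}$, and integrating the second against ${\rm e}^{n\tau}$ along a line at distance $\eta$ from the pole uses that $|\tau - \mathbf{i}\,\underline{\theta}_k| \ge \eta$ together with the strong decay $|\underline{{\bf w}}_k(j,j_0)| \le C\,{\rm e}^{-c(j+j_0)}$ from \eqref{bornesprofils}. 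Both of these yield a contribution at most $C\,{\rm e}^{-n\eta}$.

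The main step, and the one where the hypothesis $1 \le j-j_0 < n \, \alpha/2$ truly enters, is the part of $\Gamma_{-\eta}$ lying inside $B_\varepsilon(0)$. There I parameterize $\tau = -\eta + \mathbf{i}\,\ell$ and use the second bound of Lemma~\ref{lem8}, which in the regime $j > j_0$ reads $|\mathbf{G}_\tau(j,j_0)| \le C \, \exp\bigl((j-j_0)\,\Re \varpi(\tau)\bigr)$. Setting $m = j-j_0$ and applying the explicit estimate on $\Re \varpi$ from Lemma~\ref{lem8} gives
\[
\bigl|\, {\rm e}^{n\tau} \, \mathbf{G}_\tau(j,j_0) \,\bigr| \, \le \, C \, \exp \left( \, -\,n\,\eta \, + \, \frac{m\,\eta}{\alpha} \, + \, \frac{m\,\beta^*\,\eta^{2\mu}}{\alpha^{2\mu+1}} \, - \, \frac{m\,\beta_*\,\ell^{2\mu}}{\alpha^{2\mu+1}} \, \right) \, .
\]
The crucial point is that the constraint $m < n\,\alpha/2$ forces $m\,\eta/\alpha < n\,\eta/2$, and since $m \le n\,r$ the term $m\,\eta^{2\mu}$ is absorbed into the exponential decay once $\eta$ is fixed sufficiently small. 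Hence the exponent is at most $-n\,\eta/4 - c\,m\,\ell^{2\mu}$, and integration over $\ell \in [-\sqrt{\varepsilon^2-\eta^2}, \sqrt{\varepsilon^2-\eta^2}]$ produces a bound $C\,m^{-1/(2\mu)}\,{\rm e}^{-n\eta/4}$, which is in particular $\le C\,{\rm e}^{-n\eta/4}$.

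Collecting the three pieces, the right-hand side is controlled by $C\,{\rm e}^{-n\eta/4}$, which is the stated estimate. The only mildly delicate point in the whole argument is making sure that $\eta$ is chosen small enough (in particular $\eta < \eta_\varepsilon$ and $\eta < \varepsilon$, and such that $C\,\eta^{2\mu-1} < 1/(4r)$) so that the ``bad'' cross term $m\,\eta/\alpha + C\,m\,\eta^{2\mu}$ is genuinely dominated by $n\,\eta/2$; this is where the strict inequality $m < n\,\alpha/2$ matters and why the range $j - j_0 \ge n\,\alpha/2$ has to be treated separately by a different contour in Section~3.
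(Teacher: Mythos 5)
Your proof follows exactly the paper's strategy: deform $\Gamma$ to $\Gamma_{-\eta}$, collect the residues at $\mathbf{i}\,\underline\theta_k$ via the residue theorem, bound the pieces outside $B_\varepsilon(0)$ (including the pole balls) by $C\,{\rm e}^{-n\eta}$ using the Lemma~\ref{lem8} estimates for $\mathbf{G}_\tau$ and the decomposition $\mathbf{G}_\tau=\mathbf{R}_\tau+\underline{\mathbf{w}}_k/(\tau-\mathbf{i}\underline\theta_k)$, and control $\Gamma_{-\eta}\cap B_\varepsilon(0)$ by the $\Re\varpi(\tau)$ bound, absorbing the $\eta^{2\mu}$ cross term by choosing $\eta$ small. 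The only cosmetic differences are that you invoke $m\le nr$ rather than the sharper available $m<n\alpha/2$ for the cross term (both work) and you retain the $\Im(\tau)^{2\mu}$ term to extract an extra $m^{-1/(2\mu)}$ factor that the paper simply discards; neither changes the argument in any essential way.
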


\begin{proof}
The beginning of the proof follows similar lines as the ones in the proof of Lemma~\ref{lem9}. We deform the initial contour $\Gamma$ to $\Gamma_{-\eta}$, 
and using the residue theorem we get
\bqs
\G^{\, n}(j,j_0) \, = \, \sum_{k=1}^K \, \underline{{\bf w}}_k(j,j_0) \, {\rm e}^{\, n \, \mathbf{i} \, \underline{\theta}_k} \, + \, 
\dfrac{1}{2 \, \pi \, \mathbf{i}} \, \int_{\Gamma_{-\eta}} \, {\rm e}^{\, n \, \tau} \, \mathbf{G}_\tau(j,j_0) \, \md \tau \, .
\eqs
We denote by $\Gamma^{in}_{-\eta}$ and $\Gamma^{out}_{-\eta}$ the portions of $\Gamma_{-\eta}$ which lie either inside or outside  $B_\varepsilon(0)$. 
Note that the analysis along $\Gamma^{out}_{-\eta}$ is similar as in Lemma~\ref{lem9}, and we already get the estimate
\bqs
\left| \frac{1}{2 \, \pi \, \mathbf{i}} \, \int_{\Gamma_{-\eta}^{out}} \, {\rm e}^{\, n \, \tau} \, \mathbf{G}_\tau(j,j_0) \, \md \tau \right| 
\, \leq \, C \, {\rm e}^{- \, n \, \eta \, - \, c \, |j-j_0|} \, \leq \, C \, {\rm e}^{- \, n \, \frac{\eta}{4}} \, .
\eqs
Along $\Gamma^{in}_{-\eta}$, we compute
\bqs
\left| \frac{1}{2 \, \pi \, \mathbf{i}} \, \int_{\Gamma_{-\eta}^{in}} \, {\rm e}^{\, n \, \tau} \, \mathbf{G}_\tau(j,j_0) \, \md \tau \right| 
\, \leq \, C \, {\rm e}^{- \, n \, \eta} \, \int_{\Gamma_{-\eta}^{in}} \, {\rm e}^{\, |j-j_0| \, \Re(\varpi(\tau))} \, |\md \tau| \, .
\eqs
Next, for all $\tau \in \Gamma^{in}_{-\eta}$ we have
\begin{align*}
\Re(\varpi(\tau)) & \leq - \, \dfrac{1}{\alpha} \, \Re(\tau) - \underbrace{\dfrac{\beta_*}{\alpha^{\, 2 \, \mu+1}} \, \Im(\tau)^{2 \, \mu}}_{\le 0} 
+\dfrac{\beta^*}{\alpha^{\, 2 \, \mu+1}} \, \Re(\tau)^{2\mu} \\
& \leq \dfrac{\eta}{\alpha} \, + \dfrac{\beta^*}{\alpha^{\, 2 \, \mu+1}} \, \eta^{2\mu}\,. 
\end{align*}
As a consequence, 
\begin{align*}
- \, n \, \eta \, + \, |j-j_0| \, \Re(\varpi(\tau)) \, & \leq \, n \, \eta \, \left( 
- \, 1 \, + \, \dfrac{|j-j_0|}{n \, \alpha} \, + \, \dfrac{|j-j_0|}{n} \, \dfrac{\beta^*}{\alpha^{\, 2 \, \mu+1}} \, \eta^{2\mu-1} \right) \\ 
&  \leq \, n \, \eta \, \left( 
- \, \dfrac{1}{2} \, + \, \dfrac{\beta^*}{2 \, \alpha^{\, 2 \, \mu}} \, \eta^{2\mu-1} \right) \, \leq \, - \, n \, \frac{\eta}{4} \, ,
\end{align*}
provided that $\eta$ is chosen small enough (the choice only depends on $\beta^*$ and $\alpha$).
\end{proof}

\subsection{Short range}

Throughout this section, we assume that $n\geq 1$ and $n \, \frac{\alpha}{2} \leq j-j_0 \leq n \, r$. Following \cite{ZH98,godillon,CF1}, we introduce a 
family of parametrized curves given by
\bqq
\Gamma_p \, := \, \left\{ \, \Re(\tau) \, - \, \dfrac{\beta^*}{\alpha^{\, 2 \, \mu}} \,  \Re(\tau)^{2 \, \mu} \, + \, \dfrac{\beta_*}{\alpha^{\, 2 \, \mu}} \, \Im(\tau)^{2 \, \mu} 
\, = \, \Psi (\tau_p)~|~ -\eta \leq \Re(\tau)\leq \tau_p \, \right\}
\label{contourGp}
\eqq
with $\Psi(\tau_p) \, := \, \tau_p - \, \frac{\beta^*}{\alpha^{\, 2 \, \mu}} \, \tau_p^{\, 2 \, \mu}$. Note that these curves intersect the real axis at $\tau_p$. 
We also let
\bqs
\zeta \, := \, \dfrac{j-j_0 \, - \, n \, \alpha}{2 \, \mu \, n} \, ,\quad \text{ and } \quad \gamma \, := \, \dfrac{j-j_0}{n} \, \dfrac{\beta^*}{\alpha^{\, 2 \, \mu}} \, > \, 0 \, ,
\eqs
and  define $\rho\left(\frac{\zeta}{\gamma}\right)$ as the unique real root to the equation
\bqs
- \, \zeta \, + \, \gamma \, x^{2 \, \mu-1} \, = \, 0 \, ,
\eqs
that is 
\bqs
\rho \left(\frac{\zeta}{\gamma}\right) \, := \, \mathrm{sgn}(\zeta) \, \left( \frac{|\zeta|}{\gamma} \right)^{\frac{1}{2 \, \mu-1}} \, .
\eqs
The specific  value of $\tau_p$ is now fixed depending on the ratio $\frac{\zeta}{\gamma}$ as follows
\bqs
\tau_p \, := \, \left\{
\begin{split}
-\dfrac{\eta}{2} &\quad \text{ if } \quad \rho\left(\frac{\zeta}{\gamma}\right) <- \frac{\eta}{2} \, ,\\
\rho\left(\frac{\zeta}{\gamma}\right) &\quad \text{ if } \quad  -\frac{\eta}{2}\leq \rho\left(\frac{\zeta}{\gamma}\right) \leq \varepsilon_0\,,\\
\varepsilon_0 &\quad \text{ if } \quad \rho\left(\frac{\zeta}{\gamma}\right) > \varepsilon_0 \, , 
\end{split}
\right.
\eqs
where $0<\varepsilon_0<\varepsilon$ is chosen such that $\Gamma_p$ with $\tau_p=\varepsilon_0$ intersects the segment $\left\{-\eta+\mathbf{i} \, \ell 
~|~ \ell \in[-\pi,\pi]\right\}$ precisely on the boundary\footnote{This is possible because the curves $\Gamma_p$ are symmetric with respect to the real 
axis.} of $B_\varepsilon(0)$. Finally, let us note that as $n \, \frac{\alpha}{2} \leq j-j_0 \leq n \, r$, we have $-\frac{\alpha}{4 \, \mu} \leq \zeta \leq 
\frac{r-\alpha}{2 \, \mu}$. As $r > \alpha =\lambda \, a$ (see Lemma \ref{lem:Bernstein}), the region where $-\frac{\eta}{2} \leq \rho \left( \frac{\zeta}{\gamma} 
\right) \leq \varepsilon_0$ holds is not empty. From now on, we will treat each subcase separately.

\begin{figure}[t!]
  \centering
  \includegraphics[width=.45\textwidth]{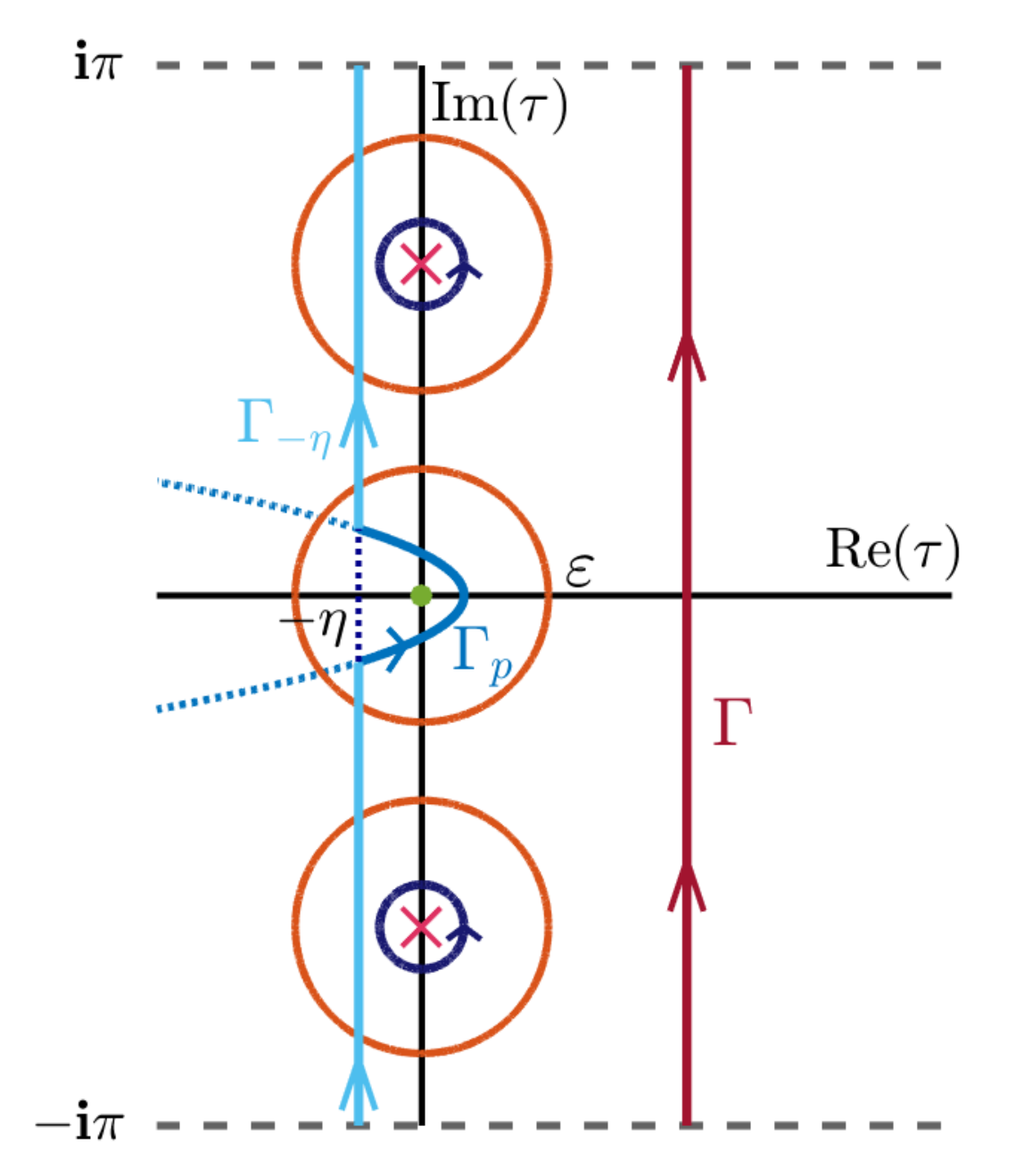}
  \caption{Illustration of the contour used in the case $-\frac{\eta}{2} \leq \rho \left( \frac{\zeta}{\gamma} \right) \leq \varepsilon_0$ when $n \, \frac{\alpha}{2} 
  \leq j-j_0 \leq n \, r$. We deform the initial contour $\Gamma$ (dark red) into the contour which consists of the parametrize curve $\Gamma_p$ near the 
  origin (blue) and a portion of the segment $\Gamma_{-\eta} = \left\{ -\eta +\mathbf{i} \, \ell ~|~ \ell \in[-\pi,\pi] \right\}$ (light blue). Note that $\Gamma_{-\eta} 
  \cap B_\varepsilon (0) \neq \emptyset$. Near each pole on the imaginary axis (pink cross), we use the residue theorem which is symbolized by the small 
  oriented circles (dark blue) surrounding each of them.}
  \label{fig:contour}
\end{figure}

\begin{lemma}
\label{lem11}
There exist constants $C>0$ and $M>0$ such that for $n\geq 1$ and $-\frac{\eta}{2} \leq \rho \left( \frac{\zeta}{\gamma} \right) \leq \varepsilon_0$, the following 
estimate holds:
\bqs
\left| \, \G^{\, n}(j,j_0) \, - \, \sum_{k=1}^K \, \underline{{\bf w}}_k(j,j_0) \, {\rm e}^{\, n \, \mathbf{i} \, \underline{\theta}_k} \, \right| \, \leq \, 
\dfrac{C}{n^{\frac{1}{2 \, \mu}}} \, \exp \left( - \, \dfrac{1}{M} \, \left( \dfrac{|j -j_0 -\alpha \, n|}{n^{\frac{1}{2 \, \mu}}} \right)^{\frac{2 \, \mu}{2 \, \mu-1}} \right) \, .
\eqs
\end{lemma}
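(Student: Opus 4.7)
The approach is a saddle-point analysis on the contour $\Gamma_p$. I would first deform the original contour $\Gamma$ in \eqref{lapgreentau} into the union of $\Gamma_p$ (inside $B_\varepsilon(0)$) and the portion of $\Gamma_{-\eta}$ lying outside $B_\varepsilon(0)$, the two pieces joining on $\partial B_\varepsilon(0)$ by the very choice of $\varepsilon_0$ (see Figure~\ref{fig:contour}). Since the deformation crosses the imaginary axis, the residue theorem yields, at each pole $\mathbf{i}\underline{\theta}_k$ of $\mathbf{G}_\tau(j,j_0)$, a contribution $\underline{{\bf w}}_k(j,j_0)\,{\rm e}^{n\mathbf{i}\underline{\theta}_k}$ that matches exactly the subtraction in the statement. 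The integral on the outside portion of $\Gamma_{-\eta}$ is controlled as in the proof of Lemma~\ref{lem10}, producing an $\mathcal{O}({\rm e}^{-n\eta/4})$ bound which, since $|j-j_0-\alpha n| \le Cn$ in the short range, is absorbed into the target Gaussian-type estimate.

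The heart of the argument is the integral along $\Gamma_p$. Since $j > j_0$ throughout the short range, Lemma~\ref{lem8} gives $|\mathbf{G}_\tau(j,j_0)| \le C\exp((j-j_0)\Re(\varpi(\tau)))$ on $B_\varepsilon(0)$, so the modulus of the integrand is at most $C\exp(n\Re(\tau) + (j-j_0)\Re(\varpi(\tau)))$. The crucial algebraic observation is that combining the quadratic bound on $\Re(\varpi(\tau))$ from Lemma~\ref{lem8} with $(j-j_0)/\alpha$ times the defining identity of $\Gamma_p$, the $\Im(\tau)^{2\mu}$ contributions cancel exactly, yielding
\bqs
n\,\Re(\tau) + (j-j_0)\Re(\varpi(\tau)) \,\le\, n\,\Re(\tau) - \tfrac{j-j_0}{\alpha}\,\Psi(\tau_p).
\eqs
The transverse decay in $\Im(\tau)$ is then recovered from the constraint alone: writing $h(x) := x - (\beta^*/\alpha^{2\mu})x^{2\mu}$, which is strictly increasing on $[-\eta,\varepsilon_0]$ provided $\eta$ and $\varepsilon_0$ are chosen small enough, the $\Gamma_p$-equation $h(\Re(\tau)) = h(\tau_p) - (\beta_*/\alpha^{2\mu})\Im(\tau)^{2\mu}$ and the mean value theorem give $\Re(\tau) \le \tau_p - c\,\Im(\tau)^{2\mu}$ for a uniform $c>0$. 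Substituting,
\bqs
n\,\Re(\tau) + (j-j_0)\Re(\varpi(\tau)) \,\le\, n\tau_p - \tfrac{j-j_0}{\alpha}\Psi(\tau_p) - c\,n\,\Im(\tau)^{2\mu}.
\eqs

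The constant piece $n\tau_p - (j-j_0)\Psi(\tau_p)/\alpha$ is the saddle-point value. Using the defining relation $\tau_p^{2\mu-1} = \zeta/\gamma$ of $\tau_p = \rho(\zeta/\gamma)$, a direct computation rewrites it as $-(2\mu-1)\,n\zeta\tau_p/\alpha$, and substituting the definitions of $\zeta$, $\gamma$ and $\tau_p$ identifies it with $-C_*\bigl(|j-j_0-\alpha n|/n^{1/(2\mu)}\bigr)^{2\mu/(2\mu-1)}$ for some positive $C_*$, uniform in the short range since $\gamma$ stays between two positive constants there. Parametrizing $\Gamma_p \cap B_\varepsilon(0)$ by $y = \Im(\tau)$ (with bounded Jacobian, as one checks by differentiating the constraint), the integral is then majorized by
\bqs
C \exp\!\left(-C_*\Bigl(\tfrac{|j-j_0-\alpha n|}{n^{1/(2\mu)}}\Bigr)^{\!\tfrac{2\mu}{2\mu-1}}\right) \int_{-\varepsilon}^{\varepsilon} {\rm e}^{-c\,n\,y^{2\mu}}\,dy,
\eqs
and the rescaling $y = n^{-1/(2\mu)}s$ produces the required prefactor $n^{-1/(2\mu)}$, completing the estimate.

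The main obstacle is the cancellation identity combined with the monotonicity argument just described: the $\Im(\tau)^{2\mu}$ terms arising from the $\Re(\varpi)$ bound and the $\Gamma_p$-constraint must cancel exactly when weighted by $(j-j_0)/\alpha$ and $1$, and the transverse decay then has to be re-extracted from the constraint via the strict monotonicity of $h$ on the full range $[-\eta,\varepsilon_0]$ of values of $\Re(\tau)$. This interplay is what makes $\Gamma_p$ the correct contour for the short range: it simultaneously encodes the saddle-point value through $\tau_p$ and the transverse Gaussian-type decay through its $\Im(\tau)^{2\mu}$ dependence.
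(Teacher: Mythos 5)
Your argument for the integrals along $\Gamma_p$ and along the portion of $\Gamma_{-\eta}$ exterior to $B_\varepsilon(0)$ is correct and matches the paper: the cancellation of the $\Im(\tau)^{2\mu}$ terms using the $\Gamma_p$-constraint, the re-extraction of the transverse decay $\Re(\tau)\le\tau_p-c\,\Im(\tau)^{2\mu}$ from the monotonicity of $h$, the algebraic identification of the saddle value $n\tau_p-(j-j_0)\Psi(\tau_p)/\alpha$ with $-(2\mu-1)n\zeta\tau_p/\alpha$, and the $n^{-1/(2\mu)}$ prefactor from the Laplace-type integral in $\Im(\tau)$ are all as in the paper's proof.

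However, there is a genuine gap in the contour deformation. You claim that $\Gamma_p$ and the exterior portion of $\Gamma_{-\eta}$ ``join on $\partial B_\varepsilon(0)$ by the very choice of $\varepsilon_0$.'' That choice only guarantees this in the boundary case $\tau_p=\varepsilon_0$. In Lemma~\ref{lem11} one has $\tau_p=\rho(\zeta/\gamma)\in[-\eta/2,\varepsilon_0]$, so generically $\tau_p<\varepsilon_0$, and then $\Gamma_p$ terminates at the points $-\eta\pm\mathbf{i}\,\ell_*$ with $\ell_*<\sqrt{\varepsilon^2-\eta^2}$, i.e.\ strictly inside $B_\varepsilon(0)$. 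Your contour is therefore not closed: you are missing the two segments $\Gamma_{-\eta}^{in}=\{-\eta+\mathbf{i}\ell:\ell_*\le|\ell|\le\sqrt{\varepsilon^2-\eta^2}\}$, on which $\tau$ lies inside $B_\varepsilon(0)$ so the uniform exponential bound on $\mathbf{G}_\tau$ does not apply. Estimating this missing piece is not an afterthought: one must observe that $\Im(\tau)^{2\mu}\ge\ell_*^{2\mu}$ there and then use the defining relation of $\ell_*$ (namely $-\eta-\tfrac{\beta^*}{\alpha^{2\mu}}\eta^{2\mu}+\tfrac{\beta_*}{\alpha^{2\mu}}\ell_*^{2\mu}=\Psi(\tau_p)$) to conclude $\Re(\varpi(\tau))\le-\tfrac{\tau_p}{\alpha}+\tfrac{\beta^*}{\alpha^{2\mu+1}}\tau_p^{2\mu}$, after which the exponent $n\,\Re(\tau)+(j-j_0)\Re(\varpi(\tau))$ is controlled using $\Re(\tau)=-\eta$ and the fact that $\eta+\tau_p\ge\eta/2$ in this regime. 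Without that step the estimate on the full deformed contour does not close.
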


\begin{proof}
We will consider a contour depicted in Figure~\ref{fig:contour} which consists of the parametrized curve $\Gamma_p$ near the origin and otherwise is the 
segment $\Gamma_{-\eta}$. We will denote  $\Gamma_{-\eta}^{in}$ and $\Gamma_{-\eta}^{out}$, the portions of the segment $\Gamma_{-\eta}$ which lie 
either inside or outside $B_\varepsilon(0)$ with $|\Im(\tau)|\leq \pi$. Using the residue theorem, we have that
\bqs
\G^{\, n}(j,j_0) \, = \, \sum_{k=1}^K \, \underline{{\bf w}}_k(j,j_0) \, {\rm e}^{\, n \, \mathbf{i} \, \underline{\theta}_k} \, + \, 
\dfrac{1}{2 \, \pi \, \mathbf{i}} \, \int_{\Gamma_{-\eta}^{in} \cup \Gamma_{-\eta}^{out}} \, {\rm e}^{\, n \, \tau} \, \mathbf{G}_\tau(j,j_0) \, \md \tau 
\, + \, \dfrac{1}{2 \, \pi \, \mathbf{i}} \int_{\Gamma_p} \, {\rm e}^{\, n \, \tau} \, \mathbf{G}_\tau(j,j_0) \, \md \tau \, .
\eqs
Computations along $\Gamma_{-\eta}^{out}$ are similar to the previous cases, and we directly get
\bqs
\left| \, \dfrac{1}{2 \, \pi \, \mathbf{i}} \, \int_{\Gamma_{-\eta}^{out}} \, {\rm e}^{\, n \, \tau} \, \mathbf{G}_\tau(j,j_0) \, \md \tau \, \right| \, \leq \, 
C \, {\rm e}^{- \, n \, \eta \, - \, c \, |j-j_0|} \, .
\eqs
For all $\tau \in \Gamma_{-\eta}^{in}$, we use that $\Im(\tau)^2 \geq \Im(\tau_*)^2$ where $\tau_*=-\eta+\mathbf{i} \, \ell_*$ and $\ell_*>0$ is the positive 
root of
\bqs
- \, \eta \, - \dfrac{\beta^*}{\alpha^{\, 2 \, \mu}} \,  \eta^{\, 2 \, \mu} \, +\, \dfrac{\beta_*}{\alpha^{\, 2 \, \mu}} \, \ell_*^{\, 2 \, \mu} = \, \Psi (\tau_p) \, .
\eqs
That is, the point $\tau_*=-\eta+\mathbf{i} \, \ell_*$ lies at the intersection of $\Gamma_p$ and  the segment $\left\{ -\eta +\mathbf{i} \, \ell ~|~ \ell \in[-\pi,\pi] 
\right\}$ with $\tau_*\in B_\varepsilon(0)$. As a consequence, for all $\tau \in \Gamma_{-\eta}^{in}$ we have
\begin{align*}
\Re(\varpi(\tau)) \, & \leq \, \dfrac{\eta}{\alpha} \, + \, \dfrac{\beta^*}{\alpha^{\, 2 \, \mu+1}} \, \eta^{\, 2 \, \mu} 
\, - \, \dfrac{\beta_*}{\alpha^{\, 2 \, \mu+1}} \, \Im(\tau)^{\, 2 \, \mu} \\
&= \, - \, \dfrac{\tau_p}{\alpha} \,+ \, \dfrac{\beta^*}{\alpha^{\, 2 \, \mu+1}} \, \tau_p^{2 \, \mu} -\dfrac{\beta_*}{\alpha^{\, 2 \, \mu+1}} \, 
\underbrace{\left( \, \Im(\tau)^{2 \, \mu} \, - \, \ell_*^{2 \, \mu} \, \right)}_{\geq 0} \\
&\leq \, - \, \dfrac{\tau_p}{\alpha} \, + \, \dfrac{\beta^*}{\alpha^{\, 2 \, \mu+1}} \, \tau_p^{2 \, \mu} \, .
\end{align*}
Thus, we have
\begin{align*}
n \, \Re(\tau) \, + \, (j-j_0) \, \Re(\varpi(\tau)) & \leq \, - \, n \, \eta +(j-j_0) \, \left( - \, \dfrac{\tau_p}{\alpha} \, + \, 
\dfrac{\beta^*}{\alpha^{\, 2 \, \mu+1}} \, \tau_p^{\, 2 \, \mu} \,  \right) \\
&= \, \dfrac{n}{\alpha} \, \left[ \, - \, \eta \, \alpha \, + \, \dfrac{(j-j_0)}{n} \, \left( - \, \tau_p \, + \, \dfrac{\beta^*}{\alpha^{\, 2 \, \mu}} \, \tau_p^{\, 2 \, \mu} \right) \, \right] \\
&= \, \dfrac{n}{\alpha} \, \left[ \, - \, (\eta \, + \, \tau_p ) \, \alpha \, - \, 2 \, \mu \, \zeta \, \tau_p \, + \, \gamma \, \tau_p^{\, 2 \, \mu} \, \right] \\
&= \, \dfrac{n}{\alpha}  \, \left[ \, - \, (\eta \, + \, \tau_p ) \, \alpha \, + \, (1-2 \, \mu) \, \gamma \, \left( 
\dfrac{|\zeta|}{\gamma} \right)^{\frac{2 \, \mu}{2 \, \mu-1}} \, \right] \, ,
\end{align*}
for all $\tau\in\Gamma_{-\eta}^{in}$. Finally,  as $-\frac{\eta}{2} \leq \rho(\frac{\zeta}{\gamma})=\tau_p$ we have $\eta +\tau_p\geq \frac{\eta}{2}$, 
and we obtain an estimate of the form
\bqs
\left| \, \dfrac{1}{2 \, \pi \, \mathbf{i}} \, \int_{\Gamma_{-\eta}^{in}} \, {\rm e}^{n \, \tau} \, \mathbf{G}_\tau(j,j_0) \, \md \tau \right| \, \leq \, 
C \, {\rm e}^{- \, n \, \frac{\eta}{2} -\frac{n}{\alpha} \, (2 \, \mu-1) \, \gamma \, \left(\frac{|\zeta|}{\gamma}\right)^{\frac{2 \, \mu}{2 \, \mu-1}}} 
\, \leq \, C \, {\rm e}^{- \, n \, \frac{\eta}{2} \, - \, c \, n \, |\zeta|^{\frac{2 \, \mu}{2 \, \mu-1}}} \, ,
\eqs
since $\gamma$ is bounded from below and from above by positive constants.

We now turn our attention to the integral along $\Gamma_p$. We first notice that for all  $\tau \in \Gamma_p \subset B_\varepsilon(0)$, we have
\bqs
\Re(\tau) \, \leq \, \tau_p \, - \, c_* \, \Im(\tau)^{2 \, \mu} \, ,
\eqs
for some constant $c_*>0$. As a consequence, we obtain the upper bound
\begin{align*}
n \, \Re(\tau)+(j-j_0) \, \Re(\varpi(\tau)) & \leq n \, \Re(\tau) \, - \, \dfrac{j-j_0}{\alpha} \, \Re(\tau) \, + \, \dfrac{\beta^*(j-j_0)}{\alpha^{\, 2 \, \mu+1}} \, \Re(\tau)^{2 \, \mu} 
\, -  \, \dfrac{\beta_*(j-j_0)}{\alpha^{\, 2 \, \mu+1}} \, \Im(\tau)^{2 \, \mu} \\
& \leq \, n \, (\Re(\tau)-\tau_p) \, + \, \dfrac{n}{\alpha} \, \Big[ \, - \, 2 \, \mu \, \zeta \, \tau_p \,+ \, \gamma \, \tau_p^{\, 2 \, \mu} \, \Big] \\
& \leq \, - \, n \, c_* \, \Im(\tau)^{2 \, \mu} \, - \, \frac{n}{\alpha} \, (2 \, \mu-1) \, \gamma \, \left( \frac{|\zeta|}{\gamma} \right)^{\frac{2 \, \mu}{2 \, \mu-1}} \, ,
\end{align*}
for all  $\tau \in \Gamma_p \subset B_\varepsilon(0)$. As a consequence, we can derive the following bound
\begin{align*}
\left| \, \dfrac{1}{2 \, \pi \, \mathbf{i}} \, \int_{\Gamma_p} \, {\rm e}^{n \, \tau} \, \mathbf{G}_\tau(j,j_0) \, \md \tau \, \right| & \leq \, 
C \, \int_{\Gamma_p} {\rm e}^{\, n \, \Re(\tau) \, + \, (j-j_0) \, \Re(\varpi(\tau))} \, |\md \tau| \\
&\leq \, C \, {\rm e}^{- \, \frac{n}{\alpha} \,(2 \, \mu-1) \, \gamma \, \left( \frac{|\zeta|}{\gamma} \right)^{\frac{2 \, \mu}{2 \, \mu-1}}} 
\, \int_{\Gamma_p} \, {\rm e}^{- \, n \, \frac{c_*}{2} \, \Im(\tau)^{2 \, \mu}} \, |\md \tau| \\
&\leq \, C \, \dfrac{{\rm e}^{- \, \frac{n}{\alpha} \, (2 \, \mu-1) \, \gamma \, \left(\frac{|\zeta|}{\gamma}\right)^{\frac{2 \, \mu}{2 \, \mu-1}}}}{n^{\frac{1}{2 \, \mu}}} 
\, \leq \, C \, \dfrac{{\rm e}^{- \, c \, n \, |\zeta|^{\frac{2 \, \mu}{2 \, \mu-1}}}}{n^{\frac{1}{2 \, \mu}}} \, ,
\end{align*}
where we use again that $\gamma$ is bounded from below and from above by positive constants. At the end of the day, we see that the leading 
contribution is the one coming from the integral along $\Gamma_p$.
\end{proof}

\noindent Finally, we treat the last two cases altogether.

\begin{lemma}
\label{lem12}
There exist constants $C>0$ and $c_\star>0$ such that for $n \geq 1$ and $ \rho \left( \frac{\zeta}{\gamma} \right) < -\frac{\eta}{2}$ or $\rho \left( 
\frac{\zeta}{\gamma} \right) > \varepsilon_0$ there holds:
\bqs
\left| \, \G^{\, n}(j,j_0) \, - \, \sum_{k=1}^K \, \underline{{\bf w}}_k(j,j_0) \, {\rm e}^{\, n \, \mathbf{i} \, \underline{\theta}_k} \, \right| \, \leq \, 
C \, {\rm e}^{- \, n \, c_\star} \, .
\eqs
\end{lemma}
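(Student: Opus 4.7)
My plan for the proof of Lemma~\ref{lem12} is to closely follow the contour deformation strategy of Lemma~\ref{lem11}, using the same family of parametrized curves $\Gamma_p$ from \eqref{contourGp} but with $\tau_p$ pinned to one of the two endpoints of its admissible range: $\tau_p=-\eta/2$ in the first subcase and $\tau_p=\varepsilon_0$ in the second. First I would deform the original segment $\Gamma$ into $\Gamma_p\cup\Gamma_{-\eta}^{in}\cup\Gamma_{-\eta}^{out}$ (with the understanding that in some subcases $\Gamma_{-\eta}^{in}$ may be empty or very short, depending on where $\Gamma_p$ meets the line $\Re(\tau)=-\eta$), and apply the residue theorem to peel off the contributions $\underline{\bf w}_k(j,j_0)\,{\rm e}^{\,n\,\mathbf{i}\,\underline{\theta}_k}$ coming from the simple poles at $\mathbf{i}\,\underline{\theta}_k$. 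The integral along $\Gamma_{-\eta}^{out}$ is treated exactly as in Lemmas~\ref{lem9} and \ref{lem11} and yields the harmless bound $C\,{\rm e}^{-n\eta - c|j-j_0|}$.

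The heart of the argument is to show that the full exponent $n\,\Re(\tau) + (j-j_0)\,\Re(\varpi(\tau))$ along $\Gamma_p$ is uniformly bounded above by $-n\,c_\star$ for some constant $c_\star>0$. I would reuse the algebraic manipulation of Lemma~\ref{lem11}: combining the defining equation of $\Gamma_p$ with the estimate $\Re(\tau)\le\tau_p - c_*\,\Im(\tau)^{2\mu}$ valid along $\Gamma_p$, the exponent reduces to
\[
-n\,c_*\,\Im(\tau)^{2\mu} \, + \, \frac{n}{\alpha}\bigl[\,-2\mu\,\zeta\,\tau_p \, + \, \gamma\,\tau_p^{2\mu}\,\bigr]\, .
\]
So everything comes down to estimating the bracket for the two specific choices of $\tau_p$. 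In the first subcase, the hypothesis $\rho(\zeta/\gamma) < -\eta/2$ translates into $\zeta < -\gamma\,(\eta/2)^{2\mu-1}$ (and in particular $\zeta<0$), so $\mu\,\eta\,\zeta < -2\mu\,\gamma\,(\eta/2)^{2\mu}$ and hence $-2\mu\zeta\tau_p+\gamma\tau_p^{2\mu} = \mu\eta\zeta + \gamma(\eta/2)^{2\mu} \le -(2\mu-1)\,\gamma\,(\eta/2)^{2\mu}$. In the second subcase, $\rho(\zeta/\gamma)>\varepsilon_0$ gives $\zeta>\gamma\,\varepsilon_0^{2\mu-1}$, which yields $-2\mu\,\zeta\,\varepsilon_0 + \gamma\,\varepsilon_0^{2\mu} \le -(2\mu-1)\,\gamma\,\varepsilon_0^{2\mu}$. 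Since $(j-j_0)/n\in[\alpha/2,r]$ throughout the short range, $\gamma$ is bounded between two positive constants, and in both subcases we obtain $\frac{n}{\alpha}[-2\mu\zeta\tau_p+\gamma\tau_p^{2\mu}] \le -n\,c_\star$ uniformly.

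Once this pointwise bound on the exponent is established, the integral along $\Gamma_p$ is controlled by
\[
C\,{\rm e}^{-n\,c_\star}\,\int_{\Gamma_p} {\rm e}^{-n\,c_*\,\Im(\tau)^{2\mu}}\,|\md\tau| \, \le \, C\,{\rm e}^{-n\,c_\star}\, ,
\]
where the Gaussian-type factor in $\Im(\tau)^{2\mu}$ (even without integrating it out) is at most $1$ and may be discarded. The remaining portion $\Gamma_{-\eta}^{in}$, if non-empty, is treated as in Lemma~\ref{lem11}: along that segment we use $\Re(\varpi(\tau))\le \eta/\alpha + (\beta^*/\alpha^{2\mu+1})\,\eta^{2\mu} - (\beta_*/\alpha^{2\mu+1})\,\Im(\tau)^{2\mu}$, and the fact that the imaginary part is bounded below by the matching value $\ell_*$ determined from $\Gamma_p$ together with the relation $(j-j_0)\,\Re(\varpi(\tau))+n\,\Re(\tau) = n\,\Re(\tau) - (j-j_0)/\alpha\cdot\Psi(\tau_p)-(\beta_*(j-j_0)/\alpha^{2\mu+1})(\Im(\tau)^{2\mu}-\ell_*^{2\mu})$ again delivers an ${\rm e}^{-n\,c_\star}$ bound.

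The main technical hurdle, I anticipate, is the geometric bookkeeping: verifying that in each subcase the prescribed contour stays in the domain of holomorphy of $\mathbf{G}_\tau(j,j_0)$ provided by Lemma~\ref{lem8} (in particular remaining within $B_\varepsilon(0)$ where the holomorphic extension is valid and avoiding the poles $\mathbf{i}\,\underline{\theta}_k$), and that the contour closes correctly so that the residue theorem applies with the expected poles. The bulk of the analytic content reduces to the elementary inequalities carried out above, but care is needed to choose $\eta$ and $\varepsilon_0$ sufficiently small so that all these geometric constraints are simultaneously satisfied, uniformly in $j,j_0,n$ within the ranges under consideration.
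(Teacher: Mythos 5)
Your proof is correct and follows essentially the same approach as the paper: deform $\Gamma$ into $\Gamma_p$ (pinned at $\tau_p=-\eta/2$ or $\tau_p=\varepsilon_0$) together with the out-of-ball segment, pick up the residues at the poles $\mathbf{i}\,\underline{\theta}_k$, and observe that the hypothesis on $\rho(\zeta/\gamma)$ forces the bracket $-2\mu\zeta\tau_p+\gamma\tau_p^{2\mu}$ to be bounded away from zero and negative, yielding the desired $e^{-n c_\star}$. The paper only writes out the subcase $\rho(\zeta/\gamma)>\varepsilon_0$ (in which $\Gamma_{-\eta}^{in}$ is empty by the very choice of $\varepsilon_0$) and declares the other symmetric; your version spells out both subcases and addresses the possibly non-empty $\Gamma_{-\eta}^{in}$ explicitly, which is a harmless and welcome extra degree of care.
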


\begin{proof}
We only present the proof in case $\rho(\zeta/\gamma) > \epsilon_0$ as the proof for $\rho \left( \frac{\zeta}{\gamma} \right) < -\frac{\eta}{2}$ follows 
similar lines. We deform the contour $\Gamma$ into $\Gamma_p \cup \Gamma_{-\eta}^{out}$ where $\Gamma_{-\eta}^{out}$ are the portions of 
$\Gamma_{-\eta}$ which lie outside $B_\varepsilon(0)$ with $|\Im(\tau)|\leq \pi$. We recall that we choose $\tau_p = \varepsilon_0$ here, so the 
curve $\Gamma_p$ intersects $\partial B_\varepsilon(0)$ precisely at $\Re(\tau)=-\eta$. In that case, we have that for all $\tau \in \Gamma_p$
$$
n \, \Re(\tau) \, + \, (j-j_0) \, \Re(\varpi(\tau)) \, \leq \, - \, n \, c_* \, \Im(\tau)^{2 \, \mu} \, + \, 
\dfrac{n}{\alpha} \, \left( \, - \, 2 \, \mu \, \zeta \, \varepsilon_0 \, + \, \gamma \, \varepsilon_0^{\, 2 \, \mu} \,  \right) \, .
$$
But as $\rho(\zeta/\gamma) > \varepsilon_0$ we get that $\zeta>0$ and $\zeta > \varepsilon_0^{2 \, \mu-1} \, \gamma$, the last term in the previous 
inequality is  estimated via
\bqs
- \, 2 \, \mu \, \zeta \, \varepsilon_0 \, + \, \gamma \, \varepsilon_0^{2 \, \mu} \,  < \, 
(1 \, - \, 2 \, \mu) \, \gamma \, \varepsilon_0^{2 \, \mu} \, .
\eqs
As a consequence, we can derive the following bound
\bqs
\left| \, \dfrac{1}{2 \, \pi \, \mathbf{i}} \, \int_{\Gamma_p} \, {\rm e}^{n \, \tau} \, \mathbf{G}_\tau(j,j_0) \, \md \tau \, \right| \, \leq \, C \, 
\dfrac{{\rm e}^{- \, \frac{n}{\alpha} \, (2 \, \mu-1) \, \gamma \, \varepsilon_0^{2 \, \mu}}}{n^{\frac{1}{2 \, \mu}}} \, .
\eqs
With our careful choice of $\varepsilon_0>0$, the remaining contribution along segments $\Gamma_{-\eta}^{out}$ can be estimated as usual as 
\bqs
\left| \, \dfrac{1}{2 \, \pi \, \mathbf{i}} \, \int_{\Gamma_{-\eta}^{out}} \, {\rm e}^{n \, \tau} \, \mathbf{G}_\tau(j,j_0) \, \md \tau \, \right| \, \leq \, 
C \, {\rm e}^{- \, n \, \eta \, - \, c \, (j-j_0)} \, ,
\eqs
as $|\tau| \geq \varepsilon$ for $\tau \in \Gamma_{-\eta}^{out}$. The conclusion of Lemma \ref{lem12} follows.
\end{proof}

We can now combine Lemma~\ref{lem9}, Lemma~\ref{lem10}, Lemma~\ref{lem11} and Lemma~\ref{lem12} to obtain the result of Proposition \ref{prop1}. 
Indeed, we observe that in Lemma~\ref{lem9}, Lemma~\ref{lem10} and Lemma~\ref{lem12}, the obtained exponential bounds can always be subsumed 
into Gaussian-like estimates. (Lemma~\ref{lem11} yields the worst estimate of all.) For instance, in Lemma~\ref{lem10}, the considered integers $j,j_0,n$ 
satisfy $1 \leq j-j_0 \leq n \, \alpha/2$, which implies 
$$
- \, n \, \le \, \dfrac{j \, - \, j_0 \, - \, n \, \alpha}{\alpha} \,  \le - \, \dfrac{n}{2} \, ,
$$
and therefore:
$$
- \, n \, \le \, - \, \omega \, \left( \dfrac{|j -j_0 -n \, \alpha|}{n^{\frac{1}{2 \, \mu}}} \right)^{\frac{2 \, \mu}{2 \, \mu-1}} \, ,
$$
for some sufficiently small constant $\omega > 0$. It remains to explain why Proposition \ref{prop1} implies Theorem \ref{thm1}.

\subsection{Proof of the main result}

We let $w \in \mathcal{H}$, and first remark that for any integer $n$, the sequence $\T^n \, w \in \mathcal{H}$ is given by:
\bqs
\forall \, j \geq 1\, , \quad (\T^n \, w)_j \, = \, \sum_{j_0\geq 1}\, \G^{\, n}(j,j_0) \, w_{j_0} \, .
\eqs
From Proposition~\ref{prop1}, we can decompose $\G^n(j,j_0)$ into two pieces
\bqs
\mathcal{G}^{\, n}(j,j_0) \, = \, \sum_{k=1}^K \, \underline{{\bf w}}_k(j,j_0) \, {\rm e}^{\, n \, \mathbf{i} \, \underline{\theta}_k} \, + \, 
\tilde{\mathcal{G}}^{\, n}(j,j_0) \, ,
\eqs
where the remainder term $\tilde{\mathcal{G}}^{\, n}(j,j_0)$ satisfies the generalized Gaussian estimate of Proposition \ref{prop1}. From the exponential 
bound \eqref{bornesprofils} and Proposition \ref{prop1}, we have:
\bqs
\sum_{j_0 \geq 1} \, \Big| \, \G^{\, n}(j,j_0) \, w_{j_0} \, \Big| \, \leq \, C \, {\rm e}^{- \, c \, j} \, \sum_{j_0 \geq 1} \, {\rm e}^{- \, c \, j_0} \, |w_{j_0}| \, + \, 
\dfrac{C}{n^{\frac{1}{2 \, \mu}}} \, \sum_{j_0 \geq 1} \, 
\exp \left( -\, \omega \, \left( \dfrac{|j-j_0- \alpha \, n|}{n^{\frac{1}{2 \, \mu}}} \right)^{\frac{2 \, \mu}{2 \, \mu-1}}\right) \, |w_{j_0}| \, .
\eqs
Noting that the sequence $\left({\rm e}^{- \, c \, j} \right)_{j\geq1}$ is in $\ell^2$, we get that
\bqs
\sum_{j \geq 1} \, \left( {\rm e}^{- \, c \, j} \, \sum_{j_0\geq 1} \, {\rm e}^{- \, c \, j_0} \, |w_{j_0}| \right)^2 \, \le C \, \| \, w \, \|_{\mathcal{H}}^2 \, .
\eqs
Now for the second term, we observe that the sequence defined as
\bqs
\forall \, j \in \mathbb{Z} \, ,\quad \mathbf{g}_j \, := \, \dfrac{1}{n^{\frac{1}{2 \, \mu}}} \, 
\exp \left( - \, \omega \, \left( \dfrac{|j- \alpha \, n|}{n^{\frac{1}{2 \, \mu}}} \right)^{\frac{2 \, \mu}{2 \, \mu-1}} \right) \, ,
\eqs
is bounded (with respect to $n \in \N^*$) in $\ell^1(\Z)$. Using the Young's convolution inequality $\ell^1(\Z) \star \ell^2(\Z) \rightarrow \ell^2(\Z)$, 
we thus obtain the uniform in time bound:
\bqs
\sum_{j\geq 1} \, \left( \, \dfrac{1}{n^{\frac{1}{2 \, \mu}}} \, \sum_{j_0 \geq 1} \, 
\exp \left( -\, \omega \, \left( \dfrac{|j-j_0- \alpha \, n|}{n^{\frac{1}{2 \, \mu}}} \right)^{\frac{2 \, \mu}{2 \, \mu-1}}\right) \, |w_{j_0}| \, \right)^2 
\, \le \, C \, \| \, w \, \|_{\mathcal{H}}^2 \, .
\eqs
This completes the proof that our operator $\T$ is power bounded on $\mathcal{H}$.

\section{An illustrative example}
\label{sect:example}

\begin{figure}[t!]
  \centering
  \includegraphics[width=.45\textwidth]{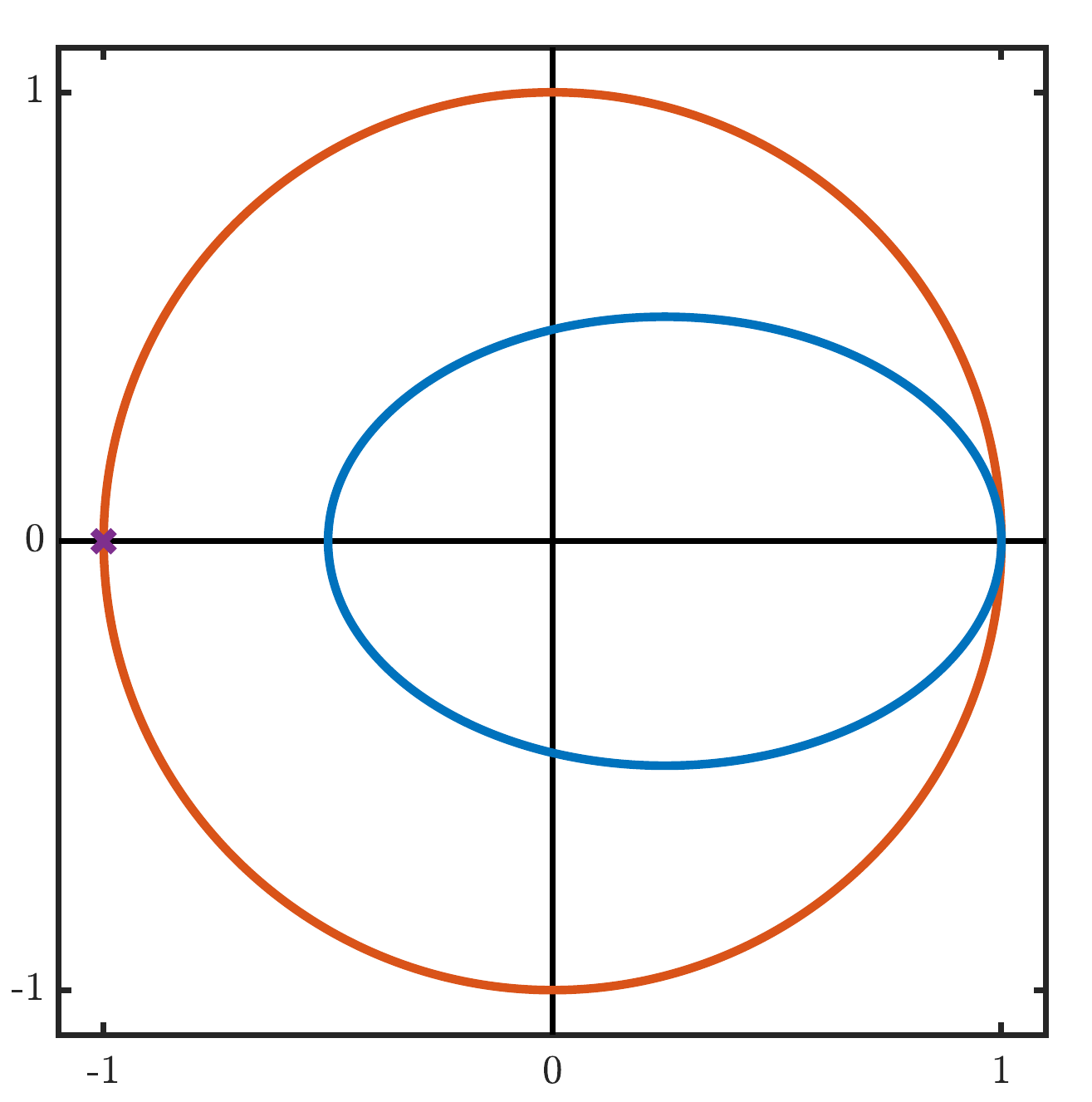}
  \caption{Spectrum of the modified Lax-Friedrichs numerical scheme \eqref{LFnum}-\eqref{LFnumBC} with $\lambda a=1/2$, $D=3/4$ and 
  $b=-1-\frac{2\sqrt{5}}{5}$. The blue curve is the essential spectrum of $\T$ and we have indicated by a cross the presence of eigenvalue at 
  $\underline{z}=-1$. Our Assumptions~\ref{hyp:1}-\ref{hyp:2} are satisfied in that case.}
  \label{fig:LFspectrm}
\end{figure}

We illustrate our main result by considering the modified Lax-Friedrichs numerical scheme which reads
\begin{equation}
\label{LFnum}
u_j^{n+1} \, = \, u_j^n \, + \, \dfrac{D}{2} \, \left(u_{j-1}^n \, - \, 2 \, u_j^n \, + \, u_{j+1}^n \right) \, - \, 
\dfrac{\lambda \, a}{2} \, \left( u_{j+1}^n \, - \, u_{j-1}^n \right) \, ,\quad j \ge 1 \, ,
\end{equation}
where $D>0$ and $\lambda a>0$, along with some specific boundary condition at $j=0$ which we shall specify later. Using our formalism from 
\eqref{schema-int}, we have $p=r=1$ and
\bqs
a_{-1} \, = \, \dfrac{D \, + \, \lambda \, a}{2} \, ,\quad a_0 \, = \, 1 \, - \, D \, ,\quad \text{ and } \quad a_1 \, = \, \dfrac{D \, - \, \lambda \, a}{2} \, .
\eqs
We readily note that our consistency conditions \eqref{hyp:consistance} are satisfied. Next, if we denote 
\bqs
F(\theta) \, := \, \sum_{\ell=-1}^1 \, a_\ell \, \mathrm{e}^{\, \mathbf{i} \, \theta \, \ell} \, ,\quad \theta \in [-\pi,\pi] \, ,
\eqs
then we have
\bqs
F(\theta) \, = \, 1 \, - \, D \, + \, D \, \cos(\theta) \, - \, \mathbf{i} \, \lambda \, a \, \sin(\theta) \, .
\eqs
As a consequence, provided that $0 < \lambda \, a < 1$ and $(\lambda \, a)^2 < D < 1$, we get 
\bqs
\forall \, \theta \in [-\pi,\pi] \setminus \{ 0 \} \, ,\quad \left| \, F(\theta) \, \right| \, < \, 1 \, ,
\eqs
such that the dissipativity condition \eqref{hyp:stabilite1} is also verified. Next, we compute that
\bqs
F(\theta) \, = \, 1 \, - \, \mathbf{i} \, \lambda \, a \, \theta \, - \, \dfrac{D}{2} \, \theta^2 \, + \, O(\theta^3) \, , 
\eqs
as $\theta$ tends to $0$. We thus deduce that \eqref{hyp:stabilite2} is satisfied with
\bqs
\mu \, := \, 1 \, ,\quad \text{ and } \quad \beta \, := \, \dfrac{D \, - \, (\lambda \, a)^2}{2} \, > \, 0 \, .
\eqs
Assumption \ref{hyp:1} is thus satisfied provided that we have $0 < \lambda \, a < 1$ and $(\lambda \, a)^2 < D < 1$. We also assume from now on 
$D \neq \lambda \, a$ so that the coefficient $a_1$ is nonzero.

We now prescribe a boundary condition for \eqref{LFnum} which will ensure that our Assumption~\ref{hyp:2} on the Lopatinskii determinant is satisfied. 
That  is, we want to find $\underline{z}\in\mathbb{S}^1\setminus\{1\}$ which is an eigenvalue for $\T$. This means that at this point $\underline{z}$ the 
boundary condition must be adjusted so as to have $\text{\rm Ker } \mathcal{B} \, \cap \, \E^s (\underline{z}) \neq \{ 0 \}$. We use a boundary 
condition of the form given in \eqref{schema-bc} with $p_b=p=1$:
\bqs
u_0^{n} \, = \, b \, u_1^{n} \, ,\quad n \geq 1 \, ,
\eqs
where $b \in \R$ is a constant. In order to ensure that $\text{\rm Ker } \mathcal{B} \, \cap \, \E^s (\underline{z}) \neq \emptyset$ is satisfied, we impose that 
\bqs
1 \, = \, b \, \kappa_s(\underline{z}) \, ,
\eqs
where $\kappa_s(\underline{z})$ refers to the (unique) stable eigenvalue of $\mathbb{M}(\underline{z})$. Finally, we select $\underline{z}=-1$. This is 
the only value on the unit circle, apart from $z=1$, which ensures that $\kappa_s(\underline{z})$ is real. Note that $\kappa_s(-1)$ has the exact expression
\bqs
\kappa_s(-1) \, = \, \dfrac{- \, 1 \, - \, a_0}{2 \, a_1} \, + \, \sqrt{ \left( \dfrac{- \, 1 \, - \, a_0}{2 \, a_1} \right)^2 \, - \, \dfrac{a_{-1}}{a_1}} \, \neq \, 0 \, .
\eqs
Our actual boundary condition is thus
\bqq
u_0^{n}=\frac{1}{\kappa_s(-1)}\, u_1^{n}, \quad n \geq 1.
\label{LFnumBC}
\eqq
With that specific choice, we easily see that $\text{\rm Ker } \mathcal{B} \, \cap \, \E^s (z)$ is nontrivial for $z \in \Ubar \setminus \{ 1 \}$ if and only if 
$z=-1$, for the Lopatinskii determinant equals $1-\kappa_s(z)/\kappa_s(-1)$, and the equation $\kappa_s(z)=\kappa_s(-1)$ has a unique solution 
given precisely by $z=\underline{z}=-1$. Moreover, $-1$ is a simple root of the Lopatinskii determinant. Hence Assumption \ref{hyp:2} is satisfied 
with the choice \eqref{LFnumBC}.

\begin{figure}[t!]
  \centering
  \includegraphics[width=.45\textwidth]{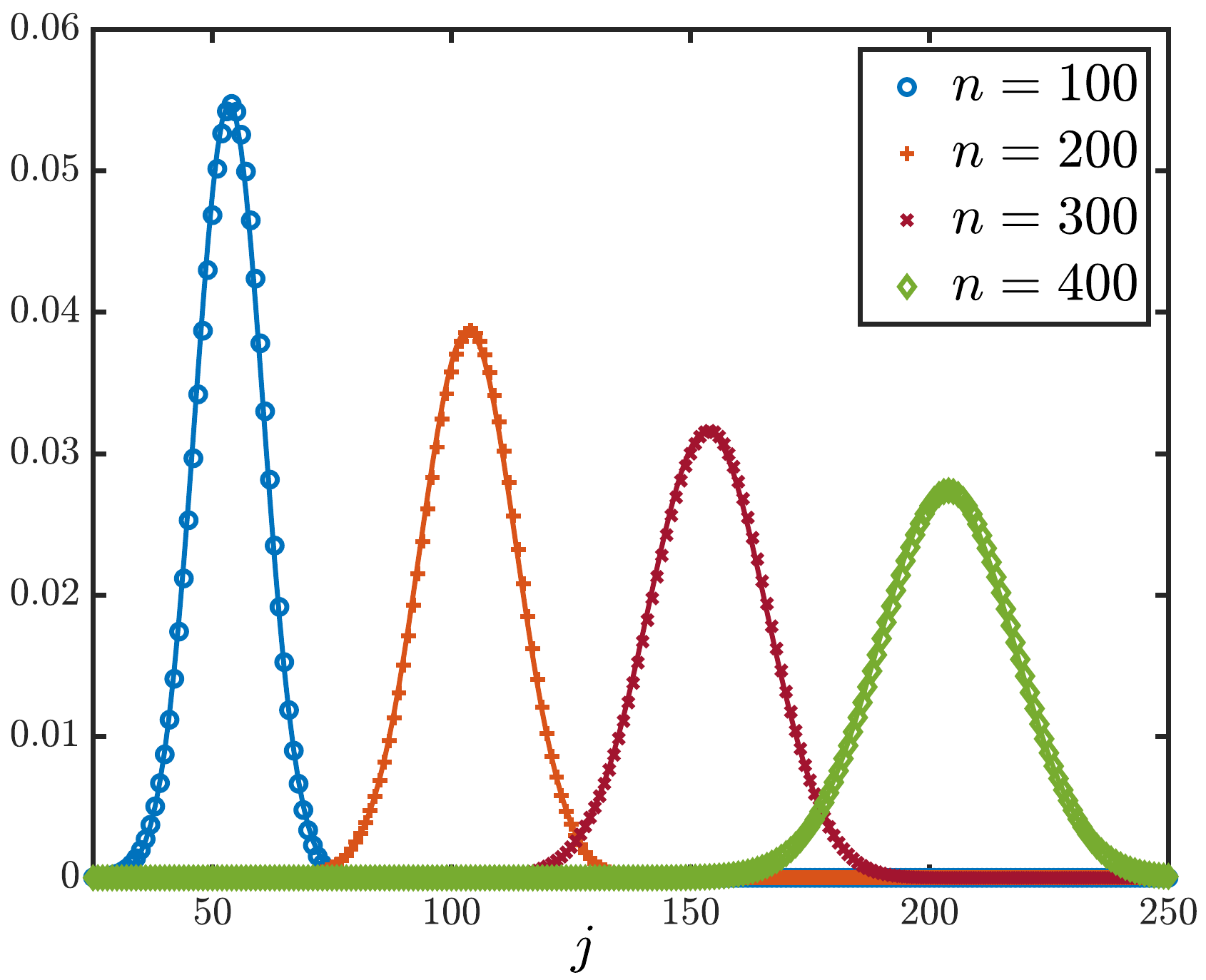}\hspace{0.75cm}
  \includegraphics[width=.465\textwidth]{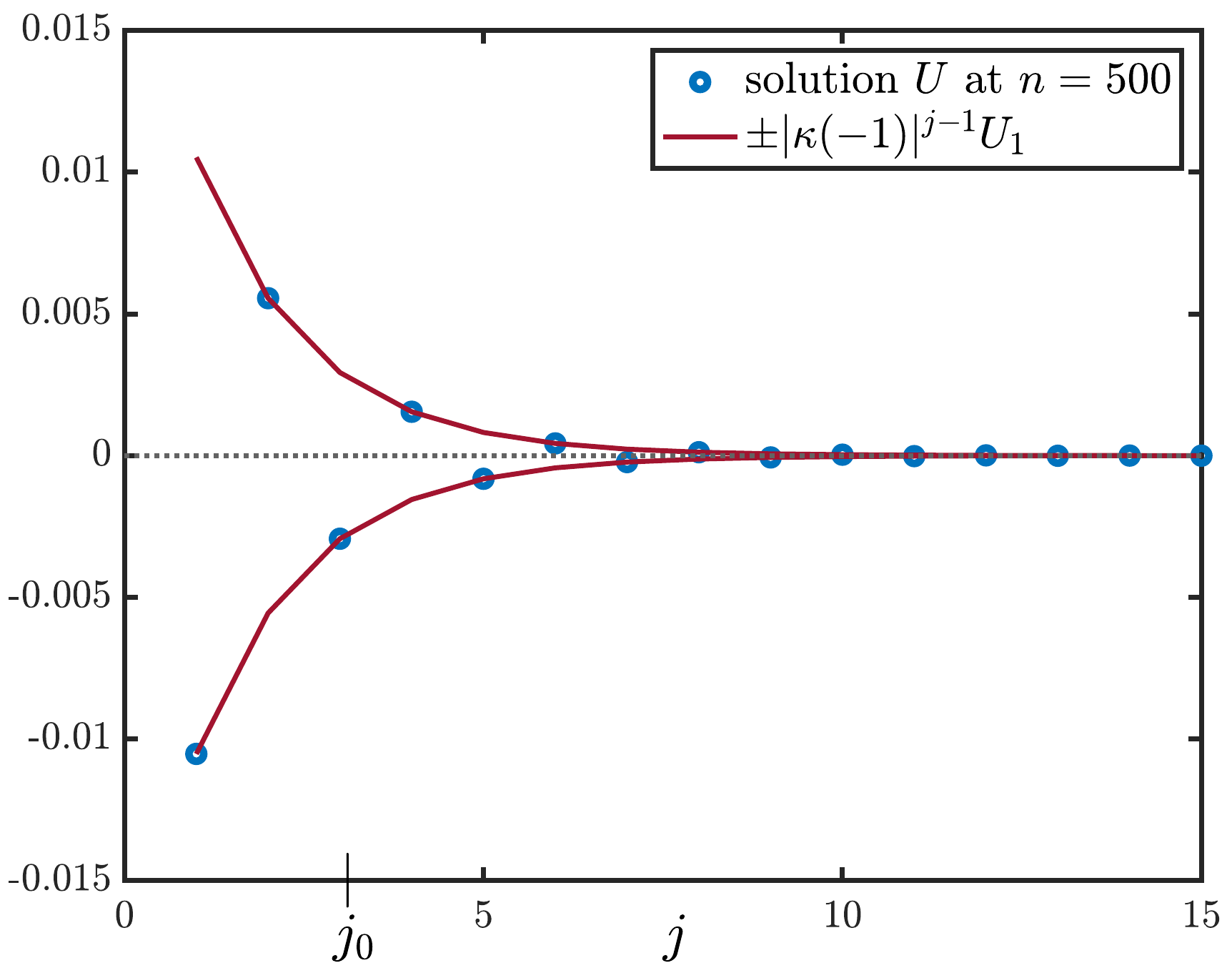}
  \caption{We illustrate our main Theorem~\ref{thm1} and Proposition~\ref{prop1} for the modified Lax-Friedrichs numerical scheme \eqref{LFnum}-\eqref{LFnumBC} 
  with $\lambda \, a=1/2$,  $D=3/4$ and $b=-1-\frac{2\sqrt{5}}{5}$. We started with an initial condition given by the Dirac mass at $j_0=3$. In the left figure, we 
  represent the Green's function at different time iterations and compare with a fixed Gaussian profile centered at $j-j_0=\lambda \, a \, n$ away from the boundary 
  $j=1$. In the right figure, we highlight the behavior of the Green's function near the boundary. We represent the solution (blue circules) after $500$ time iterations 
  and show that it corresponds to a so-called surface wave given by the eigenvalue at $\underline{z}=-1$ of $\T$.}
  \label{fig:LFnum}
\end{figure}

Note that the modified Lax-Friedrichs numerical scheme \eqref{LFnum}-\eqref{LFnumBC} is (formally) consistant with discretization of the transport 
equation 
\bqs
\left\{
\begin{split}
& \partial_t u \, + \, a \, \partial_x u \, = \, 0 \, , \quad t>0, \quad x>0 \, ,\\
& u(t,0) = 0 \, , \quad t>0 \, , \\
& u(0,x) = u_0(x) , \quad x>0 \, ,
\end{split}
\right.
\eqs
for some given (smooth) initial condition $u_0$.

We present in Figure~\ref{fig:LFspectrm} the spectrum of $\T$ associated to the modified Lax-Friedrichs numerical scheme \eqref{LFnum}-\eqref{LFnumBC} 
with $\lambda \, a=1/2$,  $D=3/4$ and $b=-1-\frac{2\sqrt{5}}{5}$. In Figure~\ref{fig:LFnum}, we illustrate the decomposition given in Proposition~\ref{prop1} 
where the temporal Green's function decomposes into two parts: a boundary layer part given by $\underline{\mathbf{w}}_1(j,j_0) \, (-1)^n$ which is exponentially 
localized in both $j$ and $j_0$ and a generalized Gaussian part which is advected away along $j-j_0=\lambda \, a \, n$. We start with an initial condition 
given by the Dirac mass at $j_0=3$. We remark that the Green's function at different time iterations compares well with a fixed Gaussian profile centered 
at $j-j_0 = \lambda \, a \, n$ away from the boundary $j=1$. We also visualize the behavior of the solution near the boundary for $1\leq j \leq 15$ and 
shows that up to a constant, depending on $j_0$, the envelope of the Green's function is given by $\pm \, |\kappa_s(-1)|^{j-1}$.

\appendix

\section{Proofs of intermediate results}
\label{sec:appendix}

This Appendix is devoted to the proof of several intermediate results, which are recalled here for the reader's convenience.

\subsection{The Bernstein type inequality}

\begin{lemma}
Under Assumption \ref{hyp:1}, there holds $\lambda \, a \, < \, r$.
\end{lemma}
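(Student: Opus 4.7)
The plan is to reduce the inequality $\lambda a < r$ to the strict positivity $Q'(1) > 0$ for the auxiliary polynomial
\[
Q(z) \, := \, z^{\, r} \, \sum_{\ell=-r}^p \, a_\ell \, z^{\, \ell} \, = \, \sum_{k=0}^{p+r} \, a_{k-r} \, z^{\, k} \, ,
\]
which has real coefficients and degree exactly $p + r$ because $a_p \neq 0$. For $z = \mathrm{e}^{\mathbf{i} \theta}$ on the unit circle, $Q(z) = \mathrm{e}^{\mathbf{i} r \theta} \sum_\ell a_\ell \mathrm{e}^{\mathbf{i} \ell \theta}$, so $|Q(z)|$ coincides with the modulus of the symbol in \eqref{hyp:stabilite1}. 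The consistency conditions \eqref{hyp:consistance} give $Q(1) = 1$, dissipativity \eqref{hyp:stabilite1} gives $|Q(z)| < 1$ on $\cercle \setminus \{ 1 \}$, and the maximum modulus principle applied to the non-constant polynomial $Q$ extends these into $|Q(z)| \leq 1$ on $\Dbar$ with strict inequality on $\D$. A direct computation from \eqref{hyp:consistance} yields
\[
Q'(1) \, = \, \sum_{k=0}^{p+r} \, k \, a_{k-r} \, = \, \sum_{\ell=-r}^p \, (\ell + r) \, a_\ell \, = \, r \, - \, \lambda \, a \, ,
\]
so the claim reduces to $Q'(1) > 0$.

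For this strict positivity I would apply Hopf's boundary point lemma to the smooth subharmonic function $u := |Q|^2$ (subharmonic as the squared modulus of a holomorphic function). By the preceding paragraph, $u$ attains on $\Dbar$ its maximum value $1$ uniquely at the boundary point $z = 1$, and $\Dbar$ satisfies the interior ball condition there, so Hopf's lemma forces the outward normal derivative of $u$ at $1$ to be strictly positive. Since this outward normal coincides with the positive real direction and since $Q(x) \in \R$ for $x \in \R$ (real coefficients), one computes
\[
\left. \frac{\partial u}{\partial \nu} \right|_{z = 1} \, = \, \left. \frac{\md}{\md x} \, Q(x)^{\, 2} \right|_{x = 1} \, = \, 2 \, Q(1) \, Q'(1) \, = \, 2 \, Q'(1) \, ,
\]
whence $Q'(1) > 0$ and the lemma follows.

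The main point requiring care is verifying that $z = 1$ is a \emph{strict} maximum of $u$ over $\Dbar$, so that Hopf's lemma applies: this relies on dissipativity to rule out other boundary maxima on $\cercle$, together with the maximum modulus principle and $a_p \neq 0$ (which ensures $Q$ is non-constant of positive degree) to rule out interior maxima. One could alternatively bypass Hopf's lemma by a self-contained Taylor-expansion argument: let $m \geq 1$ be the smallest integer with $Q^{(m)}(1) \neq 0$, so that $Q(1 + w) = 1 + \frac{Q^{(m)}(1)}{m!} w^{\, m} + o(|w|^{\, m})$; the constraint $|Q(1 + w)|^2 \leq 1$ for small $w$ with $1 + w \in \Dbar$ forces $\arg w$ essentially into $[\pi/2, 3\pi/2]$ and yields $Q^{(m)}(1) \cos(m \arg w) \leq 0$ there. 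For $m \geq 2$ the range $[m \pi/2, 3 m \pi/2]$ has length $\geq 2\pi$ so $\cos$ assumes both signs, forcing the real coefficient $Q^{(m)}(1)$ to vanish and contradicting the minimality of $m$. Hence $m = 1$ and $Q'(1) \neq 0$; combined with $Q'(1) \geq 0$ (from $\arg w = \pi$), this gives $Q'(1) > 0$.
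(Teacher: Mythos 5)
Your argument is correct, and it takes a genuinely different route from the paper's. The paper also passes to the polynomial $P(z)=z^r\sum_\ell a_\ell z^\ell$ and the maximum modulus principle, but it then proceeds in two steps: first, real coefficients and the monotonicity of $P$ on $[0,1]$ give only the non-strict inequality $P'(1)=r-\lambda a\ge 0$; second, equality is excluded by a contradiction that crucially uses the precise expansion \eqref{hyp:stabilite2}. Concretely, if $\lambda a=r$ then $P({\rm e}^{\mathbf{i}\theta})=\exp(-\beta\theta^{2\mu}+O(\theta^{2\mu+1}))$ near $\theta=0$, and evaluating at the complex angle $\theta=\varepsilon\,{\rm e}^{\mathbf{i}\pi/(2\mu)}$ turns the damping $-\beta\theta^{2\mu}$ into growth while keeping ${\rm e}^{\mathbf{i}\theta}\in\D$, contradicting $|P|\le 1$ on $\Dbar$.

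Your proof instead obtains the strict inequality $Q'(1)>0$ in a single stroke via Hopf's boundary point lemma applied to the subharmonic function $u=|Q|^2$ (or, equivalently, via the Julia--Wolff--Carath\'eodory angular derivative at the boundary fixed point $z=1$). This uses only \eqref{hyp:consistance}, \eqref{hyp:stabilite1}, and $a_{-r}a_p\neq 0$; it does \emph{not} use the finer structural hypothesis \eqref{hyp:stabilite2} at all, so it is strictly more economical in its assumptions. The self-contained Taylor alternative you sketch at the end (letting $m\ge 1$ be the order of contact and observing that $m\ge 2$ would force $Q^{(m)}(1)\cos(m\psi)\le 0$ on an angular interval of length $\ge 2\pi$) is also correct, achieves the same conclusion without quoting Hopf's lemma, and again never invokes \eqref{hyp:stabilite2}. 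What the paper's version buys in exchange is that it stays entirely within explicit power-series manipulations already set up for the later Green's function analysis; what yours buys is a cleaner conceptual mechanism and weaker hypotheses.
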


\begin{proof}
We introduce the polynomial function:
$$
\forall \, z \in \C \, ,\quad P(z) \, := \, \sum_{\ell=-r}^p \, a_\ell \, z^{\, \ell \, + \, r} \, .
$$
Assumption \ref{hyp:1} implies that $P$ is a nonconstant holomorphic function on $\C$ and that the modulus of $P$ is not larger than $1$ 
on $\cercle$. By the maximum principle for holomorphic functions, $P$ maps $\D$ onto $\D$. In particular, since $P$ has real coefficients, 
$P$ achieves its maximum on $[0,1]$ at $1$, and we thus have $P'(1) \ge 0$. From \eqref{hyp:consistance}, we thus have $P'(1) = r - 
\lambda \, a \ge 0$. It remains to explain why $\lambda \, a$ can not equal $r$.

We assume from now on $\lambda \, a =r$ and explain why this leads to a contradiction. Multiplying \eqref{hyp:stabilite2} by $\exp (\mathbf{i} 
\, r \, \theta)$, we obtain:
$$
P \big( \, {\rm e}^{\, \mathbf{i} \, \theta} \, \big) \, = \, \exp \big( \, - \, \beta \, \theta^{\, 2 \, \mu} \, + \, O(\theta^{\, 2 \, \mu \, + \, 1}) \big) \, ,
$$
for $\theta$ close to $0$. By the unique continuation theorem for holomorphic functions, the latter expansion holds for either real or complex 
values of $\theta$. We thus choose $\theta = \varepsilon \, \exp (\mathbf{i} \, \pi /(2\, \mu))$ for any sufficiently small $\varepsilon > 0$. We have:
$$
P \big( \, {\rm e}^{\, \mathbf{i} \, \varepsilon \, \exp (\mathbf{i} \, \pi /(2\, \mu))} \, \big) \, = \, 
\exp \big( \, \beta \, \varepsilon^{\, 2 \, \mu} \, + \, O(\varepsilon^{\, 2 \, \mu \, + \, 1}) \big) \, ,
$$
which is a contradiction since $P$ maps $\D$ onto $\D$ and $\beta > 0$. We have thus proved $\lambda \, a < r$.
\end{proof}

\subsection{Proof of Lemma~\ref{lem:1}}

\begin{lemma}
Under Assumption \ref{hyp:1}, there exists $c_0>0$ such that, if we define the set:
$$
\mathcal{C} \, := \, \Big\{ \rho \, {\rm e}^{\, \mathbf{i} \, \varphi} \in \C \, / \, \varphi \in [ - \, \pi \, , \, \pi ] \quad \text{\rm and} \quad 
0 \, \le \, \rho \, \le \, 1 \, - \, c_0 \, \varphi^{\, 2 \, \mu} \Big\} \, ,
$$
then $\mathcal{C}$ is a compact star-shaped subset of $\Dbar$, and the curve:
$$
\left\{ \sum_{\ell=-r}^p \, a_\ell \, {\rm e}^{\, \mathbf{i} \, \ell \, \theta} \, / \, \theta \in [ - \, \pi \, , \, \pi ] \right\}
$$
is contained in $\mathcal{C}$.
\end{lemma}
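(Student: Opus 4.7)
\medskip

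\noindent\textbf{Proof proposal.}

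The plan is to first dispatch the elementary geometric properties of $\mathcal{C}$, and then split the inclusion of the spectral curve into a local argument near $\theta=0$ (where the expansion \eqref{hyp:stabilite2} is decisive) and a compactness argument away from $0$ (where \eqref{hyp:stabilite1} gives a uniform gap).

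First, for any $c_0>0$ small enough that $c_0\,\pi^{\,2\mu}\le 1$, the set $\mathcal{C}$ is obviously closed and bounded, hence compact, contained in $\Dbar$ since $\rho\le 1-c_0\,\varphi^{2\mu}\le 1$, and star-shaped with respect to $0$ (if $\rho\,{\rm e}^{\mathbf{i}\varphi}\in\mathcal{C}$ and $\sigma\in[0,1]$, then $\sigma\rho$ still satisfies $0\le\sigma\rho\le 1-c_0\,\varphi^{2\mu}$). These properties are automatic, so the real content of the lemma lies in the inclusion of the spectral curve.

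Set $F(\theta):=\sum_{\ell=-r}^{p} a_\ell\,{\rm e}^{\mathbf{i}\ell\theta}$ and write $F(\theta)=\rho(\theta)\,{\rm e}^{\mathbf{i}\varphi(\theta)}$ for $\theta\in[-\pi,\pi]$. Near $\theta=0$, I would take the modulus of \eqref{hyp:stabilite2} to get
\[
\rho(\theta)\, = \, \exp\!\Big(-\beta\,\theta^{2\mu}+O(\theta^{2\mu+1})\Big)\, = \, 1-\beta\,\theta^{2\mu}+O(\theta^{2\mu+1}),
\]
and take the imaginary part of the logarithm to get $\varphi(\theta)=-\lambda a\,\theta+O(\theta^{2\mu+1})$. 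Since $\lambda a>0$, the map $\theta\mapsto\varphi(\theta)$ is a local diffeomorphism near $0$ with $\theta=-\varphi/(\lambda a)+O(\varphi^{2\mu+1})$, hence
\[
\rho(\theta)\, = \, 1-\frac{\beta}{(\lambda a)^{2\mu}}\,\varphi(\theta)^{2\mu}+O\!\big(\varphi(\theta)^{2\mu+1}\big).
\]
Thus, on some small neighbourhood $|\theta|\le\theta_\ast$, for any choice $c_0<\beta/(\lambda a)^{2\mu}$ the bound $\rho(\theta)\le 1-c_0\,\varphi(\theta)^{2\mu}$ holds.

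For the complementary range $\theta_\ast\le|\theta|\le\pi$, the dissipativity \eqref{hyp:stabilite1} together with continuity and compactness gives $\rho(\theta)\le 1-\delta$ for some $\delta=\delta(\theta_\ast)>0$. Since $\varphi(\theta)\in[-\pi,\pi]$, it suffices to diminish $c_0$ so that $c_0\,\pi^{2\mu}\le\delta$, which yields $\rho(\theta)\le 1-\delta\le 1-c_0\,\varphi(\theta)^{2\mu}$ on that range. Choosing $c_0$ to satisfy both constraints simultaneously completes the inclusion of the spectral curve in $\mathcal{C}$.

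The main obstacle, mild but worth flagging, is the passage from the expansion of $F(\theta)$ in the variable $\theta$ to the inequality expressed in the variable $\varphi$: one must invoke the consistency condition \eqref{hyp:consistance} to know that $\varphi'(0)=-\lambda a\neq 0$, so that $\theta$ and $\varphi(\theta)$ are comparable near $0$ and the $\theta^{2\mu}$ bound transfers cleanly to a $\varphi^{2\mu}$ bound. Everything else is either elementary (star-shape, compactness) or a routine compactness/continuity argument on the arc away from the origin.
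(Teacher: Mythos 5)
Your proof is correct and takes essentially the same route as the paper: treat the elementary geometric properties of $\mathcal{C}$ directly, use the expansion \eqref{hyp:stabilite2} to convert the $\theta^{2\mu}$ decay of $\rho$ into a $\varphi^{2\mu}$ bound near $\theta=0$ (relying on $\varphi'(0)=-\lambda a\neq 0$), and handle $\theta_0\le|\theta|\le\pi$ by the uniform gap from \eqref{hyp:stabilite1} together with shrinking $c_0$. The only cosmetic difference is that the paper uses the two-sided comparison $\tfrac{\lambda a}{2}|\theta|\le|\varphi(\theta)|\le\tfrac{3\lambda a}{2}|\theta|$ rather than inverting $\varphi$ via its Taylor expansion, which amounts to the same thing.
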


\begin{proof}
We first choose the constant $c_0$ such that for any sufficiently small $\theta$, the point:
$$
\sum_{\ell=-r}^p \, a_\ell \, {\rm e}^{\, \mathbf{i} \, \ell \, \theta} 
$$
lies in $\mathcal{C}$. To do so, we use \eqref{hyp:stabilite2} from Assumption \ref{hyp:1} and thus write for any sufficiently small $\theta$:
$$
\sum_{\ell=-r}^p \, a_\ell \, {\rm e}^{\, \mathbf{i} \, \ell \, \theta} \, = \, \rho(\theta) \, {\rm e}^{\, \mathbf{i} \, \varphi(\theta)} \, ,
$$
with:
$$
0 \,  \le \, \rho(\theta) \, \le \, 1 \, - \, \dfrac{\beta}{2} \, \theta^{\, 2 \, \mu} \, ,\quad \text{\rm and } \quad 
\dfrac{\lambda \, a}{2} \, |\theta| \, \le \, | \, \varphi(\theta) \, | \, \le \, \dfrac{3 \, \lambda \, a}{2} \, |\theta| \, .
$$
Hence there exists $c_0>0$ and $\theta_0>0$ small enough such that, for $|\theta| \le \theta_0$, there holds:
$$
0 \,  \le \, \rho(\theta) \, \le \, 1 \, - \, c_0 \, \varphi(\theta)^{\, 2 \, \mu} \, .
$$

Let us now examine the case $\theta_0 \le |\theta| \le \pi$. By continuity and compactness, \eqref{hyp:stabilite1} yields:
$$
\sup_{\theta_0 \le |\theta| \le \pi} \, \left| \, \sum_{\ell=-r}^p \, a_\ell \, {\rm e}^{\, \mathbf{i} \, \ell \, \theta} \, \right| \, = \, 1 \, - \, \delta_0 \, ,
$$
for some $\delta_0>0$. Up to choosing $c_0$ smaller, we can always assume $c_0 \, \pi^{\, 2 \, \mu} \le \delta_0$, so for any angle $\theta$ with 
$\theta_0 \le |\theta| \le \pi$, the point:
$$
\sum_{\ell=-r}^p \, a_\ell \, {\rm e}^{\, \mathbf{i} \, \ell \, \theta} 
$$
lies in $\mathcal{C}$. The proof is thus complete.
\end{proof}

\subsection{Proof of Lemma~\ref{lem:2} on the spectral splitting}

\begin{lemma}
Under Assumption \ref{hyp:1}, let $z \in \C$ and let the matrix $\M(z)$ be defined as in \eqref{defM}. Let the set $\mathcal{C}$ be defined by Lemma 
\ref{lem:1}. Then for $z \not \in \mathcal{C}$, $\M(z)$ has:
\begin{itemize}
 \item no eigenvalue on $\cercle$,
 \item $r$ eigenvalues in $\D \setminus \{ 0 \}$,
 \item $p$ eigenvalues in $\U$ (eigenvalues are counted with multiplicity).
\end{itemize}
Furthermore, $\M(1)$ has $1$ as a simple eigenvalue, it has $r-1$ eigenvalues in $\D$ and $p$ eigenvalues in $\U$.
\end{lemma}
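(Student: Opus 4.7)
The plan is to reduce the eigenvalue problem for $\M(z)$ to the scalar equation $F(\kappa) = z$, where $F(\kappa) := \sum_{\ell=-r}^p a_\ell \kappa^\ell$, and then use Lemma~\ref{lem:1} together with a continuity/Rouch\'e argument. First, I would exploit the companion structure of $\M(z)$. Looking for an eigenvector of the form $V = (\kappa^{p+r-1},\dots,\kappa,1)^\top$, a direct computation shows that $\M(z) V = \kappa V$ is equivalent to $\sum_{\ell=-r}^p a_\ell \kappa^{r+\ell} = z \kappa^r$, i.e.\ $F(\kappa) = z$ for $\kappa \ne 0$. The characteristic polynomial (up to the factor $a_p$) is thus
$$
\tilde P_z(\kappa) \, = \, \sum_{\ell=-r}^p a_\ell \kappa^{r+\ell} \, - \, z \, \kappa^r \, ,
$$
which is of degree $p+r$ in $\kappa$ with leading coefficient $a_p \ne 0$ and constant term $a_{-r} \ne 0$ (Assumption~\ref{hyp:1}). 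In particular, $0$ is never an eigenvalue, so all $p+r$ eigenvalues lie in $\C^*$.

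Next, I take $z \notin \mathcal{C}$. By Lemma~\ref{lem:1}, the curve $F(\cercle)$ is contained in $\mathcal{C}$, so the equation $F(\kappa) = z$ has no solution $\kappa \in \cercle$, which gives the first bullet. To count eigenvalues in $\D$ versus $\U$, I would argue by continuity: since $\mathcal{C}$ is a compact subset of $\Dbar$, the complement $\C \setminus \mathcal{C}$ is open and connected, so any $z \notin \mathcal{C}$ can be joined to a point of large modulus by a path in $\C \setminus \mathcal{C}$. Along such a path no root of $\tilde P_z$ crosses $\cercle$ (by the previous step), so the number of roots in $\D$ and in $\U$ is constant along the path. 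It remains to do the count at a single convenient $z$. Applying Rouch\'e's theorem on $\cercle$ to $\tilde P_z = -z\,\kappa^r + \sum_\ell a_\ell \kappa^{r+\ell}$ for $|z| > \sum_\ell |a_\ell|$, the term $-z\,\kappa^r$ dominates, so $\tilde P_z$ has exactly $r$ roots in $\D$ (all nonzero) and therefore $p$ roots in $\U$. This yields the two remaining bullets.

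For the statement about $\M(1)$, I first use the strict dissipativity \eqref{hyp:stabilite1}: $|F({\rm e}^{i\theta})| < 1$ for $\theta \ne 0$ and $F(1) = 1$, so $\kappa = 1$ is the unique eigenvalue of $\M(1)$ on $\cercle$. Simplicity then follows from $F'(1) = \sum_{\ell=-r}^p \ell \, a_\ell = -\lambda\,a \ne 0$ (consistency \eqref{hyp:consistance}), which shows that $\kappa=1$ is a simple root of $F(\kappa)-1$, hence a simple root of $\tilde P_1$. To count the remaining eigenvalues, I would perturb: for $\epsilon > 0$ small, $1+\epsilon \in \U \subset \C \setminus \mathcal{C}$, so by the previous paragraph $\M(1+\epsilon)$ has $r$ eigenvalues in $\D$ and $p$ in $\U$. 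By holomorphy, the simple eigenvalue $\kappa(z)$ of $\M(z)$ with $\kappa(1) = 1$ satisfies $\kappa'(1) = 1/F'(1) = -1/(\lambda a) < 0$, so $\kappa(1+\epsilon) \in \D$ for small $\epsilon > 0$: thus among the $r$ stable eigenvalues of $\M(1+\epsilon)$, one converges to $1$ and the others cannot cross $\cercle$ as $\epsilon \to 0^+$ (by dissipativity again), so they stay in $\D$. Similarly the $p$ eigenvalues in $\U$ stay in $\U$ in the limit. This gives exactly $r-1$ eigenvalues in $\D$, $p$ in $\U$, and $1$ as a simple eigenvalue.

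The main obstacle is the continuity/count step for general $z \notin \mathcal{C}$: one has to be careful that $\C \setminus \mathcal{C}$ is path connected (this uses $\mathcal{C} \subset \Dbar$) and that the eigenvalues depend continuously on $z$ without any root ever touching $\cercle$ along the homotopy. The rest is essentially a bookkeeping of known facts (Rouch\'e at infinity, simplicity from the consistency derivative, perturbation from $z=1+\epsilon$).
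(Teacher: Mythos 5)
Your proposal is correct and follows essentially the same route as the paper: reduce to the scalar relation $F(\kappa)=z$, invoke Lemma~\ref{lem:1} to exclude eigenvalues on $\cercle$ for $z\notin\mathcal{C}$, use connectedness of $\mathcal{C}^c$ to make the $\D/\U$ split constant, count at large $|z|$, and treat $z=1$ by identifying $\kappa=1$ as the unique (simple) unit-modulus root and perturbing. The only cosmetic difference is that you carry out the count at large $|z|$ via a clean Rouch\'e comparison with $-z\kappa^r$, whereas the paper lets $z\to\infty$ and identifies the $r$ small roots asymptotically from $\kappa^r\approx a_{-r}/z$; both are standard realizations of the same idea.
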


\begin{proof}
We are first going to show that for $z \not \in \mathcal{C}$, $\M(z)$ has no eigenvalue on the unit circle $\cercle$ (this is a classical observation that 
dates back to \cite{kreiss1}). From the definition \eqref{defM}, we first observe that for any $z \in \C$, $\M(z)$ is invertible (its kernel is trivial since 
$r \ge 1$ and $a_{-r} \neq 0$ so the upper right coefficient of $\M(z)$ is nonzero). Therefore, for any $z \in \C$, the eigenvalues of $\M(z)$ are those 
$\kappa \neq 0$ such that:
\begin{equation}
\label{relation-valeurs-propres}
z \, = \, \sum_{\ell=-r}^p \, a_\ell \, \kappa^\ell \, . 
\end{equation}
In particular, Lemma \ref{lem:1} shows that for $z \not \in \mathcal{C}$, $\M(z)$ cannot have an eigenvalue $\kappa$ on the unit circle for otherwise 
the right hand side of \eqref{relation-valeurs-propres} would belong to $\mathcal{C}$.

Since $\mathcal{C}$ is closed and star-shaped, its complementary is pathwise-connected hence connected. Therefore, the number of eigenvalues of 
$\M(z)$ in $\D$ is independent of $z \not \in \mathcal{C}$ (same for the number of eigenvalues in $\U$). Following \cite{kreiss1} (see also \cite{jfcnotes} 
for the complete details), this number is computed by letting $z$ tend to infinity for in that case, the eigenvalues of $\M(z)$ in $\D$ tend to zero (the 
eigenvalues in $\D$ cannot remain uniformly away from the origin for otherwise the right hand side of \eqref{relation-valeurs-propres} would remain 
bounded while the left hand side tends to infinity).

The final argument is the following. For any $z \not \in \mathcal{C}$, the eigenvalues of $\M(z)$ are those $\kappa \neq 0$ such that:
$$
\kappa^r \, = \, \dfrac{1}{z} \, \sum_{\ell=-r}^p \, a_\ell \, \kappa^{r+\ell} \, ,
$$
which is just an equivalent way of writing \eqref{relation-valeurs-propres}. Hence for $z$ large, the small eigenvalues of $\M(z)$ behave at the leading 
order like the roots of the reduced equation:
$$
\kappa^r \, = \, \dfrac{a_{-r}}{z} \, ,
$$
and there are exactly $r$ distinct roots close to $0$ of that equation. Hence $\M(z)$ has $r$ eigenvalues in $\D$ for any $z \not \in \mathcal{C}$.
\bigskip

There remains to examine the spectral situation for $z=1$. Using \eqref{relation-valeurs-propres} again, the eigenvalues of $\M(1)$ are exactly the 
roots $\kappa \neq 0$ to the equation:
\begin{equation}
\label{valeurs-propres-1}
1 \, = \, \sum_{\ell=-r}^p \, a_\ell \, \kappa^\ell \, . 
\end{equation}
Thanks to Assumption \ref{hyp:1} (see \eqref{hyp:consistance} and \eqref{hyp:stabilite1}), the only root of \eqref{valeurs-propres-1} on the unit circle 
is $\kappa=1$ and it is a simple root. This simple eigenvalue can therefore be extended holomorphically with respect to $z$ as a simple eigenvalue 
of $\M(z)$ for $z$ in a neighborhood of $1$. Differentiating \eqref{relation-valeurs-propres} with respect to $z$, we obtain the Taylor expansion:
$$
\kappa (z) \, = \, 1 \, - \, \dfrac{1}{\lambda \, a} \, (z \, - \, 1) \, + \, O ((z \, - \, 1)^2) \, ,
$$
so we necessarily have $\kappa (z) \in \D$ for $z \not \in \mathcal{C}$ close to $1$. This means that the eigenvalues of $\M(1)$ that are different 
from $1$ split as follows: $r-1$ of them belong to $\D$ and $p$ belong to $\U$ (for otherwise the spectral splitting between $\D$ and $\U$ for $z 
\not \in \mathcal{C}$ would not persist for $z$ close to $1$. The proof of Lemma \ref{lem:2} is now complete.
\end{proof}

\bibliographystyle{alpha}
\bibliography{CF}

\begin{thebibliography}{GKO95}

\bibitem[Aud11]{audiard}
C.~Audiard.
\newblock On mixed initial-boundary value problems for systems that are not
  strictly hyperbolic.
\newblock {\em Appl. Math. Lett.}, 24(5):757--761, 2011.

\bibitem[BGS07]{benzoni-serre}
S.~Benzoni-Gavage and D.~Serre.
\newblock {\em Multidimensional hyperbolic partial differential equations}.
\newblock Oxford University Press, 2007.
\newblock First-order systems and applications.

\bibitem[CF20]{CF1}
J.-F. Coulombel and G.~Faye.
\newblock Generalized gaussian bounds for discrete convolution operators.
\newblock 2020.

\bibitem[CG11]{jfcag}
J.-F. Coulombel and A.~Gloria.
\newblock Semigroup stability of finite difference schemes for multidimensional
  hyperbolic initial boundary value problems.
\newblock {\em Math. Comp.}, 80(273):165--203, 2011.

\bibitem[Con90]{Conway}
J.~B. Conway.
\newblock {\em A course in functional analysis}.
\newblock Graduate Texts in Mathematics. Springer-Verlag, 1990.

\bibitem[Cou13]{jfcnotes}
J.-F. Coulombel.
\newblock Stability of finite difference schemes for hyperbolic initial
  boundary value problems.
\newblock In {\em HCDTE Lecture Notes. Part I. Nonlinear Hyperbolic PDEs,
  Dispersive and Transport Equations}, pages 97--225. American Institute of
  Mathematical Sciences, 2013.

\bibitem[Cou15]{jfcX}
J.-F. Coulombel.
\newblock The {L}eray-{G}\aa rding method for finite difference schemes.
\newblock {\em J. \'Ec. polytech. Math.}, 2:297--331, 2015.

\bibitem[Des08]{Despres}
B.~Despr\'{e}s.
\newblock Finite volume transport schemes.
\newblock {\em Numer. Math.}, 108(4):529--556, 2008.

\bibitem[DSC14]{Diaconis-SaloffCoste}
P.~Diaconis and L.~Saloff-Coste.
\newblock Convolution powers of complex functions on {$\Bbb Z$}.
\newblock {\em Math. Nachr.}, 287(10):1106--1130, 2014.

\bibitem[Dur64]{Duren}
P.~L. Duren.
\newblock On the spectrum of a {T}oeplitz operator.
\newblock {\em Pacific J. Math.}, 14:21--29, 1964.

\bibitem[GKO95]{gko}
B.~Gustafsson, H.-O. Kreiss, and J.~Oliger.
\newblock {\em Time dependent problems and difference methods}.
\newblock John Wiley \& Sons, 1995.

\bibitem[GKS72]{gks}
B.~Gustafsson, H.-O. Kreiss, and A.~Sundstr{\"o}m.
\newblock Stability theory of difference approximations for mixed initial
  boundary value problems. {II}.
\newblock {\em Math. Comp.}, 26(119):649--686, 1972.

\bibitem[God03]{godillon}
P.~Godillon.
\newblock Green's function pointwise estimates for the modified
  {L}ax-{F}riedrichs scheme.
\newblock {\em M2AN Math. Model. Numer. Anal.}, 37(1):1--39, 2003.

\bibitem[Kre68]{kreiss1}
H.-O. Kreiss.
\newblock Stability theory for difference approximations of mixed initial
  boundary value problems. {I}.
\newblock {\em Math. Comp.}, 22:703--714, 1968.

\bibitem[KW93]{kreiss-wu}
H.-O. Kreiss and L.~Wu.
\newblock On the stability definition of difference approximations for the
  initial-boundary value problem.
\newblock {\em Appl. Numer. Math.}, 12(1-3):213--227, 1993.

\bibitem[M{\'e}t17]{metivier2}
G.~M{\'e}tivier.
\newblock On the {$L^2$} well posedness of hyperbolic initial boundary value
  problems.
\newblock {\em Ann. Inst. Fourier (Grenoble)}, 67(5):1809--1863, 2017.

\bibitem[Osh69a]{osher2}
S.~Osher.
\newblock Stability of difference approximations of dissipative type for mixed
  initial boundary value problems. {I}.
\newblock {\em Math. Comp.}, 23:335--340, 1969.

\bibitem[Osh69b]{osher1}
S.~Osher.
\newblock Systems of difference equations with general homogeneous boundary
  conditions.
\newblock {\em Trans. Amer. Math. Soc.}, 137:177--201, 1969.

\bibitem[Rau72]{rauch}
J.~Rauch.
\newblock {${\mathcal L}^2$} is a continuable initial condition for {K}reiss'
  mixed problems.
\newblock {\em Comm. Pure Appl. Math.}, 25:265--285, 1972.

\bibitem[SW97]{SW}
J.~C. Strikwerda and B.~A. Wade.
\newblock A survey of the {K}reiss matrix theorem for power bounded families of
  matrices and its extensions.
\newblock In {\em Linear operators ({W}arsaw, 1994)}, pages 339--360. Polish
  Acad. Sci., 1997.

\bibitem[TE05]{TE}
L.~N. Trefethen and M.~Embree.
\newblock {\em Spectra and pseudospectra}.
\newblock Princeton University Press, 2005.
\newblock The behavior of nonnormal matrices and operators.

\bibitem[Tho65]{Thomee}
V.~Thom\'{e}e.
\newblock Stability of difference schemes in the maximum-norm.
\newblock {\em J. Differential Equations}, 1:273--292, 1965.

\bibitem[Tre84]{trefethen3}
L.~N. Trefethen.
\newblock Instability of difference models for hyperbolic initial boundary
  value problems.
\newblock {\em Comm. Pure Appl. Math.}, 37:329--367, 1984.

\bibitem[Wu95]{wu}
L.~Wu.
\newblock The semigroup stability of the difference approximations for
  initial-boundary value problems.
\newblock {\em Math. Comp.}, 64(209):71--88, 1995.

\bibitem[ZH98]{ZH98}
K.~Zumbrun and P.~Howard.
\newblock Pointwise semigroup methods and stability of viscous shock waves.
\newblock {\em Indiana Univ. Math. J.}, 47(3):741--871, 1998.

\end{thebibliography}
\end{document}